\numberwithin{equation}{section}
\numberwithin{lemma}{section}
\numberwithin{theorem}{section}
\numberwithin{figure}{section}
\numberwithin{definition}{section}
\numberwithin{remark}{section}
\numberwithin{table}{section}
\spnewtheorem{algorithm}{Algorithm}{\bf}{\rm}
\newcommand{\pa}{\partial}
\newcommand{\la}{\langle}
\newcommand{\ra}{\rangle}
\newcommand{\al}{\alpha}
\newcommand{\si}{\sigma}
\newcommand{\na}{\nabla}
\newcommand{\Ga}{\Gamma}
\newcommand{\Om}{\Omega}
\newcommand{\de}{\delta}
\newcommand{\De}{\Delta}
\newcommand{\Lam}{\Lambda}
\newcommand{\lam}{\lambda}
\newcommand{\lj}{[{\hskip -1.5pt} [}
\newcommand{\rj}{]{\hskip -1.5pt} ]}
\newcommand{\bks}{\backslash}
\newcommand{\diam}{\mathrm{diam}}
\newcommand{\dist}{\mathrm{dist}}
\newcommand{\cT}{{\cal T}}
\newcommand{\cM}{{\cal M}}
\newcommand{\cE}{{\cal E}}
\newcommand{\cN}{{\cal N}}
\renewcommand{\i}{\mathbf{i}}
\renewcommand{\div}{\mathrm{div}}
\newcommand{\D}{\mathcal{D}}
\newcommand{\R}{\mathbb{R}}
\newcommand{\V}{\mathbb{V}}
\newcommand{\X}{\mathbb{X}}
\newcommand{\eps}{\epsilon}
\newcommand{\dhat}[1]{\widehat{\vphantom{\rule{3pt}{6pt}}\smash{\widehat{#1}}}}
\newcommand{\be}{\begin{eqnarray}}
\newcommand{\ee}{\end{eqnarray}}
\newcommand{\ben}{\begin{eqnarray*}}
\newcommand{\een}{\end{eqnarray*}}
\newcommand{\nn}{\nonumber}
\begin{document}

\title{An adaptive high-order unfitted finite element method
for elliptic interface problems
\thanks{This work is supported in part by China National Key Technologies R\&D Program under the grant 2019YFA0709602 and China Natural Science Foundation under the grant 118311061.}}


\author{Zhiming Chen \and Ke Li \and Xueshuang Xiang 
}


\institute{Zhiming Chen \at
              LSEC, Academy of
		Mathematics and Systems Science, Chinese Academy of Sciences, Beijing 100190, China
		and School of Mathematical Science, University of
		Chinese Academy of Sciences, Beijing 100049, China \\
              \email{zmchen@lsec.cc.ac.cn}           
           \and
           Ke Li \at
              School of Mathematical Science, University of Chinese Academy of Sciences, Beijing 100049, China, Current address:    
                  Department of Basic, Information Engineering University, Zhengzhou 450001, China \\
		 \email{like@lsec.cc.ac.cn}
    \and
    Xueshaung Xiang \at
    Qian Xuesen Laboratory of Space Technology,
    China Academy of Space Technology, Beijing 100194, China \\
    \email{xiangxueshuang@qxslab.cn}
}

\date{Received: date / Accepted: date}

\titlerunning{An adaptive unfitted FEM for elliptic interface problems}

\maketitle

\begin{abstract}
We design an adaptive unfitted finite element method on the Cartesian mesh with hanging nodes. We derive an $hp$-reliable and efficient residual type a posteriori error estimate on $K$-meshes.
A key ingredient is a novel $hp$-domain inverse estimate which allows us to prove the stability of the
finite element method under practical interface resolving mesh conditions and also prove the lower bound of the $hp$ a posteriori error estimate. Numerical examples are included.

\subclass{65N30}
\end{abstract}

\section{Introduction}\label{sec:1}

We consider the following model elliptic interface problem
 \be
 &&-\mbox{div}(a\na u)=f\ \ \mbox{in }\Om,\label{p1}\\
 &&\lj u \rj_\Ga=0,\ \ \lj a\na u\cdot \nu \rj_\Ga=0\ \ \mbox{on }\Ga,\label{p2}\\
 && u=g\ \ \mbox{on }\pa\Om,\label{p3}
 \ee
where $\Omega\subset\R^2$ is a bounded Lipschitz domain, $f\in L^2(\Om)$, $g\in H^{1/2}(\pa\Om)$, $\Gamma$ is a Lipschitz and piecewise
$C^2$-smooth interface which divides $\Om$ into two nonintersecting subdomains
 \ben
 \Om_1\subset\bar\Om_1\subset\Omega,\quad
 \Om_2=\Om\backslash\bar\Om_1,\quad\Gamma =\partial\Om_1\cap\pa\Om_2.
 \een
For simplicity, we assume that the coefficient $a(x)$ is positive and piecewise constant, namely,
 \ben
 a = a_1\chi_{\Omega_1} +a_2\chi_{\Omega_2},\ \ a_1,a_2>0,
 \een
where $\chi_{\Om_i}$ denotes the characteristic function of $\Om_i$,
$i=1,2$. Here $\nu$ is the unit outer normal to $\Om_{1}$, and $\lj
v\rj_\Ga:=v|_{\Om_1}-v|_{\Om_2}$ stands for the jump of a function $v$ across the
interface $\Ga$. In this paper we will assume $\Om$ is a union of bounded rectangles so that it can be partitioned by Cartesian meshes. For general Lipschitz domains we can extend the ideas developed in this paper in the framework of fictitious domain finite element methods, which will be studied in a future work.

There are extensive studies in the literature for immersed or unfitted mesh
methods which allow the interface intersecting elements in an arbitrary manner and
thus are able to avoid expensive work in the mesh generation when using body-fitted methods \cite{Babuska70, Chen98, Xu}. For low order approximations, we refer to
the immersed boundary method \cite{Peskin}, the immersed
interface method \cite{Li06}, the ghost fluid method
\cite{Liu}, the immersed finite element method \cite{Li03, Chen09}, and the extended Nitsche's method or
the cut finite element method \cite{Hansbo, Burman10}. The seminal idea of ``doubling of unknowns" in the interface element in \cite{Hansbo} has motivated studies of unfitted high order $h$-methods in \cite{Johansson, Wang, Huang, Burman18} and $hp$-methods in \cite{Massjung, Wu}. We also refer to \cite{Lehrenfeld} for the unfitted isoparametric finite element method and the recent review paper \cite{Burman15} for further references on the theory and application of unfitted finite element methods. We remark that a crucial ingredient in the design and analysis of unfitted high order finite element methods is the inverse trace inequality on curved domains for which various interface resolving mesh conditions are
introduced.

A posteriori error estimates are computable quantities in terms of the discrete solution and the input data, which provide the estimation of the discrete error and are decisive in designing efficient adaptive methods \cite{Babuska87a}. There exists an extensive literature on $hp$-residual type a posteriori finite element error estimates, see \cite{Melenk01, Melenk05} for conforming finite element methods and \cite{Houston} for discontinuous Galerkin methods. The recent work \cite{Ern} proves that the equilibrated flux a posteriori error estimate on conforming meshes is also polynomial degree robust. The convergence and quasi-optimality of $h$-adaptive methods based on a posteriori error estimates for discontinuous Galerkin methods have been studied in \cite{Karakashian}, \cite{Bonito} and the references therein. For the reliable and efficient residual type a posteriori error estimation for other unfitted finite element methods we refer to the recent work \cite{He} for immersed finite element methods and \cite{Burman20} for the cut finite element method.

The purpose of this paper is two folds. We first introduce the concept of interface deviation and prove the domain inverse estimate, which allows us to show the $hp$-stability of an unfitted finite element method under new interface resolving mesh conditions that can be easily implemented in practical computations. The unfitted finite element method is based on the idea of doubling of unknowns in \cite{Hansbo} and the idea of merging small elements with neighboring large elements in \cite{Johansson} in the framework of the local discontinuous Galerkin (LDG) method \cite{Cockburn}. Secondly, we derive a residual type
$hp$-a posteriori error estimate for the unfitted finite element method on the so called $K$-meshes with possible hanging nodes \cite{Babuska87a}. Here we extend the $hp$-quasi-interpolation operator in \cite{Melenk05} and the $hp$-local smoothing operator in \cite{Houston, Zhu} to $K$-meshes. We also show the $hp$ approximation error of unfitted finite element functions by $H^1$ functions by using the $H^{1/2}$-norm localization lemma in \cite{Faermann}. The local lower bound of our a posteriori error estimate is established by using the domain inverse estimate. This argument is different from the classical argument in \cite{Melenk01} to derive the lower bound and the result is slightly better (see the remark below Theorem \ref{thm:4.1}). We remark that for simplicity, a uniform polynomial degree is used in this paper, but the change to a variable polynomial degree over the mesh can also be considered by the method in this paper.

The paper is organized as follows. In section 2 we introduce the unfitted finite element method and prove the domain inverse estimate. In section 3 we show the upper bound of the residual type a posteriori error estimate. In section 4 we prove the efficiency of our a posteriori error estimator. In section 5 we report several numerical examples to show the effectiveness of our adaptive unfitted finite element method.

\section{The unfitted finite element method}\label{sec:2}

We first introduce the notation and the unfitted finite element method in the first subsection. Then we prove the domain inverse estimate which plays a key role in this paper. In the third subsection we prove
 the stability of our finite element method.

\subsection{Notation and the finite element method}

Let $\cT$ be a Cartesian finite element mesh with possible local refinements and hanging nodes. The elements of the mesh are (open) rectangles whose sides are parallel to the coordinate axes. For any $K\in\cT$, let $h_K$ stand for its diameter. Denote
$\cT^\Ga=\{K\in\cT:K\cap\Ga\not=\emptyset\}$ the set of interface elements. We assume
the interface $\Ga$ intersects each element $K\in\cT^\Ga$ at most twice at different (open) sides and each element $K\in\cT^{\Ga}$ includes at most one singular point of $\Ga$ where $\Ga$ is not $C^2$-smooth.

\begin{figure}[t]
	\centering
	\includegraphics[width=0.9\textwidth]{./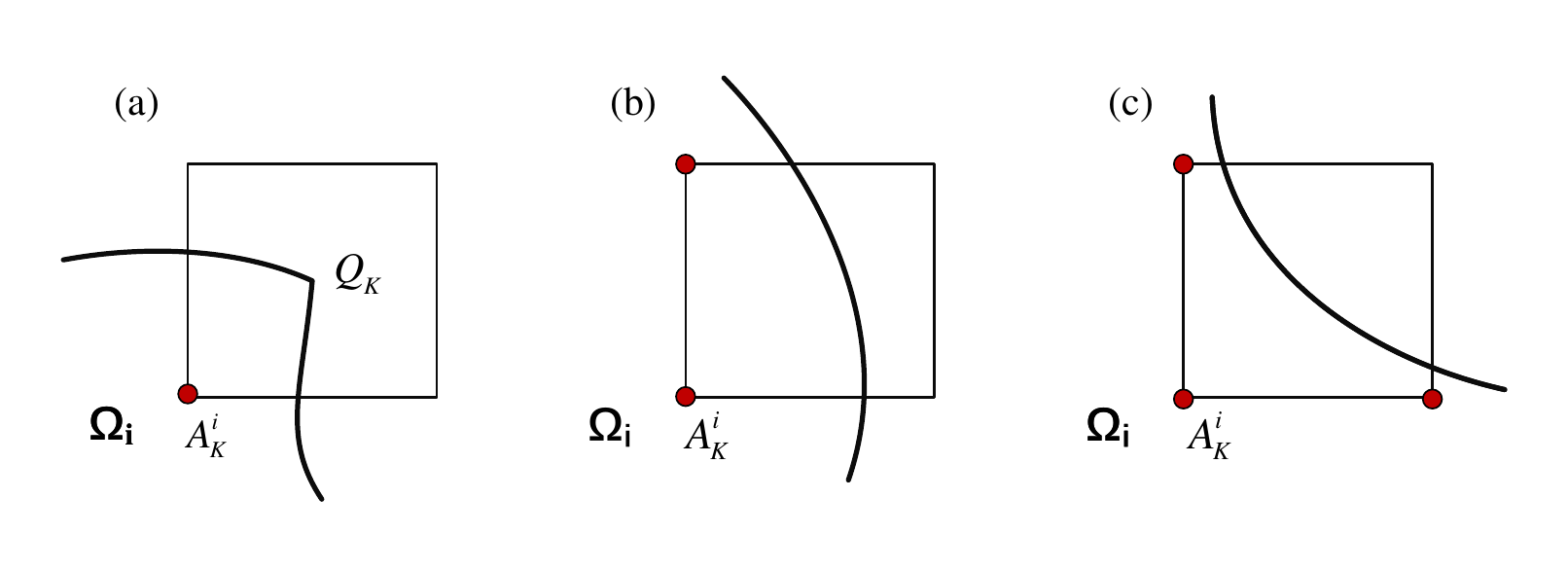}
	\caption{Examples of a large element $K$ with respect to $\Om_i$ with (a) one, (b) two, and (c) three vertices in $\Om_i$. The element in (a) is an irregular large element with respect to $\Om_i$.}
	\label{fig:2.1}
\end{figure}

\begin{definition}\label{def:2.1} (Large element)
For $i=1,2$, an element $K\in\cT$ is called a large element with respect to $\Om_i$ if $K\subset\Om_i$
or $K\in\cT^\Ga$ for which there exists a constant $\de_0\in (0,1/2)$ such that $|e\cap\Om_i|\ge\de_0|e|$ for each side $e$ of $K$ having nonempty intersection with $\Om_i$ and, if $K$ has only one vertex $A_K^i$ in $\Om_i$ and includes a singular point $Q_K$ of $\Ga$, ${\rm dist}(Q_K,e_j)\ge\frac 12\de_0\min(|e_1|, |e_2|)$, where $e_j$ is the side of $K$ having $A_K^i$ as one of its end points and ${\rm dist}(Q_K,e_j)$ is the distance of $Q_K$ to the side $e_j$, $j=1,2$, see Figure \ref{fig:2.1}. 
\end{definition}

The large elements with respect to $\Om_i$ which have only one vertex in $\Om_i$ and include a singular point of $\Ga$ will be called {\it irregular} large elements with respect to $\Om_i$. The other kinds of large elements with respect to $\Om_i$ will be called {\it regular} large elements with respect to $\Om_i$, $i=1,2$. We notice that if $K$ is an irregular large element, then the triangle with vertices $A_K^i$, $Q_K$, and one of the intersection points of $\Ga\cap\pa K$ is shape regular with the ratio of the radius of the maximal inscribed circle to the diameter of the triangle depending on $\de_0$.

One difficulty in the study of unfitted finite element methods is the possibility that $K$ may not be large with respect to both $\Om_1$ and $\Om_2$. We make the following assumption on the finite element mesh which is inspired by Johansson and Larson \cite{Johansson} in which a fictitious boundary discontinuous Galerkin method for elliptic equations is developed.

\begin{figure}[t]
	\centering
	\includegraphics[width=0.95\textwidth]{./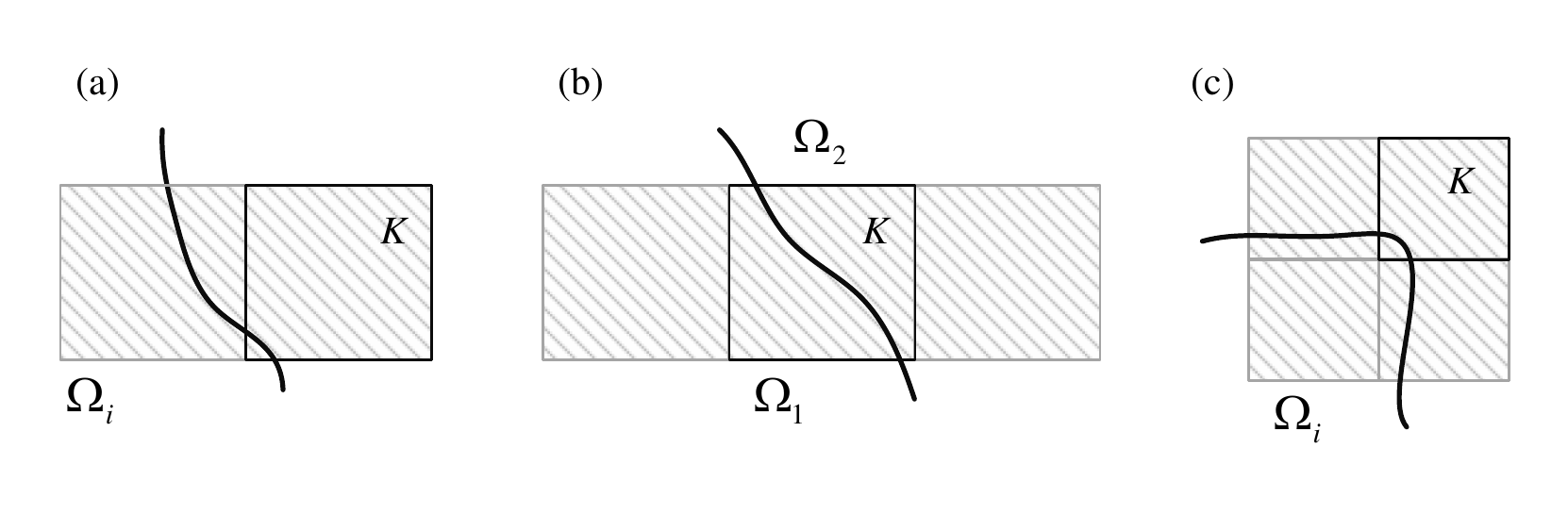}
	\caption{The small element $K$ and its macro-element $N(K)$ (shadow region).}
	\label{fig:2.2}
\end{figure}

\noindent\\
{\bf Assumption (H1)}: For each $K\in\cT^\Ga$, there exists a rectangular macro-element $N(K)$ which is a union of $K$ and its neighboring element (or elements) such that $N(K)$ is large with respect to both $\Om_1$ and $\Om_2$, see Figure \ref{fig:2.2}. We assume $h_{N(K)}\le C_0h_K$ for some fixed constant $C_0$. \\

One way to satisfy the assumption (H1) is to locally refine the neighboring elements $K'$ of $K\in\cT^\Ga$ which is not large with respect to both $\Om_1,\Om_2$ so that the elements $K'$ are of the same size as $K$ and $K'$
are completely included in $\Om_1$ or $\Om_2$. In this case, we can define $N(K)$ as the union of $K$
and those neighboring elements $K'$ (see Figure \ref{fig:2.2}).

In the following, we will always set $N(K)=K$ if $K\in\cT^\Ga$ and $K$ is large with respect to both $\Om_1,\Om_2$. Thus $\widetilde\cT=\{N(K):K\in\cT^\Ga\}\cup\{K\in\cT: K\subset\Om_i, i=1,2, K\not\subset N(K') \mbox{ for some $K'\in\cT^\Ga$}\}$ is also a Cartesian mesh of $\Om$. The
elements in $\widetilde\cT$ are large with respect to both domains $\Om_1,\Om_2$ and the interface intersects
the boundary of each element $K\in\widetilde\cT$ also twice at different sides. We will call $\widetilde\cT$ the induced mesh of $\cT$ and write $\widetilde\cT={\rm Induced }\,(\cT)$.

For any rectangular element $K$, $K\cap\Ga\not=\emptyset$, we denote $\Ga_K=\Ga\cap K$ and $\Ga_K^h$ the (open) straight segment
connecting the two intersection points of $\Ga$ and $\pa K$. If $K$ includes a singular point $Q_K$,
then $\Ga_K$ is the union of two $C^2$-smooth curves $\Ga_{1K}\cup\Ga_{2K}$. We denote $\Ga_{jK}^h$ the (open) straight segment
connecting $Q_K$ and the intersecting point of $\Ga_{jK}\cap\pa K$, $j=1,2$. 

The concept of interface deviation which measures how far $\Ga_K$ deviates from $\Ga_K^h$ or $\Ga_{1K}^h, \Ga_{2K}^h$ plays an important role in our subsequent analysis.

\begin{definition}\label{def:2.2}
For any rectangular element $K$, $K\cap\Ga\not=\emptyset$, the interface deviation $\eta_K$ is defined as $\eta_K=\max(\eta_K^1,\eta_K^2)$, where for $i=1,2$, if $K$ is a regular large element with respect to $\Om_i$ with $A^i_K\in\Om_i$ being the vertex of $K$ which has the maximum distance to $\Ga_K^h$, 
\ben
\eta_K^i=\frac{\dist_{\rm H}(\Ga_K,\Ga_K^h)}{\dist(A^i_K,\Ga_K^h)}\,,
\een
and if $K$ is an irregular large element with respect to $\Om_i$ with vertex $A_K^i\in\Om_i$, 
\ben
\eta_K^i=\max\left(\frac{\dist_{\rm H}(\Ga_{1K},\Ga_{1K}^h)}{\dist(A^i_K,\Ga_{1K}^h)},\frac{\dist_{\rm H}(\Ga_{2K},\Ga_{2K}^h)}{\dist(A^i_K,\Ga_{2K}^h)}\right).
\een
Here $\dist_{\rm H}(\Ga_1,\Ga_2)=\max_{x\in \Ga_1}(\min_{y\in \Ga_2}|x-y|)$ is the Hausdorff distance between two sets $\Ga_1,\Ga_2$ and $\dist(A,\Ga_1)$ is the distance of a point $A$ to the set $\Ga_1$.
\end{definition}

\begin{lemma}\label{lem:new}
Let $K\in\widetilde\cT^\Ga$ which is large with respect to both $\Om_1$, $\Om_2$ and $N(K)$ be the macro-element which is the union of $K$ and its two or three neighboring elements included in $\Om_i$ depending on $K$ having two or three vertices in $\Om_i$, $i=1,2$, see Figure \ref{fig:new}. The neighboring elements are assumed to be of the same size as $K$. Then $\eta_{N(K)}^i\le \max(1/2,(1-\de_0)/(1+\de_0))$.
\end{lemma}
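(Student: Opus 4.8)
This is a planar geometry computation once coordinates are fixed, and the plan is to treat the two configurations in the hypothesis separately. The first observation is that the added neighbours all lie in $\Om_i$, so $\Ga$ does not enter them, and the two points where $\Ga$ meets $\pa K$ (which lie in the interiors of two sides of $K$) still lie on $\pa N(K)$; hence $\Ga_{N(K)}=\Ga_K$ and $\Ga_{N(K)}^h=\Ga_K^h$. Thus, passing from $K$ to $N(K)$, neither the interface arc $\Ga_K$ nor its chord $\Ga_K^h$ changes: only the reference vertex in the definition of $\eta^i$ changes, from a vertex of $K$ to the vertex $A_{N(K)}^i$ of the larger rectangle $N(K)$ that lies in $\Om_i$ and is farthest from $\Ga_K^h$. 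Since $N(K)\supset K$ has two or three vertices in $\Om_i$, it is a \emph{regular} large element with respect to $\Om_i$ (a possible singular point of $\Ga$ in $K$ is irrelevant for the $\Om_i$-side estimate, as $\Ga_K^h$ remains the single chord joining the two boundary intersection points), so $\eta_{N(K)}^i=\dist_{\rm H}(\Ga_K,\Ga_K^h)/\dist(A_{N(K)}^i,\Ga_K^h)$.

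Normalise $K=(0,1)^2$. In the three-vertex case $\Ga$ meets two adjacent sides; put the unique $\Om_j$-vertex of $K$ at the origin and write the intersection points as $(\al,0)$ and $(0,\beta)$. Because $K$ is a large element with respect to both $\Om_1$ and $\Om_2$, each of these two sides has at least a $\de_0$-fraction in $\Om_j$ and at least a $\de_0$-fraction in $\Om_i$, whence $\al,\beta\in[\de_0,1-\de_0]$; here $N(K)=(0,2)^2$, whose farthest $\Om_i$-vertex is $(2,2)$. In the two-vertex case $\Ga$ meets two opposite sides, say the left and right sides of $K$ at $(0,\beta_1)$ and $(1,\beta_2)$ with $\beta_1,\beta_2\in[\de_0,1-\de_0]$ by the same reasoning, and $N(K)$ is the $1\times 3$ vertical strip obtained by stacking the two $\Om_i$-neighbours above $K$, whose farthest $\Om_i$-vertex is at height at least $2+\de_0$ above $\Ga_K^h$ in the relevant direction.

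For the numerator, since $\Ga_K\subset\bar K$ the quantity $\max_{x\in\Ga_K}\dist(x,\Ga_K^h)$ is at most the maximal distance of a point of $\bar K$ to the line $L$ through $\Ga_K^h$; the reverse Hausdorff direction $\max_{y\in\Ga_K^h}\dist(y,\Ga_K)$ is at most $\tfrac12|\Ga_K^h|$ because $\Ga_K$ and $\Ga_K^h$ have common endpoints, and one checks $\tfrac12|\Ga_K^h|$ does not exceed the first bound when $\de_0<\tfrac12$. As distance to $L$ is linear, its maximum over the box $\bar K$ is attained at a vertex of $K$, and the constraints $\al,\beta\in[\de_0,1-\de_0]$ (resp. $\beta_1,\beta_2\in[\de_0,1-\de_0]$) identify which vertex. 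Dividing by $\dist(A_{N(K)}^i,L)$ then gives, in the three-vertex case, the ratio $\le(\al+\beta-\al\beta)/(2\al+2\beta-\al\beta)\le 1/2$, and in the two-vertex case a ratio $\le(1-\de_0)/(2+\de_0)\le(1-\de_0)/(1+\de_0)$. Taking the larger of the two bounds yields $\eta_{N(K)}^i\le\max(1/2,(1-\de_0)/(1+\de_0))$.

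I expect the only real difficulty to be bookkeeping: verifying that the reverse Hausdorff direction is indeed dominated, that the farthest vertex of $\bar K$ is controlled by $\de_0$ rather than by a vertex of $N(K)$ lying in $\Om_j$, and that the attachment of neighbours truly leaves $\Ga_K$ and its chord untouched. With those three points pinned down, the bound in each configuration is a one-line computation.
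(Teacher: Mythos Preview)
Your overall strategy matches the paper's: since the added neighbours lie in $\Om_i$, the arc $\Ga_K$ and its chord $\Ga_K^h$ are unchanged, only the reference vertex $A^i$ moves outward; you then bound the Hausdorff numerator by a ``far-vertex'' distance and compute the resulting ratio. The paper does the same thing via a similar-triangles construction (extending the line through $C,D$ to meet the sides through $A_K^i$ and $A_{N(K)}^i$) rather than explicit coordinates, but the computed ratio $(\al+\beta-\al\beta)/(2\al+2\beta-\al\beta)$ in the three-vertex case is exactly the paper's $(|B_KB_K'|+h_1)/(|B_{N(K)}B_{N(K)}'|+2h_1)$.

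There is one genuine gap in your numerator bound. You claim $\max_{x\in\Ga_K}\dist(x,\Ga_K^h)\le\max_{x\in\bar K}\dist(x,L)$, but $\dist(x,\Ga_K^h)$ is distance to the \emph{segment}, which can strictly exceed $\dist(x,L)$ when the perpendicular foot falls outside $\Ga_K^h$. For instance with $\al=\tfrac12$, $\beta=\de_0$ small, the foot from $(1,1)$ lies outside the chord, so $\dist((1,1),\Ga_K^h)=\sqrt5/2>1\approx\dist((1,1),L)$; if $\Ga_K$ swings close to $(1,1)$ your numerator bound fails. The paper's proof shares this imprecision (it too passes from the segment distance $\dist(A_K^i,\Ga_K^h)$ to the perpendicular ratio without comment). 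The conclusion is still correct: one checks separately, when the foot from $A_K^i$ lies on $\Ga_K^h$, and when both $A_K^i$ and $A_{N(K)}^i$ are nearest to the same endpoint, that $\dist(A_K^i,\Ga_K^h)\le\tfrac12\dist(A_{N(K)}^i,\Ga_K^h)$, but this case split is missing from both arguments.

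Your two-vertex configuration also misreads the setup. When $\Ga$ cuts two opposite sides of $K$, only \emph{one} side-neighbour of $K$ lies entirely in $\Om_i$; the ``two $\Om_i$-neighbours above $K$'' forming a $1\times 3$ strip are not both neighbours of $K$. The paper's proof is consistent with a single neighbour (so $N(K)$ is $2h_1\times h_2$), giving $\dist(A_{N(K)}^i,\Ga_K^h)\ge(1+\de_0)h_1$ and hence exactly the bound $(1-\de_0)/(1+\de_0)$. Your tighter $(1-\de_0)/(2+\de_0)$ comes from the larger strip; it is not wrong, just not the $N(K)$ the lemma intends.
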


\begin{proof} We first prove the case when $K$ has three vertices in $\Om_i$. Let $\Ga_K^h$ be the segment $CD$, $A_K^i\in\Om_i$ be the vertex of $K$ having the maximal distance to $CD$, and $A_{N(K)}^i\in\Om_i$ be the vertex of $N(K)$ having the maximal distance to $CD$. We 
extend $DC$ to intersect the extended segment $A_K^iB_K$ at $B_K'$ and $A^i_{N(K)}B_{N(K)}$ at $B'_{N(K)}$, see 
Figure \ref{fig:new}(b). Denote $h_j$ the length of the side of $K$ parallel to the $j$th coordinate axis, $j=1,2$. By elementary 
geometry, $\frac{|B_{N(K)}B'_{N(K)}|}{|B_KB'_K|}=\frac{|B_KC|+h_2}{|B_KC|}\ge 2$.
Thus, since $\dist_{\rm H}(\Ga_K,\Ga_K^h)\le\dist(A_K^i,\Ga_K^h)$, 
\ben
\eta^i_{N(K)}\le\frac{|B_KB_K'|+h_1}{|B_{N(K)}B_{N(K)}'|+2h_1}\le\frac{|B_KB_K'|+h_1}{2|B_KB_K'|+2h_1}=\frac 12.
\een
When $K$ has two vertices in $\Om_i$, we use the notation in Figure \ref{fig:new}(a). Since $K$ is large with respect to both $\Om_1$, $\Om_2$, we have $\de_0h_1\le |A_K^iC|\le (1-\de_0)h_1, \de_0h_1\le |B_K^iD|\le (1-\de_0)h_1$. Thus it follows from $\dist_{\rm H}(\Ga_K,\Ga_K^h)\le\max(\dist(A_K^i,\Ga_K^h),\dist(B_K^i,\Ga_K^h))$ that
\ben
\eta^i_{N(K)}\le\max\left(\frac{|A_K^iC|}{|A_{N(K)}^iC|},\frac{|B_K^iD|}{|A_{N(K)}^iC|}\right)\le\frac{(1-\de_0)h_1}{h_1+\de_0h_1}=\frac{1-\de_0}{1+\de_0}.
\een
This completes the proof. $\Box$
\end{proof}

\begin{figure}[t]
	\centering
	\includegraphics[width=0.95\textwidth]{./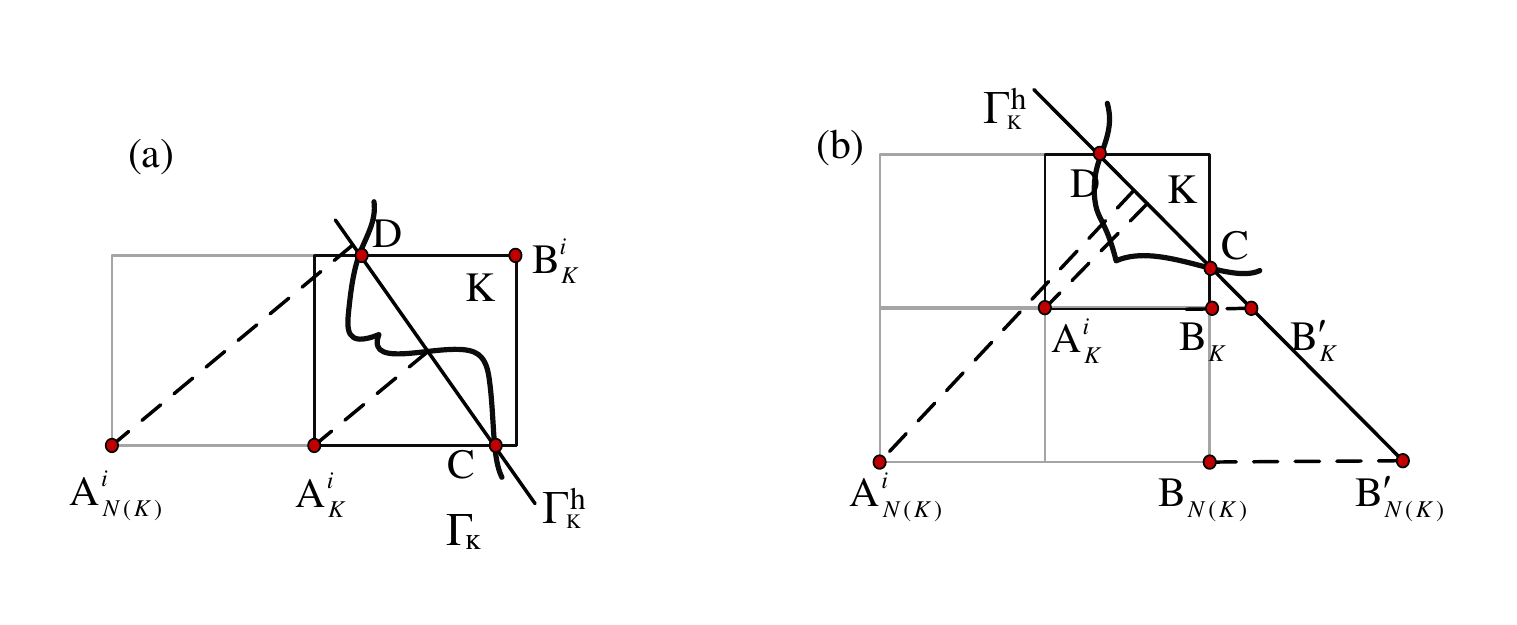}
	\caption{The element $K$ and its macro-element $N(K)$ when $K$ includes a singular point of $\Ga$. (a) $K$ has two vertices in $\Om_i$. (b) $K$ has three vertices in $\Om_i$.}
	\label{fig:new}
\end{figure}

We make the following assumption which can be viewed as a variant of interface resolving mesh conditions.

\noindent\\
{\bf Assumption (H2)}: For any $K\in\widetilde\cT^\Ga$, there exists a rectangular macro-element $N(K)$ which is a union of $K$ and its neighboring element (or elements) such that $\eta_{N(K)}\le \max(1/2,(1-\de_0)/(1+\de_0))$. \\

If $\Ga_K$ is $C^2$-smooth in $K$, it is easy to see that $\dist_{\rm H}(\Ga_K,\Ga_K^h)\le Ch_K^2$ (see, e.g., Feistauer \cite[\S 3.3.2]{Feistauer}) and thus $\eta_K\le Ch_K$ for some constant $C$ independent of $h_K$. When $K$ is an irregular large element with respect to $\Om_i$, we still have $\dist_{\rm H}(\Ga_{jK},\Ga_{jK}^h)\le Ch_K^2$, $j=1,2$, and thus $\eta_K^i\le Ch_K$. Therefore, in these cases, Assumption (H2) can be satisfied with $N(K)=K$ if $h_K$ is sufficiently small. When $K\in\widetilde\cT^\Ga$ includes a singular point of $\Ga$ and has two or three vertices in $\Om_i$, by Lemma \ref{lem:new},  if $h_K$ is sufficiently small, we may merge $K$ with its neighboring elements in $\Om_i$ to obtain a macro-element $N(K)$ so that $\eta_{N(K)}^i\le \max(1/2,(1-\de_0)/(1+\de_0))$. Therefore, when the interface elements are sufficiently refined, Assumption (H2) can always be satisfied.  

In the following, we denote $\cM$ the induced mesh from $\widetilde\cT$ by possibly merging elements in $\widetilde\cT^\Ga$ with their neighboring elements such that 
\be\label{HH}
\eta_K\le \max(1/2,(1-\de_0)/(1+\de_0))\ \ \forall K\in\cM.
\ee
Obviously, each element in $\cM$ is large with respect to both $\Om_1,\Om_2$. 

Now we introduce the finite element space using the idea of ``doubling of unknowns" in Hansbo and Hansbo \cite{Hansbo}. For any integer $p\ge 1$ and $K\in\cM$, denote $Q_p(K)$ the set of all polynomials in $K$ which is of order $p$ in each variable. We define the unfitted finite element space as
\ben
\mathbb{X}_p(\cM)=\{v_1\chi_{\Om_1}+v_2\chi_{\Om_2}:v_i|_K\in Q_p(K), i=1,2\}.
\een
We also define the broken Sobolev space
\ben
H^1(\cM)=\{v_1\chi_{\Om_1}+v_2\chi_{\Om_2}:v_i|_K\in H^1(K),i=1,2\}.
\een
For any $v\in H^1(\cM)$, $v|_K=v_1\chi_{K_1}+v_2\chi_{K_2}\ \forall K\in\cM$, we denote $\na_hv|_K:=\na v_1\chi_{K_1}+\na v_2\chi_{K_2}$, where $K_i=K\cap\Om_i$, $\chi_{K_i}$ is the characteristic function of $K_i$, $i=1,2$.

Let $\cE=\cE^{\rm side}\cup\cE^\Ga\cup\cE^{\rm bdy}$, where $\cE^{\rm side}=\{e=\pa K\cap\pa K':K,K'\in\cM\}$, $\cE^\Ga=\{\Ga_K:K\in\cM\}$ and $\cE^{\rm bdy}=\{e=\pa K\cap\pa\Om:K\in\cM\}$. Since hanging nodes are allowed, $e\in\cE^{\rm side}$ can be part of a side of an adjacent element. For $i=1,2$, denote by $\cM_i=\{K\in\cM:K\cap\Om_i\not=\emptyset\}$. Then $\Om_i\subset\Om_i^h=\cup\{K:K\in\cM_i\}$. We denote $\cE_i^{\rm side}$ the set of all sides of $\cM_i$ interior to $\Om_i^h$, that is, not on the boundary $\pa\Om_i^h$. Finally, we set $\bar{\cal{E}}= \cE^{\rm side}_{1} \cup\cE^{\rm side}_{2}\cup\cE^\Ga\cup\cE^{\rm bdy}$.

For any subset $\hat\cM\subset\cM$ and $\hat\cE\subset\bar\cE$, we use the notation
\ben
(u,v)_{\hat\cM}:=\sum_{K\in\hat\cM}(u,v)_K,\ \ \la u,v\ra_{\hat\cE}:=\sum_{e\subset\hat\cE}\la u,v\ra_e,
\een
where $(u,v)_K$ is the inner product of $L^2(K)$ and $\la u,v\ra_e$ is the inner product of $L^2(e)$.

For any $e\in\cE$, we fix a unit normal vector $n_e$ of $e$ with the convention that $n_e$ is the unit outer normal to $\pa\Om$ if $e\in\cE^{\rm bdy}$ and $n_e$ is the unit outer normal to $\pa\Om_1$ if $e\in\cE^\Ga$. For any $v\in H^1(\cM)$, we define the jump of $v$ across $e$ as 
\ben
\lj v\rj_e:=v_--v_+\ \ \forall e\in\cE^{\rm side}\cup\cE^\Ga,\ \ \ \
\lj v\rj_e:=v_-\ \ \forall e\in\cE^{\rm bdy},
\een
where $v_\pm$ is the trace of $v$ on $e$ in the $\pm n_e$ direction. We define the piecewise constant normal vector function $n\in L^\infty(\cE)=\Pi_{e\in\cE}L^\infty(e)$ by
$n|_e=n_e\ \ \forall e\in\cE$.

Now we introduce our unfitted finite element method in the framework of LDG method. We focus on the primal formulation by following Arnold, Brezzi, Cockburn and Marini \cite{Arnold}, Perugia and Sch\"otzau \cite{Perugia}. For any $v\in H^1(\cM),g\in L^2(\pa\Om)$, we define the liftings $\mathsf{L}(v)\in [\X_p(\cM)]^2$, $\mathsf{L}_1(g)\in [\X_p(\cM)]^2$ such that for any $r\in [\X_p(\cM)]^2$,
\be
(r,\mathsf{L}(v))_\cM=\la\hat{r}\cdot n,\lj v\rj\ra_{\cE},\ \ \ \
(r,\mathsf{L}_1(g))_\cM=\la r\cdot n,g\ra_{\cE^{\rm bdy}},\label{ll1}
\ee
where the numerical flux $\hat r|_e=\beta_er_-+(1-\beta_e)r_+\ \ \forall e\in\cE$. Here $\beta_e=0$ or $\beta_e=1$ for $e\in\cE^{\rm side}\cup\cE^\Ga$ and $\beta_e=1$ for $e\in\cE^{\rm bdy}$ as suggested in \cite{Cockburn} to enhance the sparsity of the stiffness matrix.

Our unfitted finite element method is to find $U\in\X_p(\cM)$ such that
\be\label{a2}
a_h(U,v)=F_h(v)\ \ \ \ \forall v\in\X_p(\cM),
\ee
where the bilinear form $a_h: H^1(\cM)\times H^1(\cM)\to \R$ and the functional $F_h:H^1(\cM)\to\R$ are
given by
\ben
& &a_h(v,w)=(a(\na_h v-\mathsf{L}(v)),\na_h w-\mathsf{L}(w))_\cM+\la\al\lj v\rj,\lj w\rj\ra_{\bar{\cal{E}}},\\
& &F_h(v)=(f,v)_\cM-(a\mathsf{L}_1(g),\na_h v-\mathsf{L}(v))_\cM+\la\al g,v\ra_{\cE^{\rm bdy}}.
\een
Here for any $v=v_1\chi_{\Om_1}+v_2\chi_{\Om_2}, w=w_1\chi_{\Om_1}+w_2\chi_{\Om_2}\in H^1(\cM)$,
\be\label{xx}
\la\al\lj v\rj,\lj w\rj\ra_{\bar{\cal{E}}}:=\sum^2_{i=1}\la\al\lj v_i\rj,\lj w_i\rj\ra_{\cE_i^{\rm side}}+\la\al \lj v\rj,\lj w\rj\ra_{\cE^\Ga\cup\cE^{\rm bdy}}.
\ee
We notice that the penalty is added on $\bar{\cal{E}}= \cE^{\rm side}_{1} \cup\cE^{\rm side}_{2}\cup\cE^\Ga\cup\cE^{\rm bdy}$ instead of $\cE=\cE^{\rm side}\cup\cE^\Ga\cup\cE^{\rm bdy}$. The interface penalty function $\al\in L^\infty(\cE)$ will be specified in \S2.3 after we prove the inverse trace inequality on the curved domain in the next subsection. We remark that the stabilization term $\la\al\lj v\rj,\lj w\rj\ra_{\cE^\Ga}$ plays the key role in weakly capturing the jump behavior of the finite element solution at the interface in the weak formulation \eqref{a2}.

To conclude this section, we remark that the unfitted finite element methods in the literature are mostly based on the interior penalty discontinuous Galerkin (IPDG) method. The LDG formulation allows us to prove the stability of the method without assuming the interface penalty constant $\al_0$ being sufficiently large (see \S 2.3 below).

\subsection{Domain inverse estimates}

Let $I=(-1,1)$ and $\{L_n\}_{n\ge 0}$ be the Legendre polynomials which are orthogonal in $L^2(I)$ and satisfy $L_n(1)=1$, $n\ge 0$. We start by recalling the first integral of Laplace for the Legendre polynomials (see, e.g., Szeg\"o \cite[P.97]{Szego}).

\begin{lemma}\label{lem:2.1}
For $n\ge 0$, we have
\ben
L_n(t)=\frac 1\pi\int^\pi_0\left[t+(t^2-1)^{1/2}\cos\phi\right]^nd\phi\ \ \ \ \forall t\in\R.
\een
\end{lemma}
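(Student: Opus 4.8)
The plan is to establish the classical Laplace integral representation
\[
L_n(t)=\frac 1\pi\int_0^\pi\left[t+(t^2-1)^{1/2}\cos\phi\right]^n\rd\phi
\]
by showing that the right-hand side, call it $P_n(t)$, is a polynomial of degree $n$ in $t$ that satisfies the same defining properties as $L_n$. First I would verify that $P_n$ is well-defined for all $t\in\R$: even though $(t^2-1)^{1/2}$ is imaginary for $|t|<1$, the integrand expanded by the binomial theorem contains only even powers of $(t^2-1)^{1/2}$ after integrating against $\cos^k\phi$ (odd powers of $\cos\phi$ integrate to zero on $[0,\pi]$), so $P_n(t)$ is a genuine real polynomial in $t$ of degree exactly $n$, with leading coefficient $\frac 1\pi\int_0^\pi(1+\cos\phi)^n\,\mathrm{stuff}$—more precisely one checks the coefficient of $t^n$ equals $\frac 1\pi\int_0^\pi(1+\cos^2\phi\cdot(\text{contributions}))$, which works out to $1$ after the same parity bookkeeping; and clearly $P_n(1)=\frac1\pi\int_0^\pi 1\,\rd\phi=1$.

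The heart of the matter is to show $P_n$ satisfies Legendre's differential equation $\big((1-t^2)P_n'\big)'+n(n+1)P_n=0$, or equivalently the three-term recurrence; either route, combined with $\deg P_n=n$ and $P_n(1)=1$, forces $P_n=L_n$ by uniqueness. I would prefer to differentiate under the integral sign: writing $w=t+(t^2-1)^{1/2}\cos\phi$, one computes $\partial w/\partial t = 1 + t(t^2-1)^{-1/2}\cos\phi = (w-? )$—the key algebraic identity is that $w$ satisfies a convenient relation with its $\phi$-derivative, namely $\partial w/\partial\phi=-(t^2-1)^{1/2}\sin\phi$ and $(t^2-1)\,\partial w/\partial t - (t-w)\cdot(\text{something})$ can be arranged so that the operator $((1-t^2)\partial_t)\partial_t + n(n+1)$ applied to $w^n$ becomes a total $\phi$-derivative of an expression vanishing at $\phi=0,\pi$ (because $\sin\phi=0$ there). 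Carrying out this computation, the boundary terms drop and one concludes $\big((1-t^2)P_n'\big)'+n(n+1)P_n=0$.

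The main obstacle is precisely this last manipulation: identifying the exact total-derivative structure so that integrating by parts in $\phi$ kills the differential operator. The bookkeeping with the factor $(t^2-1)^{1/2}$ and the chain rule is delicate, and one must be careful that all intermediate expressions remain integrable (they are, being continuous in $\phi$ on $[0,\pi]$ for each fixed $t$) and that differentiation under the integral sign is justified (dominated convergence applies since the integrand and its $t$-derivatives are jointly continuous on compact sets). An alternative that sidesteps the ODE is to prove the three-term recurrence $(n+1)P_{n+1}(t)=(2n+1)tP_n(t)-nP_{n-1}(t)$ directly by manipulating the integrand—again via an integration by parts in $\phi$ exploiting $\sin\phi\big|_0^\pi=0$—together with checking $P_0\equiv 1$ and $P_1(t)=t$ by direct evaluation; this is arguably the cleanest path and is the one I would actually write out, since the base cases are trivial and the inductive step is a single integration by parts. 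Either way, uniqueness of the polynomial sequence satisfying the recurrence (or the ODE) with the given normalization completes the proof. $\Box$
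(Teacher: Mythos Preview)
Your strategy is sound and the recurrence route you single out does work cleanly: with $w=t+(t^2-1)^{1/2}\cos\phi$ one has $w^2-2tw+1=-(t^2-1)\sin^2\phi$ and $(t^2-1)^{1/2}\cos\phi=w-t$, from which a direct computation gives
\[
\frac{d}{d\phi}\big[w^n(t^2-1)^{1/2}\sin\phi\big]=(n+1)w^{n+1}-(2n+1)t\,w^n+n\,w^{n-1};
\]
integrating over $[0,\pi]$ kills the left side and yields the three-term recurrence for $P_n$, while $P_0=1$, $P_1(t)=t$ are immediate. So the argument is correct once you commit to the recurrence and drop the vaguer ODE sketch.

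This is, however, a genuinely different proof from the paper's. The paper proceeds via complex analysis: it starts from Rodrigues' formula, writes $L_n(t)$ as a Cauchy integral, integrates by parts $n$ times to reach the Schl\"afli integral
\[
L_n(t)=\frac{1}{2\pi\i}\int_\Sigma\Big(\frac{1}{2}\,\frac{z^2-1}{z-t}\Big)^n\frac{dz}{z-t},
\]
and then parametrizes the contour $|z-t|=|t^2-1|^{1/2}$ by $z=t+(t^2-1)^{1/2}e^{\i\phi}$ to obtain the Laplace formula directly. The contrast is that the paper \emph{derives} the integral representation from a known closed form for $L_n$, whereas you \emph{verify} that the integral satisfies the defining recurrence of the Legendre sequence. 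Your approach is more elementary (no contour integration) and self-contained, but it requires the recurrence/normalization uniqueness argument as an extra step; the paper's route is shorter once Rodrigues and Cauchy are on the table, and it explains where the formula comes from rather than confirming it after the fact.
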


We remark that the integral on the right hand side of above identity is actually real if $|t|<1$ since $\int^\pi_0(\cos\phi)^{2k+1} d\phi=0$ for 
any integer $k\ge 0$.

\begin{proof} For the sake of completeness, we sketch the proof here. By Rodrigues' formula (cf., e.g., Bernardi and Maday \cite{Bernardi}), we know that
\ben
L_n(t)=\frac{(-1)^n}{2^n n!}\left(\frac d{dt}\right)^n\left[(1-t^2)^n\right]\ \ \ \ \forall t\in\R.
\een
By Cauchy's integration formula,
\ben
L_n(t)=\frac 1{2\pi\i}\int_{\Sigma}\frac{L_n(z)}{z-t}dz=\frac 1{2\pi\i}\frac{(-1)^n}{2^n n!}\int_{\Sigma}
\left(\frac d{dz}\right)^n\left[(1-z^2)^n\right]\frac 1{z-t}dz
\een
for any closed contour enclosing the point $z=t$. Integrating by parts we obtain
\ben
L_n(t)=\frac 1{2\pi\i}\int_{\Sigma}\left(\frac 12\frac{z^2-1}{z-t}\right)^n\frac{dz}{z-t}.
\een
The lemma is obvious if $t=\pm 1$. For $t\not=\pm 1$, we choose the circle $|z-t|=|t^2-1|^{1/2}$ as the
contour of the integration. By writing $z=t+(t^2-1)^{1/2}e^{\i\phi}$, we obtain easily the formula of Laplace.
$\Box$
\end{proof}

It follows from Lemma \ref{lem:2.1} that $|L_n(t)|\le 1$ $\forall t\in [-1,1]$, and
\be\label{a3}
|L_n(t)|\le \left(|t|+\sqrt{t^2-1}\right)^n\ \ \ \ \forall |t|>1,\ \ n\ge 0.
\ee

We now prove the one dimensional domain inverse estimate.

\begin{lemma}\label{lem:2.2}
Let $I_\lam=(-\lam,\lam), \lam>1$, we have
\ben
\|g\|_{L^2(I_\lam\backslash\bar I)}^2\le\frac 12\left[(\lam+\sqrt{\lam^2-1}\,)^{2p+1}-1\right]\|g\|_{L^2(I)}^2\ \ \ \ \forall g\in Q_p(I_\lam),
\een
where $Q_p(I_\lam)$ is the set of polynomials of order $p$ in $I_\lam$.
\end{lemma}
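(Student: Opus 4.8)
The plan is to expand $g$ in Legendre polynomials on the reference interval $I$ and then exploit the pointwise growth bound \eqref{a3} outside $I$. Since $\{L_0,\dots,L_p\}$ is a basis of the polynomials of degree $\le p$, we may write $g=\sum_{n=0}^p c_nL_n$ identically on $I_\lam$; the orthogonality relations $\|L_n\|_{L^2(I)}^2=2/(2n+1)$ give $\|g\|_{L^2(I)}^2=\sum_{n=0}^p\frac{2}{2n+1}c_n^2$ and, by the Cauchy--Schwarz inequality, the pointwise estimate
\[
|g(t)|^2\le\Big(\sum_{n=0}^p\tfrac{2}{2n+1}c_n^2\Big)\Big(\sum_{n=0}^p\tfrac{2n+1}{2}L_n(t)^2\Big)=\|g\|_{L^2(I)}^2\sum_{n=0}^p\tfrac{2n+1}{2}L_n(t)^2\qquad\forall\,t\in\R .
\]
(The second factor is the diagonal of the reproducing kernel of $\mathrm{span}\{L_0,\dots,L_p\}\subset L^2(I)$.) Integrating this over $I_\lam\backslash\bar I$ and using that $L_n^2$ is even, the lemma is reduced to the scalar inequality $\sum_{n=0}^p(2n+1)\int_1^\lam L_n(t)^2\,dt\le\frac12[(\lam+\sqrt{\lam^2-1})^{2p+1}-1]$.

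Next I would discard the oscillation of the Legendre polynomials via \eqref{a3}: on $[1,\lam]$ one has $L_n(t)^2\le\mu(t)^{2n}$ with $\mu(t):=t+\sqrt{t^2-1}$. The change of variables $t=\frac12(u+u^{-1})$, $u\in[1,\Lam]$ where $\Lam:=\lam+\sqrt{\lam^2-1}$ (equivalently $\lam=\tfrac12(\Lam+\Lam^{-1})$), makes the remaining integrals elementary: $\mu(t)=u$ and $\,dt=\frac12(1-u^{-2})\,du$, so $\int_1^\lam\mu(t)^{2n}\,dt=\frac12\int_1^\Lam(u^{2n}-u^{2n-2})\,du$, which equals $\frac12\big[\frac{\Lam^{2n+1}-1}{2n+1}-\frac{\Lam^{2n-1}-1}{2n-1}\big]$ for $n\ge1$ and $\frac12(\Lam+\Lam^{-1})-1$ for $n=0$. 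Multiplying by $2n+1$ and summing, the leading parts telescope through $\sum_{n=1}^p\frac12(\Lam^{2n+1}-\Lam^{2n-1})=\frac12(\Lam^{2p+1}-\Lam)$, and after collecting the lower-order remainders one is left with
\[
\sum_{n=0}^p(2n+1)\int_1^\lam\mu(t)^{2n}\,dt=\tfrac12\Lam^{2p+1}+\tfrac12\Lam^{-1}-1-\sum_{n=1}^p\frac{\Lam^{2n-1}-1}{2n-1}.
\]
Comparing this with $\frac12(\Lam^{2p+1}-1)$, the lemma follows once we check $\frac12(\Lam^{-1}-1)\le\sum_{n=1}^p\frac{\Lam^{2n-1}-1}{2n-1}$, which is immediate: $\Lam>1$ makes the left-hand side negative and every summand on the right nonnegative.

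I expect the only genuinely delicate point to be the bookkeeping in the telescoping sum — in particular, handling the $n=0$ term separately (where the antiderivative $u^{2n-1}/(2n-1)$ degenerates) and getting the finite-sum identity right. One might also worry that the pointwise Cauchy--Schwarz step throws away too much to recover the stated constant, but the computation above shows it is comfortably sufficient, with strict slack already present for $p=0$, where the bound reads $2(\lam-1)\le\lam+\sqrt{\lam^2-1}-1$.
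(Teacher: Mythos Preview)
Your proof is correct and follows essentially the same route as the paper: expand in Legendre polynomials, apply Cauchy--Schwarz to reduce to $\sum_n(2n+1)\int_1^\lam L_n(t)^2\,dt$, bound $L_n(t)^2$ by $\mu(t)^{2n}$ via \eqref{a3}, and use the Joukowski substitution $u=t+\sqrt{t^2-1}$. The only cosmetic difference is in the last step, where the paper writes the resulting sum as $\sum_n(n+\tfrac12)\int_1^\Lam(s^{2n}-s^{2n-2})\,ds$ and applies Abel summation (dropping the positive remainder) to reach $(p+\tfrac12)\int_1^\Lam s^{2p}\,ds=\tfrac12(\Lam^{2p+1}-1)$, whereas you evaluate each integral and telescope explicitly; the two computations are equivalent.
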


\begin{proof} It is well known that $\|L_n\|_{L^2(I)}=(n+1/2)^{-1/2}$ for $n\ge 0$. Thus, for any $g\in Q_p(I_\lam)$, $g(t)=\sum^p_{n=0}a_nL_n(t)$ and $\|g\|^2_{L^2(I)}=\sum^p_{n=0}a_n^2(n+1/2)^{-1}$. Now by Cauchy-Schwarz's inequality
\be\label{a4}
\|g\|^2_{L^2(I_\lam\backslash\bar I)}\le\|g\|^2_{L^2(I)}\cdot\sum^p_{n=0}(n+1/2)\|L_n\|^2_{L^2(I_\lam\backslash\bar I)}.
\ee
By using \eqref{a3} and taking the transform $s=t+\sqrt{t^2-1}$,
\ben
\sum^p_{n=0}(n+1/2)\|L_n\|^2_{L^2(I_\lam\backslash\bar I)}&\le&2\sum^p_{n=0}(n+1/2)\int^\lam_1(t+\sqrt{t^2-1}\,)^{2n}dt\\
&=&\sum^p_{n=0}(n+1/2)\int^{\lam+\sqrt{\lam^2-1}}_1(s^{2n}-s^{2n-2})ds.
\een
Now by using the summation by parts, we have
\ben
\sum^p_{n=0}(n+1/2)\|L_n\|^2_{L^2(I_\lam\backslash\bar I)}&\le&(p+1/2)\int^{\lam+\sqrt{\lam^2-1}}_1 s^{2p}ds\\
&=&\frac 12\left[(\lam+\sqrt{\lam^2-1}\,)^{2p+1}-1\right].
\een
This completes the proof by using \eqref{a4}. $\Box$
\end{proof}


It follows from Lemma \ref{lem:2.2} that for any $(a,b)\subset (a,c)$, we have
\be\label{ll}
\int^c_b|g|^2dt\le\frac12\left[(\lam+\sqrt{\lam^2-1})^{2p+1}-1\right]\int^b_a|g|^2dt\ \ \ \ \forall g\in  Q_p(a,c),
\ee
where $\lam=(c-t_0)/(b-t_0)$, $t_0=(a+b)/2$ is the midpoint of the interval $(a,b)$.

The following two dimensional domain inverse estimate plays a key role in the next subsection to study the stability of our unfitted finite element method.

\begin{lemma}\label{lem:2.3}
Let $\De$ be a triangle with vertices $A=(a_1,a_2)^T, B=(0,0)^T, C=(c_1,0)^T$, where $a_2,c_1>0$. Let $\de\in (0,a_2)$ and $\De_\de=\{x\in\De:\dist(x,BC)>\de\}$, where $\dist(x,BC)=\min\{|x-y|:y\in BC\}$. Then, we have
\ben
\|v\|_{L^2(\De)}\le\mathsf{T}\left(\frac{1+\delta a_2^{-1}}{1-\de a_2^{-1}}\right)^{2p+3/2}\|v\|_{L^2(\Delta_\de)}\ \ \ \ \forall v\in Q_p(\De).
\een
where $\mathsf{T}(t)=t+\sqrt{t^2-1}\ \ \forall t\ge 1$.
\end{lemma}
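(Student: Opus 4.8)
The plan is to integrate in polar coordinates centered at the apex $A$, which reduces the statement to a one‑dimensional domain inverse estimate with respect to the weight $\rho\,d\rho$, and then to prove that one‑dimensional estimate by repeating the argument of Lemma~\ref{lem:2.2} with the Jacobi polynomials $P^{(0,1)}_n$, which are orthogonal on $(-1,1)$ for the weight $(1+x)$.

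First I would replace $\Delta_\de$ by the possibly smaller set $\Delta^\de:=\Delta\cap\{x_2>\de\}$. Since the line through $B$ and $C$ is the $x_1$‑axis, $\dist(x,BC)\ge x_2$ for all $x\in\Delta$, so $\Delta^\de\subset\Delta_\de$ and it suffices to prove the inequality with $\Delta^\de$ in place of $\Delta_\de$. Setting $\rho_*:=1-\de a_2^{-1}\in(0,1)$, a short computation with barycentric coordinates shows that $\Delta^\de=A+\rho_*(\Delta-A)$, i.e. $\Delta^\de$ is the image of $\Delta$ under the homothety of ratio $\rho_*$ centered at $A$. Using polar coordinates $x=A+\rho\,\om(\theta)$, with $\theta$ ranging over the interior angle $\Theta$ of $\Delta$ at $A$ and $R(\theta)$ the distance from $A$ to $BC$ along the corresponding ray, we get $\Delta=\{A+\rho\om(\theta):\theta\in\Theta,\ 0<\rho<R(\theta)\}$; and since $x_2=a_2+\rho\sin\theta$ with $\sin\theta<0$ on each such ray, also $\Delta^\de=\{A+\rho\om(\theta):\theta\in\Theta,\ 0<\rho<\rho_* R(\theta)\}$. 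Therefore
\[
\|v\|_{L^2(\Delta)}^2=\int_\Theta\!\!\int_0^{R(\theta)}\! v(A+\rho\om(\theta))^2\,\rho\,d\rho\,d\theta,\qquad
\|v\|_{L^2(\Delta^\de)}^2=\int_\Theta\!\!\int_0^{\rho_* R(\theta)}\! v(A+\rho\om(\theta))^2\,\rho\,d\rho\,d\theta .
\]
For fixed $\theta$ the function $\rho\mapsto v(A+\rho\om(\theta))$ is a polynomial of degree at most $2p$ (the total degree of $v$), so after rescaling $\rho\mapsto\rho/R(\theta)$ it is enough to prove, with a constant independent of $\theta$, the one‑dimensional estimate: for $q\in Q_d(0,1)$ and $\beta\in(0,1)$,
\[
\int_0^1 q(\rho)^2\,\rho\,d\rho\ \le\ \mathsf{T}\!\Big(\tfrac{2-\beta}{\beta}\Big)^{2d+3}\int_0^\beta q(\rho)^2\,\rho\,d\rho ,
\]
and then apply it with $d=2p$, $\beta=\rho_*$ and integrate over $\theta$; one checks $\tfrac{2-\rho_*}{\rho_*}=\tfrac{1+\de a_2^{-1}}{1-\de a_2^{-1}}$ and $2d+3=2(2p+\tfrac32)$.

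To prove the one‑dimensional estimate I would expand $q=\sum_{n=0}^d a_n\Pi_n$ in the shifted Jacobi polynomials $\Pi_n(\rho):=P^{(0,1)}_n(\tfrac{2\rho}{\beta}-1)$, which are orthogonal on $(0,\beta)$ for $\rho\,d\rho$. The two facts needed are the normalization $\int_{-1}^1 P^{(0,1)}_n(x)^2(1+x)\,dx=\tfrac{2}{n+1}$ and the identity
\[
P^{(0,1)}_n(x)=\frac{L_{n+1}(x)+L_n(x)}{x+1},
\]
with $L_n$ the Legendre polynomials of Lemma~\ref{lem:2.1}; this identity follows from $L_{n+1}(-1)+L_n(-1)=0$, the degree count, the value $P^{(0,1)}_n(1)=1$, and $\int_{-1}^1(L_{n+1}+L_n)x^k\,dx=0$ for $k<n$. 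By orthogonality $\int_0^\beta q^2\rho\,d\rho=\sum_n a_n^2\,\|\Pi_n\|^2_{(0,\beta)}$ with $\|\Pi_n\|^2_{(0,\beta)}=\tfrac{\beta^2}{2(n+1)}$, and Cauchy--Schwarz with weights $\|\Pi_n\|^{-2}_{(0,\beta)}$ gives
\[
\int_\beta^1 q^2\rho\,d\rho\ \le\ \Big(\int_0^\beta q^2\rho\,d\rho\Big)\sum_{n=0}^d\frac{\|\Pi_n\|^2_{(\beta,1)}}{\|\Pi_n\|^2_{(0,\beta)}},
\]
the norms being in $L^2$ for $\rho\,d\rho$. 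On $(\beta,1)$ the argument $\tfrac{2\rho}{\beta}-1$ lies in $(1,\Lam)$ with $\Lam:=\tfrac{2-\beta}{\beta}>1$, so the identity together with $|L_n(x)|\le(|x|+\sqrt{x^2-1})^n$ from \eqref{a3} and the elementary bounds $\mathsf{T}(x)+1\le2\mathsf{T}(x)$, $x+1\ge2$ yields $(1+x)P^{(0,1)}_n(x)^2\le 2\mathsf{T}(x)^{2n+2}$ for $x>1$; hence the $n$‑th term above is at most $(n+1)\int_1^\Lam\mathsf{T}(x)^{2n+2}\,dx$. With the substitution $s=\mathsf{T}(x)$ (so that $\mathsf{T}(x)^{2m}\,dx=\tfrac12(s^{2m}-s^{2m-2})\,ds$) and summation by parts, exactly as in the proof of Lemma~\ref{lem:2.2},
\[
\sum_{n=0}^d(n+1)\int_1^\Lam\mathsf{T}(x)^{2n+2}\,dx\ \le\ \frac{d+1}{2(2d+3)}\big(\mathsf{T}(\Lam)^{2d+3}-1\big)\ \le\ \tfrac12\big(\mathsf{T}(\Lam)^{2d+3}-1\big).
\]
Adding $\int_0^\beta q^2\rho\,d\rho$ to both sides and using $\mathsf{T}(\Lam)\ge1$ gives $\int_0^1 q^2\rho\,d\rho\le\mathsf{T}(\Lam)^{2d+3}\int_0^\beta q^2\rho\,d\rho$; substituting back, taking square roots, and replacing $d+\tfrac32$ by $2p+\tfrac32$ (harmless since $\mathsf{T}(\Lam)\ge1$) completes the proof.

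The step I expect to be the crux is the weighted one‑dimensional estimate: a naive split $\int_0^1=\int_0^\beta+\int_\beta^1$ followed by the unweighted estimate \eqref{ll} loses a factor blowing up like $\de^{-1/2}$ as $\de\to0$, far from the claimed bound, so the weight $\rho\,d\rho$ must be handled sharply; the identity $P^{(0,1)}_n=\tfrac{L_{n+1}+L_n}{x+1}$, which transports the Legendre exterior bound \eqref{a3} into the weighted setting, is what makes the precise constant come out.
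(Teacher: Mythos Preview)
Your argument is correct and yields exactly the stated constant, but it differs from the paper's proof in how the weighted one--dimensional problem is handled. Both approaches use the same parametrization by segments issuing from the apex $A$; the paper writes $x=t(s,0)^T+(1-t)A$, $s\in(0,c_1)$, $t\in(0,1)$, which is your polar parametrization after the change $(\theta,\rho)\leftrightarrow(s,t)$, and both reduce to controlling $\int_0^1 q(t)^2\,t\,dt$ by $\int_0^{\rho_*} q(t)^2\,t\,dt$ for $q\in Q_{2p}$, $\rho_*=1-\de a_2^{-1}$.

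The difference is in this one--dimensional step. You build a weighted analogue of Lemma~\ref{lem:2.2} from scratch, expanding in the Jacobi polynomials $P_n^{(0,1)}$ orthogonal for $(1+x)\,dx$, invoking the identity $P_n^{(0,1)}=(L_{n+1}+L_n)/(1+x)$ to transport the Legendre exterior bound \eqref{a3} into the weighted setting, and then repeating the Cauchy--Schwarz plus summation--by--parts computation. The paper instead avoids any new orthogonal--polynomial machinery by the elementary trick of absorbing the weight into the polynomial: set $\tilde q(t)=t\,q(t)\in Q_{2p+1}$, use $t\le t^2/\rho_*$ on $(\rho_*,1)$ and $t^2\le\rho_*\,t$ on $(0,\rho_*)$ so that the two factors of $\rho_*^{\pm 1}$ cancel, and apply the \emph{unweighted} estimate \eqref{ll} directly to $\tilde q$ with $(a,b,c)=(0,\rho_*,1)$. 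This produces the same exponent $2(2p+1)+1=4p+3$, hence $2p+\tfrac32$ after the square root. Your closing remark that ``a naive split followed by \eqref{ll} loses a factor $\de^{-1/2}$'' is therefore a bit too pessimistic: the loss disappears once one multiplies by $t$ first. What your route buys is a principled template that would generalize immediately to Jacobians $\rho^k$ via $P_n^{(0,k)}$; what the paper's route buys is brevity and no appeal beyond Lemma~\ref{lem:2.2}. A minor point: with $d=2p$ one has $d+\tfrac32=2p+\tfrac32$ exactly, so no ``replacement'' is needed at the end.
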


\begin{proof} The triangle $\De$ can be parametrized as $x=t(s,0)^T+(1-t)(a_1,a_2)^T$, $s\in(0,c_1),t\in (0,1)$. The Jacobi determinant of the parametrization is $a_2t$. Obviously,
\ben
\int_{\De_\de}|v|^2dx=\int^{c_1}_0\int^{1-\de a_2^{-1}}_0|v(ts+(1-t)a_1,(1-t)a_2)|^2a_2tdtds.
\een
Since for a fixed $s$, $\tilde v(t)=v(ts+(1-t)a_1,(1-t)a_2)t\in Q_{2p+1}(0,1)$, we use \eqref{ll} to obtain
\ben
& &\int^1_{1-\de a_2^{-1}}|v(ts+(1-t)a_1,(1-t)a_2)|^2tdt\\
&\le&\frac 1{1-\de a_2^{-1}}\int^1_{1-\de a_2^{-1}}|tv(ts+(1-t)a_1,(1-t)a_2)|^2dt\\
&\le&\frac 12\left[\mathsf{T}\left(\frac{1+\de a_2^{-1}}{1-\de a_2^{-1}}\right)^{2(2p+1)+1}-1\right]\int^{1-\de a_2^{-1}}_0|v(ts+(1-t)a_1,(1-t)a_2)|^2tdt.
\een
This completes the proof. $\Box$
\end{proof}

The following lemma will be used in section 4 to prove the efficiency of the a posteriori error estimators.

\begin{lemma}\label{lem:2.4}
Let $\De\subset\R^2$ be a triangle and $\rho_\De$ the radius of its maximal inscribed circle. For any $\de\in (0,\rho_\De/2)$, denote $\De_\de=\{x\in \De:\dist(x,\pa \De)>\de\}$. Then for any $v\in Q_p(\De)$, we have
\ben
\|v\|_{L^2(\De)}\le (1+7\sqrt{\de/\rho_\De})^{2p+3/2}\|v\|_{L^2(\De_\de)}.
\een
\end{lemma}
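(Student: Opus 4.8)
The plan is to reduce the statement about a general triangle to the special configuration already handled in Lemma \ref{lem:2.3}, applied three times — once for each side of $\De$. Write $\De_\de^{(j)}=\{x\in\De:\dist(x,e_j)>\de\}$ for the three sides $e_1,e_2,e_3$ of $\De$, so that $\De_\de=\De_\de^{(1)}\cap\De_\de^{(2)}\cap\De_\de^{(3)}$, and observe that $\De\bks\De_\de$ is contained in the union of the three ``slabs'' $\De\bks\De_\de^{(j)}$. The idea is to peel off one slab at a time: first bound $\|v\|_{L^2(\De)}$ by a constant times $\|v\|_{L^2(\De_\de^{(1)})}$, then note $\De_\de^{(1)}$ is again a triangle (the one cut off from $\De$ by a line parallel to $e_1$), apply the same one-sided estimate inside it with side $e_2$, and once more with $e_3$. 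After three applications one lands inside $\De_\de$, and the constants multiply.

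The key step is the one-sided estimate: if $\De$ is a triangle with a distinguished side $e$ and opposite-vertex height $a_2$, then for $\de\in(0,a_2)$,
\ben
\|v\|_{L^2(\De)}\le\mathsf{T}\!\left(\frac{1+\de a_2^{-1}}{1-\de a_2^{-1}}\right)^{2p+3/2}\|v\|_{L^2(\De_\de^{e})}\qquad\forall v\in Q_p(\De),
\een
which is precisely Lemma \ref{lem:2.3} after an affine change of coordinates placing $e$ on the $x$-axis (affine maps preserve $Q_p$ and rescale both $L^2$-norms by the same Jacobian factor, so the inequality is invariant). To turn the factor $\mathsf{T}((1+\de a_2^{-1})/(1-\de a_2^{-1}))^{2p+3/2}$ into the clean bound $(1+7\sqrt{\de/\rho_\De})^{2p+3/2}$, I would use two elementary facts. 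First, every height of $\De$ is at least $2\rho_\De$ (the incircle of radius $\rho_\De$ fits between each side and its opposite vertex, so $a_2\ge 2\rho_\De$), hence $\de a_2^{-1}\le\de/(2\rho_\De)=:\tau\le 1/4$. Second, for $0\le\tau\le 1/4$ one checks $\mathsf{T}((1+\tau)/(1-\tau))\le 1+\tfrac72\sqrt{\tau}$ or some such explicit bound; combined with $\sqrt{\tau}=\sqrt{\de/(2\rho_\De)}$ this gives a single-slab constant of the form $(1+c\sqrt{\de/\rho_\De})^{2p+3/2}$, and the cube of such a constant is $\le(1+c'\sqrt{\de/\rho_\De})^{2p+3/2}$ with $c'=3c$ at worst after adjusting — here I would want to track constants carefully so that three applications yield exactly the $7$ in the statement. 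A subtle point in the iteration: when passing to the smaller triangle $\De_\de^{(1)}$, its inradius is only slightly smaller than $\rho_\De$ (it shrinks by at most $O(\de)$), so the height bounds used in the second and third applications are still of the form ``height $\ge$ const$\cdot\rho_\De$'', uniformly; one should verify $\de<\rho_\De/2$ is enough to keep all three sub-triangles nondegenerate and their relevant heights bounded below by, say, $\rho_\De$.

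The main obstacle is the constant bookkeeping: getting from the nested product of three factors $\mathsf{T}(\cdots)^{2p+3/2}$ to the single factor $(1+7\sqrt{\de/\rho_\De})^{2p+3/2}$ requires that the exponent $2p+3/2$ \emph{not} accumulate — which works because $(1+x)^{m}(1+y)^{m}(1+z)^{m}\le(1+(x{+}y{+}z)+\cdots)^{m}\le(1+(x{+}y{+}z))^{?}$ fails naively; instead one uses $\prod(1+x_i)^m=(\prod(1+x_i))^m\le(1+\sum x_i+\text{cross terms})^m$ and then bounds $\prod_{j=1}^3(1+c\sqrt{\de/\rho_\De})\le 1+7\sqrt{\de/\rho_\De}$ for $\de\le\rho_\De/2$ by absorbing the small cross terms, keeping the common exponent $2p+3/2$. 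So the real work is choosing $c$ in the single-slab estimate small enough that $(1+c\sqrt{t})^3\le 1+7\sqrt{t}$ for $0\le t\le 1/2$, which pins down how sharp the bound $\mathsf{T}((1+\tau)/(1-\tau))\le 1+c\sqrt{\tau}$ needs to be.
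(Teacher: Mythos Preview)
Your iterative ``peel one slab at a time'' strategy is a valid route to a bound of the form $(1+C\sqrt{\de/\rho_\De})^{2p+3/2}$, but it differs from the paper's argument and will not recover the constant~$7$.

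The paper does not iterate. Instead it connects the incenter $O$ to the three vertices, splitting $\De$ into three sub-triangles $\De^{(1)},\De^{(2)},\De^{(3)}$, each having one side $e_j$ of $\De$ as its base and $O$ as the opposite vertex at height \emph{exactly} $\rho_\De$. Because the segments $OV$ lie on the angle bisectors, for every $x\in\De^{(j)}$ one has $\dist(x,\pa\De)=\dist(x,e_j)$; hence $\{x\in\De^{(j)}:\dist(x,e_j)>\de\}=\De^{(j)}\cap\De_\de$. Applying Lemma~\ref{lem:2.3} once in each sub-triangle with $a_2=\rho_\De$ and \emph{summing the squared inequalities} (the three pieces are disjoint) yields a single factor $\mathsf{T}\bigl((1+\de/\rho_\De)/(1-\de/\rho_\De)\bigr)^{2p+3/2}$, which is then bounded by $(1+2(2+\sqrt 2)\sqrt{\de/\rho_\De})^{2p+3/2}\le(1+7\sqrt{\de/\rho_\De})^{2p+3/2}$.

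Your scheme instead applies Lemma~\ref{lem:2.3} three times in series, so the factors multiply: you must control $\prod_{j=1}^3\mathsf{T}(\lam_j)$ rather than a single $\mathsf{T}(\lam)$. Since the minimum height of a triangle can be arbitrarily close to $2\rho_\De$ (and shrinks further after each peel), near $\de/\rho_\De=1/2$ the individual factors can be around $3$--$5$, giving a product far exceeding $1+7/\sqrt 2\approx 6$. So the constant bookkeeping you flag as ``the main obstacle'' is not merely bookkeeping: the sequential approach genuinely cannot reach~$7$. The incenter decomposition sidesteps this entirely by making the three applications additive rather than multiplicative, with the common height equal to $\rho_\De$ on the nose.
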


\begin{proof} Let $O$ be the center of the maximal inscribed circle of $\De$. The triangle $\De$ is divided into three sub-triangles by connecting $O$ and three vertices of $\De$. We use Lemma \ref{lem:2.3} in each of the three triangles to obtain
\ben
\|v\|_{L^2(\De)}\le \mathsf{T}(\lam)^{2p+3/2}\|v\|_{L^2(\De_\de)},\ \ \lam=\frac{1+\de/\rho_\De}{1-\de/\rho_\De}.
\een
Since $\mathsf{T}(\lam)= 1+\sqrt{\lam-1}(\sqrt{\lam-1}+\sqrt{\lam+1})$ and $\lam<3$ by the assumption $\de\in (0,\rho_\De/2)$, we have
\ben
\|v\|_{L^2(\De)}\le (1+2(2+\sqrt 2)\sqrt{\de/\rho_\De})^{2p+3/2}\|v\|_{L^2(\De_\de)}.
\een
This completes the proof. $\Box$
\end{proof}

\subsection{Stability and a priori error analysis}

We first recall the standard multiplicative trace inequality (cf., e.g., Burman and Ern \cite{Burman07}), for any $K\in\cM$ and $v\in H^1(K)$,
\be\label{m1}
\|v\|_{L^2(\pa K)}\le Ch_K^{-1/2}\|v\|_{L^2(K)}+C\|v\|_{L^2(K)}^{1/2}\|\na v\|_{L^2(K)}^{1/2}.
\ee

The following lemma is proved in Xiao, Xu and Wang \cite{Wang} when the interface $\Ga$ is $C^2$-smooth. It can be extended to cover the case when $\Ga$ is Lipschitz and piecewise $C^2$ as assumed in this paper.

\begin{lemma}\label{lem:2.5}
For any $K\in\cM$, denote $K_i=K\cap\Om_i$, $i=1,2$. Then there exists a constant $C$ independent of $h_K$ such that for $i=1,2$,
\ben
\|v\|_{L^2(\Ga_K)}\le C\|v\|_{L^2(K_i)}^{1/2}\|v\|_{H^1(K_i)}^{1/2}+\|v\|_{L^2(\pa K_i\backslash\bar\Ga_K)}\ \ \ \ \forall v\in H^1(K_i).
\een
\end{lemma}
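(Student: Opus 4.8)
The plan is to derive the estimate from the divergence theorem applied to $|v|^2\beta$ on $K_i$, where $\beta$ is a unit vector field whose component along the outward unit normal $\bn_K$ of $K_i$ equals $1$ on $\Ga_K$. For a single $C^2$ arc — in particular when $K$ contains no singular point of $\Ga$ — this is the result of \cite{Wang}, established as follows. Take $\beta$ on $\Ga_K$ to be the unit normal of $\Ga$ pointing out of $\Om_i$, and extend it to $\overline{K_i}$ by making it constant along the normals of $\Ga$, i.e. $\beta(x)=\beta(\pi(x))$ with $\pi$ the nearest-point projection onto $\Ga$. Since $\Ga$ is $C^2$ with a fixed curvature bound $\kappa$ and every interface element of $\cM$ satisfies \eqref{HH} (which forces $h_K$ small wherever $\Ga$ is curved), every point of $K_i$ lies within $\diam(K)$ of $\Ga_K$, hence inside the tubular neighbourhood of $\Ga$ on which $\pi$ is $C^1$; therefore $\beta$ is Lipschitz on $\overline{K_i}$ with $|\beta|\equiv1$, $\|\div\beta\|_{L^\infty(K_i)}\le C(\kappa)$, and $\beta\cdot\bn_K=1$ on $\Ga_K$.

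Then, writing $\pa K_i=\Ga_K\cup(\pa K_i\bks\bar\Ga_K)$ and using the divergence theorem (valid for $v\in H^1(K_i)$ by the trace theorem on the Lipschitz domain $K_i$),
\[
\|v\|_{L^2(\Ga_K)}^2=\int_{\Ga_K}|v|^2(\beta\cdot\bn_K)\rd s=\int_{K_i}\big(|v|^2\div\beta+2v\,\na v\cdot\beta\big)\rd x-\int_{\pa K_i\bks\bar\Ga_K}|v|^2(\beta\cdot\bn_K)\rd s.
\]
Estimating with $|\div\beta|\le C(\kappa)$, $|\beta|\le1$, the Cauchy--Schwarz inequality, $|\beta\cdot\bn_K|\le1$ on $\pa K_i\bks\bar\Ga_K$, and $\|v\|_{L^2(K_i)}^2\le\|v\|_{L^2(K_i)}\|v\|_{H^1(K_i)}$ gives $\|v\|_{L^2(\Ga_K)}^2\le C\|v\|_{L^2(K_i)}\|v\|_{H^1(K_i)}+\|v\|_{L^2(\pa K_i\bks\bar\Ga_K)}^2$, and the asserted bound follows from $\sqrt{a+b}\le\sqrt a+\sqrt b$. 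The coefficient $1$ in front of the last term is exactly what the identity above delivers, precisely because $|\beta|\equiv1$.

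To remove the $C^2$ hypothesis I would treat $K\in\cM$ containing a singular point $Q_K$, so that $\Ga_K=\Ga_{1K}\cup\Ga_{2K}$ with two $C^2$ arcs meeting at $Q_K$; there $\bn_K$ jumps at $Q_K$ and no single unit-length Lipschitz field can equal $\bn_K$ along all of $\Ga_K$, so a splitting is forced. Split $K_i$ by the straight segment $\ell$ issuing from $Q_K$ along the bisector of the interior wedge of $\Om_i$ at $Q_K$, into two subregions $K_i^{(1)},K_i^{(2)}$ with $\Ga_{jK}\subset\pa K_i^{(j)}$, and let $\beta$ equal $\beta^{(j)}$ on $K_i^{(j)}$, where $\beta^{(j)}$ is the extended unit normal of $\Ga_{jK}$ built as above; then $|\beta|\equiv1$, $\beta\cdot\bn_K=1$ on $\Ga_K$, and $|\beta\cdot\bn_K|\le1$ on $\pa K_i\bks\bar\Ga_K$. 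Applying the divergence theorem on each $K_i^{(j)}$ and adding, the contributions along the shared edge $\ell$ combine into $\int_\ell|v|^2(\beta^{(2)}-\beta^{(1)})\cdot\bn_\ell\rd s$, with $\bn_\ell$ a unit normal of $\ell$. The bisector choice is what controls this term: $(\beta^{(2)}-\beta^{(1)})\cdot\bn_\ell=0$ at $Q_K$ — the normals of $\Ga_{1K}$ and $\Ga_{2K}$ there differ by a vector tangent to $\ell$ — and since these normals vary by $O(\kappa\,\dist(\cdot,Q_K))$ along $\ell$, one has $|(\beta^{(2)}-\beta^{(1)})\cdot\bn_\ell|\le C\kappa h_K$ on $\ell$; hence that contribution is at most $C\kappa h_K\|v\|_{L^2(\ell)}^2$, and bounding $\|v\|_{L^2(\ell)}^2$ by the standard multiplicative trace inequality \eqref{m1} on $K_i^{(1)}$ converts it into $C\|v\|_{L^2(K_i)}^2+C\|v\|_{L^2(K_i)}\|\na v\|_{L^2(K_i)}$. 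Estimating the remaining terms exactly as in the single-arc case — which requires, via Definition \ref{def:2.1} and \eqref{HH} (through the shape-regular triangle with vertices $A_K^i$, $Q_K$ and a point of $\Ga\cap\pa K$ noted after Definition \ref{def:2.1}), that $K_i^{(1)},K_i^{(2)}$ be Lipschitz domains of shape depending only on $\de_0$ — we recover the inequality, once more with coefficient $1$ on the $\|v\|_{L^2(\pa K_i\bks\bar\Ga_K)}$ term.

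The step I expect to be the main obstacle is this singular-point case, and within it the verification that the bisecting segment $\ell$ can be kept inside $K_i$ (shortening it if it would otherwise meet $\Ga_{1K}$ or $\Ga_{2K}$) in such a way that $K_i^{(1)},K_i^{(2)}$ stay Lipschitz domains whose constants — which enter both the extended-normal construction and \eqref{m1} — depend only on $\de_0$ and the bound \eqref{HH}; together with the quantitative claim that $(\beta^{(2)}-\beta^{(1)})\cdot\bn_\ell$ vanishes at $Q_K$ and grows at most linearly along $\ell$, which is exactly what prevents the interior trace on $\ell$ from corrupting the unit coefficient in front of $\|v\|_{L^2(\pa K_i\bks\bar\Ga_K)}$. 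The $C^2$ part is comparatively routine, the only point needing care there being that $K_i$ lies within the reach of $\Ga$, which \eqref{HH} guarantees.
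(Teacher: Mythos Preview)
Your treatment of the $C^2$ case is correct and is indeed the argument of \cite{Wang}. The singular-point case, however, contains a genuine error in the central geometric claim. You assert that the bisector choice gives $(\beta^{(2)}-\beta^{(1)})\cdot\bn_\ell=0$ at $Q_K$, i.e.\ that the two outward normals differ by a vector tangent to $\ell$. The opposite is true: by reflection symmetry across the bisector, $\beta^{(1)}\leftrightarrow\beta^{(2)}$, so $\beta^{(1)}+\beta^{(2)}$ is tangent to $\ell$ while $\beta^{(1)}-\beta^{(2)}$ is \emph{normal} to it. Concretely, for a wedge of opening angle $\alpha$ with bisector along the $x$-axis, the outward normals at $Q_K$ are $(-\sin(\alpha/2),\pm\cos(\alpha/2))$, so $\beta^{(1)}-\beta^{(2)}=(0,2\cos(\alpha/2))$ and $(\beta^{(1)}-\beta^{(2)})\cdot\bn_\ell=\pm 2\cos(\alpha/2)\ne 0$. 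The interior-edge term is therefore of order $\|v\|_{L^2(\ell)}^2$, not $O(h_K)\|v\|_{L^2(\ell)}^2$; bounding it via \eqref{m1} yields a contribution $Ch_K^{-1/2}\|v\|_{L^2(K_i)}$ which is not controlled by $\|v\|_{L^2(K_i)}^{1/2}\|v\|_{H^1(K_i)}^{1/2}$ and so wrecks the estimate. Nor can you repair this by redirecting $\ell$: the direction along which the jump is tangent is that of $\beta^{(1)}-\beta^{(2)}$ itself, perpendicular to the bisector, and for $\alpha<\pi$ this lies outside the wedge $K_i$.

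The paper avoids all of this with a different and much simpler device. Cover $\Ga$ by finitely many patches $U_j$ with a smooth partition of unity $\{\phi_j\}$, choosing in each $U_j$ a coordinate index $k(j)$ with $|\nu_{k(j)}|\ge 1/2$. Then $\frac12\int_{\Ga_K}|v|^2\,ds\le\sum_j\big|\int_{\Ga_K}|v|^2\phi_j\nu_{k(j)}\,ds\big|$, and each summand is rewritten via $\int_{K_i}\pa_{x_{k(j)}}(\phi_j|v|^2)\,dx$ minus the boundary piece on $\pa K_i\setminus\bar\Ga_K$. The vector field is the constant coordinate vector $e_{k(j)}$, so no normal extension, no tubular neighbourhood, no interior cut, and no curvature term are needed; the only constant that appears is $\max_j\|\na\phi_j\|_{L^\infty}$, which depends on $\Ga$ but not on $h_K$. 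The singular points are handled automatically by the partition of unity rather than by any geometric splitting of $K_i$.
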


\begin{proof} Since $\Ga$ is Lipschitz continuous and piecewise $C^2$, there is a set of subdomains $\{U_j\}^r_{j=1}$ that covers $\Ga$ and a partition of unity $\{\phi_j\}^r_{j=1}$ subordinated to $\{U_j\}^r_{j=1}$, that is, $\phi_j\in C^\infty_0(U_j), 0\le\phi_j\le 1, \sum^r_{j=1}\phi_j=1$ in $\cup^r_{j=1}U_j$. Moreover, let $\nu=(\nu_1,\nu_2)^T$ be the unit outer normal vector to $\pa\Om_1$, we may assume in each $U_j$, there exists an index $k(j)=1$ or $2$, such that $|\nu_{k(j)}|\ge 1/2$ in $U_j$, $j=1,\cdots,r$. Here for the points on $\Ga$ where $\nu$ is discontinuous, we define $\nu=(1/\sqrt 2,1/\sqrt 2)^T$. Since $\nu_{k(j)}$ does not change sign in each $U_j$, we have
\ben
\frac 12\int_{\Ga_K}|v|^2ds=\frac 12\sum^r_{j=1}\int_{\Ga_K}|v|^2\phi_jds&\le&\sum^r_{j=1}\int_{\Ga_K}|v|^2\phi_j|\nu_{k(j)}|ds\\
&\le&\sum^r_{j=1}\left|\int_{\Ga_K}|v|^2\phi_j \nu_{k(j)}ds\right|.
\een
Now by integration by parts, we obtain
\ben
\int_{\Ga_K}|v|^2\phi_j \nu_{k(j)}ds&=&\int_{\pa K_i}|v|^2\phi_j \nu_{k(j)}ds-\int_{\pa K_i\backslash\bar\Ga_K}|v|^2\phi_j
\nu_{k(j)}ds\\
&=&\int_{K_i}\frac{\pa}{\pa x_{k(j)}}\left[\phi_j|v|^2\right]dx-\int_{\pa K_i\backslash\bar\Ga_K}|v|^2\phi_j
\nu_{k(j)}ds\\
&\le&C\|v\|_{L^2(K_i)}^2+2\|v\|_{L^2(K_i)}\|\na v\|_{L^2(K_i)}+\|v\|_{L^2(\pa K_i\backslash\bar\Ga_K)}^2,
\een
where $C=\max_{1\le j\le r}\|\na\phi_j\|_{L^\infty(U_j)}$. This completes the proof. $\Box$
\end{proof}

We will use the following inverse trace inequality in Warburton and Hesthaven \cite{Warburton}.

\begin{lemma}\label{lem:2.6}
Let $\De$ be a triangle. For any $v\in P_p(\De)$, the set of all polynomials of order $p$ in $\De$, we have
\ben
\|v\|_{L^2(\pa \De)}\le\sqrt{\frac{(p+1)(p+2)}2\frac{|\pa \De|}{|\De|}}\,\|v\|_{L^2(\De)}.
\een
\end{lemma}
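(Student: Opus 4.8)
The target is the inverse trace inequality of Warburton–Hesthaven: for a triangle $\De$ and $v\in P_p(\De)$,
\[
\|v\|_{L^2(\pa\De)}\le\sqrt{\tfrac{(p+1)(p+2)}{2}\tfrac{|\pa\De|}{|\De|}}\,\|v\|_{L^2(\De)}.
\]
The plan is to reduce everything to the reference triangle, diagonalize the relevant quadratic forms against an $L^2$-orthonormal polynomial basis (the Koornwinder–Dubiner / Proriol basis), and estimate a single sharp constant per edge; a scaling argument then recovers the geometric factor $|\pa\De|/|\De|$.

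\emph{Step 1: Reduction to the reference triangle.} First I would fix the reference triangle $\hat\De$ (say with vertices $(-1,-1),(1,-1),(-1,1)$) and note that for any non-degenerate affine map $\Phi:\hat\De\to\De$, both $|\De|/|\hat\De|$ scales by the Jacobian $|J_\Phi|$ and each edge length $|e|$ scales by the corresponding edge factor of $\Phi$. Pushing $v$ through $\Phi$, the claimed bound on $\De$ follows from the bound on $\hat\De$ provided the reference-triangle constant is $\sqrt{(p+1)(p+2)/2}\cdot\sqrt{|\pa\hat\De|/|\hat\De|}$ for the right normalization — more precisely one proves the per-edge inequality $\|v\|_{L^2(e)}^2\le \frac{(p+1)(p+2)}{2}\frac{|e|}{|\De|}\|v\|_{L^2(\De)}^2$ and sums over the three edges, since $\|v\|_{L^2(\pa\De)}^2=\sum_e\|v\|_{L^2(e)}^2$ and $|\pa\De|=\sum_e|e|$. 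So it suffices to establish, for each edge $\hat e$ of $\hat\De$,
\[
\|v\|_{L^2(\hat e)}^2\le\frac{(p+1)(p+2)}{2}\,\frac{|\hat e|}{|\hat\De|}\,\|v\|_{L^2(\hat\De)}^2\qquad\forall v\in P_p(\hat\De).
\]

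\emph{Step 2: Diagonalization.} The key step is to pick the $L^2(\hat\De)$-orthonormal Koornwinder–Dubiner basis $\{\psi_k\}_{k=1}^{N}$ of $P_p(\hat\De)$, $N=\dim P_p(\hat\De)=(p+1)(p+2)/2$. Writing $v=\sum_k c_k\psi_k$, the edge mass matrix $M_{\hat e}$ with entries $\int_{\hat e}\psi_j\psi_k$ satisfies $\|v\|_{L^2(\hat e)}^2=c^T M_{\hat e}\,c$ while $\|v\|_{L^2(\hat\De)}^2=|c|^2$, so the sharp constant is the largest eigenvalue $\lambda_{\max}(M_{\hat e})$, and by symmetry it is the same for all three edges. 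Thus the whole inequality reduces to the single spectral bound $\lambda_{\max}(M_{\hat e})\le\frac{(p+1)(p+2)}{2}\cdot\frac{|\hat e|}{|\hat\De|}$. Using $\lambda_{\max}(M_{\hat e})\le\operatorname{tr}(M_{\hat e})=\sum_k\int_{\hat e}\psi_k^2$ and the structure of the Koornwinder basis — on the edge $\hat e$ the basis functions restrict to (scaled) Legendre polynomials, with the ``interior'' Jacobi factors vanishing — one computes $\sum_k\int_{\hat e}\psi_k^2$ in closed form; the collapsed/tensor structure of the basis makes the $\hat e$-restriction telescope and yields exactly $N\cdot\frac{|\hat e|}{|\hat\De|}$ (this is the identity at the heart of the Warburton–Hesthaven bound). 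Combining, $\lambda_{\max}(M_{\hat e})\le N|\hat e|/|\hat\De|=\frac{(p+1)(p+2)}{2}\frac{|\hat e|}{|\hat\De|}$, and summing over the three edges gives the claim on $\hat\De$, hence on $\De$ by Step 1.

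\emph{Main obstacle.} The delicate point is Step 2: verifying that the trace of the edge mass matrix in the Koornwinder–Dubiner basis evaluates \emph{exactly} to $N|\hat e|/|\hat\De|$ rather than just being bounded by it. This rests on the precise normalization of the collapsed-coordinate Jacobi polynomials and the fact that on each edge only the ``pure Legendre'' directions survive (the transverse Jacobi weight kills the rest), so that $\sum_k\psi_k|_{\hat e}^2$ telescopes via the Christoffel–Darboux-type identity for Legendre polynomials. Getting that bookkeeping right — and checking that $\lambda_{\max}\le\operatorname{tr}$ is in fact tight enough (it is, since the relevant Gram structure makes $M_{\hat e}$ effectively rank-one on the surviving directions) — is where all the real work lies; the affine reduction and the summation over edges are routine. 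Alternatively, one can bypass the explicit basis computation and cite the result directly, as the statement of Lemma~\ref{lem:2.6} does, referring to Warburton and Hesthaven \cite{Warburton}.
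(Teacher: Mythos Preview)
The paper does not prove Lemma~\ref{lem:2.6} at all; it simply states the inequality and attributes it to Warburton and Hesthaven~\cite{Warburton}. Your closing remark (``one can bypass the explicit basis computation and cite the result directly'') is therefore exactly what the paper does, so at that level your proposal matches.

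The bulk of your proposal, however, goes further and sketches a proof. The architecture --- affine reduction to a reference triangle, per-edge estimate, $L^2$-orthonormal Koornwinder--Dubiner basis, spectral bound for the edge mass matrix --- is indeed the Warburton--Hesthaven framework. But several of the details you rely on are not accurate. First, the identity $\operatorname{tr}(M_{\hat e})=N\,|\hat e|/|\hat\De|$ does not hold edge by edge on a general reference triangle: $\sum_k\psi_k(x)^2$ is the Christoffel function of $P_p$ in $L^2(\hat\De)$, which is far from constant (it blows up at the boundary), so its integral over different edges does not simply scale with $|\hat e|$. Second, the crude bound $\lambda_{\max}\le\operatorname{tr}$ would in general lose a factor of the rank; the trace map to an edge has rank $p+1$, not one, so your ``effectively rank-one'' justification is incorrect. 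Third, the claim that ``the transverse Jacobi weight kills the rest'' is only true on the collapsed-coordinate vertex edge $\eta=1$; on the base edge $\eta=-1$ the factor $((1-\eta)/2)^m$ equals~$1$ and all modes survive.

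What Warburton--Hesthaven actually do is exploit the explicit tensor structure of the Dubiner basis on each face to compute the \emph{exact} largest eigenvalue of $M_{\hat e}$ (not merely bound it by a trace), which turns out to equal $\frac{(p+1)(p+2)}{2}\cdot\frac{|\hat e|}{|\hat\De|}$; summing over edges then gives the stated inequality, which is sharp. So your outline points in the right direction, but the mechanism you propose for Step~2 would not close the argument with the claimed constant.
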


\begin{figure}[t]
	\centering
	\includegraphics[width=0.35\textwidth]{./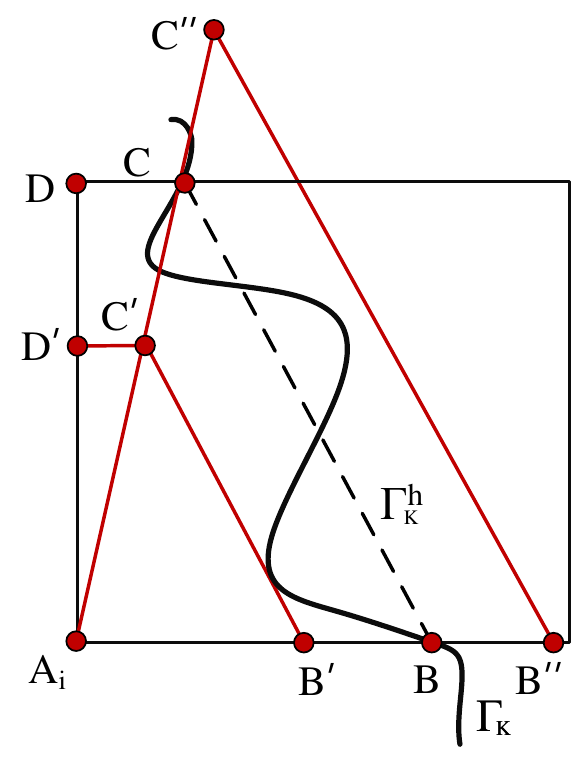}
	\caption{The figure used in the proof of Lemma \ref{lem:2.7} and Lemma \ref{lem:4.1}.}
	\label{fig:2.3}
\end{figure}

The following inverse trace inequality on curved domains plays a key role in our analysis.

\begin{lemma}\label{lem:2.7}
Let $K\in\cM^\Ga:=\{K\in\cM:K\cap\Ga\not=\emptyset\}$. Then for $i=1,2$,
\ben
\|v\|_{L^2(\pa K_i)}\le Cph_K^{-1/2}\mathsf{T}\left(\frac{1+3\eta_K}{1-\eta_K}\right)^{2p}\|v\|_{L^2(K_i)}\
 \ \ \ \forall v\in Q_p(K),
\een
where the constant $C$ is independent of $h_K,p$, and $\eta_K$.
\end{lemma}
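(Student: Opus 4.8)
The plan is to reduce the estimate on the curved subdomain $K_i$ to the inverse trace inequality on a triangle (Lemma~\ref{lem:2.6}) combined with the two-dimensional domain inverse estimate (Lemma~\ref{lem:2.3}). The boundary $\pa K_i$ consists of straight segments lying on $\pa K$ together with the curved piece $\Ga_K$. For the straight portions, I would first split off a triangle $\De\subset K_i$ whose base is the relevant side of $K$ and whose apex is the ``far'' vertex $A_K^i$ of $K$ in $\Om_i$ (using that $K$ is a large element, so a fixed fraction of each side lies in $\Om_i$); then $\|v\|_{L^2(e)}\le\|v\|_{L^2(\pa\De)}$ for the relevant side $e$, and Lemma~\ref{lem:2.6} gives $\|v\|_{L^2(\pa\De)}\le C p\,|\De|^{-1/2}|\pa\De|^{1/2}\|v\|_{L^2(\De)}$. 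Since $K$ is a large element, $|\De|\gtrsim h_K^2$ and $|\pa\De|\lesssim h_K$, which yields the factor $p\,h_K^{-1/2}$; finally $\|v\|_{L^2(\De)}\le\|v\|_{L^2(K_i)}$. Summing over the (boundedly many) straight sides of $K_i$ handles the polygonal part of $\pa K_i$ with no loss beyond $Cp\,h_K^{-1/2}$.

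The substantive part is controlling $\|v\|_{L^2(\Ga_K)}$, and this is where the interface deviation $\eta_K$ and Lemma~\ref{lem:2.3} enter. The idea is to apply Lemma~\ref{lem:2.5} to replace $\|v\|_{L^2(\Ga_K)}$ by $C\|v\|_{L^2(K_i)}^{1/2}\|v\|_{H^1(K_i)}^{1/2}+\|v\|_{L^2(\pa K_i\bks\bar\Ga_K)}$; the second term is the polygonal boundary already handled, so it remains to bound the interior $H^1$ norm. For that I would use a standard inverse inequality $\|\na v\|_{L^2(K_i)}\le\|\na v\|_{L^2(K)}\le Cph_K^{-1}\|v\|_{L^2(K)}$, but here $\|v\|_{L^2(K)}$ (the norm over the \emph{full} rectangle) is larger than $\|v\|_{L^2(K_i)}$, and that discrepancy is exactly what the domain inverse estimate must absorb. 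Concretely, the triangle $\De$ formed by the chord $\Ga_K^h$ and the vertex $A_K^i$ satisfies: $\De\cap\Om_i$ contains the ``inner'' subtriangle $\De_\de$ with $\de=\dist_{\rm H}(\Ga_K,\Ga_K^h)$, so by Lemma~\ref{lem:2.3}, with $a_2=\dist(A_K^i,\Ga_K^h)$ and hence $\de a_2^{-1}=\eta_K$, one gets $\|v\|_{L^2(\De)}\le\mathsf{T}\!\left(\tfrac{1+\eta_K}{1-\eta_K}\right)^{2p+3/2}\|v\|_{L^2(\De\cap\Om_i)}\le\mathsf{T}\!\left(\tfrac{1+\eta_K}{1-\eta_K}\right)^{2p+3/2}\|v\|_{L^2(K_i)}$. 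Covering $K$ by $\De$ together with a boundedly many further triangles sitting inside $\Om_i$ (for the part of $K$ outside $\De$, where $K\subset\Om_i$ locally or a similar wedge argument applies), one transfers $\|v\|_{L^2(K)}$ to $\|v\|_{L^2(K_i)}$ at the cost of this factor; the exponent $2p$ in the statement, with argument $(1+3\eta_K)/(1-\eta_K)$ rather than $(1+\eta_K)/(1-\eta_K)$, reflects that the irregular-element and multi-wedge bookkeeping forces a slightly larger constant inside $\mathsf{T}$, while $\mathsf{T}(t)^{2p+3/2}\le\mathsf{T}(t)^{2p}\cdot C$ absorbs the lower-order exponent into $C$.

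Putting the pieces together: $\|v\|_{L^2(\pa K_i)}\le\|v\|_{L^2(\pa K_i\bks\bar\Ga_K)}+\|v\|_{L^2(\Ga_K)}$, bound the first term by $Cph_K^{-1/2}\|v\|_{L^2(K_i)}$ as above, bound the second by Lemma~\ref{lem:2.5} in terms of $\|v\|_{L^2(K_i)}^{1/2}\|v\|_{H^1(K_i)}^{1/2}$ plus the polygonal part, and use $\|v\|_{H^1(K_i)}\le Cph_K^{-1}\|v\|_{L^2(K)}\le Cph_K^{-1}\mathsf{T}\!\left(\tfrac{1+3\eta_K}{1-\eta_K}\right)^{2p}\|v\|_{L^2(K_i)}$ via the domain inverse estimate; the geometric-mean term $\|v\|_{L^2(K_i)}^{1/2}\|v\|_{H^1(K_i)}^{1/2}$ then also carries only $\mathsf{T}(\cdots)^{p}$, which is dominated by $\mathsf{T}(\cdots)^{2p}$. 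The main obstacle, and the step requiring the most care, is the geometric argument that decomposes $K$ into triangles each of which has one vertex at $A_K^i$ (or $A_K^i, Q_K$ in the irregular case) and an inner subtriangle contained in $K_i$ with the \emph{same} ratio parameter $\eta_K$ — this must be done uniformly in the shape of $\Ga_K$ and in whether $K$ has one, two, or three vertices in $\Om_i$, and it is precisely here that Definition~\ref{def:2.2} of the interface deviation and the large-element condition (Definition~\ref{def:2.1}) are used to guarantee that all the relevant triangles are nondegenerate with constants depending only on $\de_0$. The $p$-dependence ($Cp$ rather than $Cp^2$) comes cleanly from Lemma~\ref{lem:2.6} on the triangles and from the sharp $p$-explicit one-dimensional inverse inequality underlying the $h_K^{-1}$ gradient bound.
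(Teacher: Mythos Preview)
Your overall architecture --- split $\pa K_i$ into $\Ga_K$ and the straight part, invoke Lemma~\ref{lem:2.5} for $\Ga_K$, Lemma~\ref{lem:2.6} on triangles for the trace inverse, and Lemma~\ref{lem:2.3} to pass between curved and polygonal regions --- matches the paper. But there is a genuine gap in how you control $\|\na v\|_{L^2(K_i)}$. You propose $\|\na v\|_{L^2(K_i)}\le\|\na v\|_{L^2(K)}\le Cp\,h_K^{-1}\|v\|_{L^2(K)}$ and then ``transfer $\|v\|_{L^2(K)}$ to $\|v\|_{L^2(K_i)}$'' via the domain inverse estimate with an $\eta_K$-dependent factor. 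This cannot work: the full rectangle $K$ extends past $K_i$ by $O(h_K)$, not by $O(\de)$ with $\de=\dist_{\rm H}(\Ga_K,\Ga_K^h)$, so any bound $\|v\|_{L^2(K)}\le C(\eta_K,p)\|v\|_{L^2(K_i)}$ must carry a factor that stays exponentially large in $p$ even as $\eta_K\to 0$. Concretely, take $K=(-1,1)^2$ split by the vertical segment $x_1=0$ (so $\eta_K=0$) and $v=(1+x_1)^p$: then $\|v\|_{L^2(K)}/\|v\|_{L^2(K_1)}\sim 2^{p}$, whereas your claimed bound would be $C\,\mathsf{T}(1)^{2p}=C$.

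The paper avoids this by never passing through $K$. It covers $K_i$ (not $K$) by shape-regular triangles that extend past $\Ga_K$ by at most $\de$: in the trapezoid case $K_i\subset\De A_iCD\cup\De A_iB''C''$, where $B''C''$ is parallel to $\Ga_K^h$ at distance $\de$ on the far side. The $hp$ inverse estimate (which is $\|\na v\|\le Cp^2h_K^{-1}\|v\|$, not $Cp\,h_K^{-1}$ as you wrote --- Markov's inequality is sharp at $p^2$) is applied on these triangles, and then Lemma~\ref{lem:2.3} pulls $\|v\|_{L^2(\De A_iB''C'')}$ back to $\|v\|_{L^2(\De A_iB'C')}\le\|v\|_{L^2(K_i)}$, where $B'C'$ is parallel to $\Ga_K^h$ at distance $\de$ on the near side. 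The ratio parameter in Lemma~\ref{lem:2.3} is then $2\de/(d_i+\de)$, yielding $\mathsf{T}\bigl(\tfrac{d_i+3\de}{d_i-\de}\bigr)^{2p+3/2}\le\mathsf{T}\bigl(\tfrac{1+3\eta_K}{1-\eta_K}\bigr)^{2p+3/2}$; this is precisely where the $1+3\eta_K$ in the statement originates. The final $Cp$ (rather than $Cp^2$) then comes from two independent places: Lemma~\ref{lem:2.6} contributes $p$ directly for the polygonal trace $\|v\|_{L^2(\pa K_i^h)}$, and the multiplicative term $\|v\|_{L^2(K_i)}^{1/2}\|v\|_{H^1(K_i)}^{1/2}$ halves the $p^2$ from the gradient inverse estimate. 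A smaller point: your triangles $\De\subset K_i$ for the straight sides need not exist when $\Ga_K$ bulges into $K_i^h$; the paper instead bounds $\|v\|_{L^2(\pa K_i\setminus\bar\Ga_K)}\le\|v\|_{L^2(\pa K_i^h)}$ and then $\|v\|_{L^2(K_i^h)}\le C\,\mathsf{T}\bigl(\tfrac{1+\eta_K}{1-\eta_K}\bigr)^{2p+3/2}\|v\|_{L^2(K_i)}$ via the same inner-triangle device.
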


\begin{proof} We only prove the case when $K_i=K\cap\Om_i$ is a curved trapezoid (see Figure \ref{fig:2.3}). The other cases can be proved similarly. Let $K_i^h$ be the trapezoid $A_iBCD$ which replaces $\Ga_K$ by the straight segment $\Ga_K^h$, where 
$A_i$ is the vertex of $K$ in $\Om_i$ having the maximum distance to $\Ga_K^h$, $B,C$ are the end points of $\Ga^h_K$ with $C$ on the side of $K$ opposite to $A_i$, and $D$ the other vertex of $K$ in $\Om_i$ (see Figure \ref{fig:2.3}). As $K$ is large with respect to $\Om_i$, the triangles $\De A_iBC, \De A_iCD$ are shape regular with the shape regular constant depending possibly on $\de_0$ in Definition \ref{def:2.1}. By Lemma \ref{lem:2.5} and using Lemma \ref{lem:2.6} in each triangle $\De A_iBC,\De A_iCD$ we obtain
\be\label{b1}
\|v\|_{L^2(\pa K_i)}&\le&C\|v\|_{L^2(K_i)}^{1/2}\|v\|_{H^1(K_i)}^{1/2}+\|v\|_{L^2(\pa K_i^h)}\nonumber\\
&\le&C\|v\|_{L^2(K_i)}^{1/2}\|v\|_{H^1(K_i)}^{1/2}+Cph_K^{-1/2}\|v\|_{L^2(K_i^h)}.
\ee
Let $\de=\dist(\Ga_K,\Ga_K^h)$ and $d_i=\dist(A_i,\Ga_K^h)$. Then the interface deviation $\eta_K\ge\de/d_i$ by Definition \ref{def:2.2}. Let $\De A_iB'C'\subset\De ABC\subset\De A_iB''C''$ such that $B'C', B''C''$ are parallel to $\Ga_K^h$ and the distances of $B'C',B''C''$ to $\Ga_K^h$ are $\de$. $B',C'$ are respectively on the segments $A_iB,A_iC$ and $B'',C''$ are respectively on the extended lines of $A_iB, A_iC$. Let $D'$ on $AD$ such that $D'C'$ is parallel to $DC$, see Figure \ref{fig:2.3}. It is clear that $\De A_iC'D'\subset K_i$ and
$\frac{|C'D'|}{|CD|}=\frac{|A_iC'|}{|A_iC|}=\frac{d_i-\de}{d_i}$. Thus $\frac{|DD'|}{|A_iD|}=\frac\de{d_i}\le\eta_K$.

Since $K^h_i=(\De A_iCD)\cup(\De A_iBC)$ and $\De A_iC'D', \De A_iB'C'\subset K_i$, we obtain by using Lemma \ref{lem:2.3} that
\be\label{b2}
||v\|_{L^2(K_i^h)}&\le&\|v\|_{L^2(\De A_iCD)}+\|v\|_{L^2(\De A_iBC)}\nn\\
&\le&\mathsf{T}\left(\frac{1+\eta_K}{1-\eta_K}\right)^{2p+3/2}(\|v\|_{L^2(\De A_iC'D')}+\|v\|_{L^2(\De A_iB'C')})\nn\\
&\le&C\mathsf{T}\left(\frac{1+\eta_K}{1-\eta_K}\right)^{2p+3/2}\|v\|_{L^2(K_i)}.
\ee
Since $K_i\subset (\De A_iCD)\cup (\De A_iB''C'')$, by the inverse estimate for $hp$ finite element method (cf., e.g., Schwab \cite[Theorem 4.76]{Schwab}), we have
\be
\|\na v\|_{L^2(K_i)}
&\le&\|\na v\|_{L^2(\De A_iCD)}+\|\na v\|_{L^2(\De A_iB''C'')}\nn\\
&\le&Cp^2h_K^{-1}\|v\|_{L^2(\De A_iCD)}+Cp^2h_K^{-1}\|v\|_{L^2(\De A_iB''C'')}.\label{ll2}
\ee
On the other hand, by using Lemma \ref{lem:2.3} again,
\ben
\|v\|_{L^2(\De A_iCD)}&\le&\mathsf{T}\left(\frac{1+\eta_K}{1-\eta_K}\right)^{2p+3/2}\|v\|_{L^2(\De A_iC'D')},\\
\|v\|_{L^2(\De A_iB''C'')}&\le&\mathsf{T}\left(\frac{1+2\de(d_i+\de)^{-1}}{1-2\de(d_i+\de)^{-1}}\right)^{2p+3/2}\|v\|_{L^2(\De A_iBC)}\\
&\le&\mathsf{T}\left(\frac{1+3\eta_K}{1-\eta_K}\right)^{2p+3/2}\|v\|_{L^2(K_i)}.
\een
Inserting these two estimates to \eqref{ll2}, we obtain
\be
\|\na v\|_{L^2(K_i)}\le Cp^2h_K^{-1}\mathsf{T}\left(\frac{1+3\eta_K}{1-\eta_K}\right)^{2p+3/2}\|v\|_{L^2(K_i)}.
\ee
This, together with \eqref{b1}-\eqref{b2}, completes the proof. $\Box$
\end{proof}

We remark that various interface resolving mesh conditions have been made in the literature to obtain the inverse trace inequality in Lemma \ref{lem:2.7}, which is crucial in establishing the stability of unfitted finite element methods. For example, it is assumed in Massjung \cite{Massjung}, Wu and Xiao \cite{Wu} that each local interface $\Ga_K$, $K\in\cM$, is star shaped with respect to some point in $\Om_i$, which allows for the use of a local polar coordinate system.

To proceed, we define the interface penalty function $\al\in L^\infty(\cE)$:
\be\label{xx0}
\al|_e=\al_0\hat a_e\hat\Theta_eh_e^{-1}p^2\ \ \ \ \forall e\in\cE,
\ee
where $\al_0>0$ is some fixed constant
which is taken to be $1$ in all our numerical examples, and
\ben
\hat a_e=\max\{a_K:e\cap\bar K\not=\emptyset\},\ \ \hat\Theta_e=\max\{\Theta_K:e\cap\bar K\not=\emptyset\},\een
with
\be\label{e1}
a_K=\left\{\begin{array}{ll}
\frac{a_1+a_2}2 &\mbox{if }K\in\cM^\Ga,\\
a_i & \mbox{if }K\subset\Om_i.
\end{array}\right.,\ \ \Theta_K=\left\{\begin{array}{ll}
\mathsf{T}\left(\frac{1+3\eta_K}{1-\eta_K}\right)^{4p} &\mbox{if }K\in\cM^\Ga,\\
1 & {\rm otherwise}.
\end{array}\right.
\ee
Here $\mathsf{T}(t)=t+\sqrt{t^2-1}$, $\forall t\ge 1$. We remark that $\eta_K$ is the interface deviation of the interface in $K\in\cM$ defined in Definition \ref{def:2.2}, which is the only place that the geometry of the interface comes into our method. The mesh function $h|_e=(h_{K}+h_{K'})/2$ if $e=\pa K\cap \pa K'\in\cE^{\rm side}$ and $h|_e=h_K$ if $e=K\cap\Ga\in\cE^\Ga$ or $e=\pa K\cap\pa\Om\in\cE^{\rm bdy}$ for some $K\in\cM$.

\begin{lemma}\label{lem:2.8}
We have $\|a^{1/2}\mathsf{L}(v)\|_\cM\le c_{\mathsf{L}}\|\al^{1/2}\lj v\rj \|_\cE\ \ \forall v\in\X_p(\cM)$ for some constant $c_{\mathsf{L}}>0$ independent of $p$, the mesh $\cM$, and the coefficient $a$.
\end{lemma}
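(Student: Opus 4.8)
The plan is to bound the lifting $\mathsf{L}(v)$ elementwise by testing the defining relation against a carefully chosen $r\in[\X_p(\cM)]^2$. Fix $v\in\X_p(\cM)$ and take $r=a\,\mathsf{L}(v)$ in the first identity of \eqref{ll1}; this gives $\|a^{1/2}\mathsf{L}(v)\|_\cM^2=\la\widehat{a\mathsf{L}(v)}\cdot n,\lj v\rj\ra_\cE$. Estimating the right-hand side by Cauchy--Schwarz on each $e\in\cE$,
\ben
\|a^{1/2}\mathsf{L}(v)\|_\cM^2\le\left(\sum_{e\in\cE}h_e\hat a_e^{-1}\hat\Theta_e^{-1}p^{-2}\|\widehat{a\mathsf{L}(v)}\cdot n\|_{L^2(e)}^2\right)^{1/2}\|\al^{1/2}\lj v\rj\|_\cE,
\een
so it suffices to control the first factor by $c_{\mathsf L}\|a^{1/2}\mathsf{L}(v)\|_\cM$. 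Since the numerical flux $\widehat{a\mathsf{L}(v)}|_e$ is a convex combination of the one-sided traces $(a\mathsf{L}(v))_-$ and $(a\mathsf{L}(v))_+$, it is enough to bound, for each $K\in\cM$ and each side or interface piece $e\subset\pa K$ (counting $e\in\cE^\Ga$ as $\Ga_K$), the quantity $h_e\hat a_e^{-1}\hat\Theta_e^{-1}p^{-2}\|(a\mathsf{L}(v))|_e\|_{L^2(e)}^2$ by $C\|a^{1/2}\mathsf{L}(v)\|_{L^2(K)}^2$, and then sum over the boundedly many elements sharing each face.

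The core estimate is therefore a trace inequality for the polynomial vector field $a\mathsf{L}(v)|_K\in[Q_p(K)]^2$ (piecewise $Q_p$ across $\Ga_K$). For a side $e$ with $e\cap\bar K\neq\emptyset$ lying on an axis-parallel edge of $K$, if $K\subset\Om_i$ then $a$ is constant on $K$ and the standard $hp$ inverse trace inequality on the rectangle gives $\|(a\mathsf L(v))|_e\|_{L^2(e)}^2\le Cp^2h_K^{-1}\|a\mathsf L(v)\|_{L^2(K)}^2=Cp^2h_K^{-1}a_i\|a^{1/2}\mathsf L(v)\|_{L^2(K)}^2$; since $\hat a_e\ge a_i=a_K$, $\hat\Theta_e\ge1$, and $h_e\simeq h_K$, the claimed bound follows with no $\eta$-dependence. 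If $K\in\cM^\Ga$, I would first split $e$ (or $\Ga_K$) into the parts lying in $K_1$ and $K_2$ and apply Lemma~\ref{lem:2.7} on each $K_i$: this yields $\|(a\mathsf L(v))|_e\|_{L^2(e)}^2\le Cp^2h_K^{-1}\mathsf T\!\left(\frac{1+3\eta_K}{1-\eta_K}\right)^{4p}\sum_{i}a_i^2\|\mathsf L(v)\|_{L^2(K_i)}^2$. Using $a_i\le a_1+a_2=2a_K$ and recognizing $\mathsf T\!\left(\frac{1+3\eta_K}{1-\eta_K}\right)^{4p}=\Theta_K\le\hat\Theta_e$ from \eqref{e1}, together with $\sum_i a_i\|\mathsf L(v)\|_{L^2(K_i)}^2=\|a^{1/2}\mathsf L(v)\|_{L^2(K)}^2$, we again absorb the factor $h_e\hat a_e^{-1}\hat\Theta_e^{-1}p^{-2}$ and obtain the desired elementwise bound with a constant independent of $p$, $\cM$, and $a$.

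Summing the elementwise bounds over the (uniformly bounded number of) elements adjacent to each $e\in\cE$ gives $\sum_{e\in\cE}h_e\hat a_e^{-1}\hat\Theta_e^{-1}p^{-2}\|\widehat{a\mathsf L(v)}\cdot n\|_{L^2(e)}^2\le C\|a^{1/2}\mathsf L(v)\|_\cM^2$, and substituting back yields $\|a^{1/2}\mathsf L(v)\|_\cM^2\le C\|a^{1/2}\mathsf L(v)\|_\cM\,\|\al^{1/2}\lj v\rj\|_\cE$; dividing through (trivial if $\mathsf L(v)=0$) gives the lemma with $c_{\mathsf L}=C$. The main obstacle is the interface-element trace estimate: one must make sure the geometry-dependent blow-up produced by Lemma~\ref{lem:2.7} is exactly matched by the factor $\hat\Theta_e$ built into the penalty $\al$ in \eqref{xx0}--\eqref{e1}, and that the coefficient jump is handled by the averaging $a_K=(a_1+a_2)/2$ on interface elements so that no dependence on $a_1/a_2$ survives; the bounded-overlap counting for hanging nodes is routine given the $K$-mesh structure.
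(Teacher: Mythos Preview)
Your proposal is correct and follows essentially the same approach as the paper: test \eqref{ll1} with $r=a\mathsf{L}(v)$, apply Cauchy--Schwarz edgewise with the weight $\al^{-1/2}$, and then control the resulting trace sum by Lemma~\ref{lem:2.7} on interface elements (and the standard $hp$ inverse trace inequality, cf.\ Lemma~\ref{lem:2.6}, on the others), exploiting that the factors $\hat a_e$ and $\hat\Theta_e$ in $\al$ exactly absorb the coefficient and the curved-domain constant. One minor remark: you invoke the $K$-mesh structure for the overlap counting, but Assumption~(H3) is not needed here---each $e\in\cE$ is shared by at most two elements, and Lemma~\ref{lem:2.7} bounds the full $L^2(\partial K_i)$ norm (which already contains $\Ga_K$), so the passage from $\sum_{e\in\cE}$ to $\sum_{K\in\cM}\sum_{i=1}^2\|\cdot\|_{L^2(\partial K_i)}^2$ is immediate.
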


\begin{proof} By taking $r=a\mathsf{L}(v)$ in \eqref{ll1}, we have
\ben
\|a^{1/2}\mathsf{L}(v)\|_\cM^2&\le&\|\al^{-1/2}\widehat{a\mathsf{L}(v)}\|_\cE\|\al^{1/2}\lj v\rj\|_\cE\\
&\le&C\left(\sum_{e\in\cE}\|\hat\Theta_e^{-1/2}h_e^{1/2}p^{-1}\widehat{a^{1/2}\mathsf{L}(v)}\|_{L^2(e)}^2\right)^{1/2}
\|\al^{1/2}\lj v\rj\|_\cE\\
&\le&C\left(\sum_{K\in\cM}\sum_{i=1}^2\|\Theta_K^{-1/2}h_K^{1/2}p^{-1}(a^{1/2}\mathsf{L}(v))\|_{L^2(\pa K_i)}^2\right)^{1/2}\|\al^{1/2}\lj v\rj\|_\cE\\
&\le&C\|a^{1/2}\mathsf{L}(v)\|_\cM\|\al^{1/2}\lj v\rj\|_\cE,
\een
where we have used Lemma \ref{lem:2.7} in the interface elements and a scaled version of Lemma \ref{lem:2.6} for the elements not intersecting the interface. This completes the proof. $\Box$
\end{proof}

For any $v\in H^1(\cM)$, we define the DG norm
\ben
\|v\|_{\rm DG}^2=\|a^{1/2}\na_h v\|_{\cM}^2+\|\al^{1/2}\lj v\rj\|_{\bar{\cE}}^2,
\een
where $\|\al^{1/2}\lj v\rj\|_{\bar\cE}^2:=\la\al\lj v\rj,\lj v\rj\ra_{\bar\cE}$. By \eqref{xx}, we know that
\be
\|\al^{1/2}\lj v\rj\|_{\bar\cE}^2
&=&\sum^2_{i=1}\sum_{e\in\cE_i^{\rm side}}\|\al^{1/2}\lj v_i\rj\|^2_{L^2(e)}+\|\al^{1/2}\lj v\rj\|_{\cE^{\Ga}\cup\cE^{\rm bdy}}^2\nn\\
&\ge&\|\al^{1/2}\lj v\rj \|_{\cE}^2.\label{xx1}
\ee

\begin{theorem}\label{thm:2.1}
We have $a_h(v,v)\ge (4+c_{\mathsf{L}}^2)^{-1}\|v\|_{\rm DG}^2\ \ \forall v\in\mathbb{X}_p(\cM)$, where $c_{\mathsf{L}}>0$ is the constant in Lemma \ref{lem:2.8}.

\end{theorem}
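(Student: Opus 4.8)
The plan is to expand $a_h(v,v)$ using the definition of the bilinear form and control the cross term coming from the lifting operator $\mathsf{L}(v)$ by means of Lemma~\ref{lem:2.8} and Young's inequality. Writing
\[
a_h(v,v)=\|a^{1/2}(\na_h v-\mathsf{L}(v))\|_\cM^2+\|\al^{1/2}\lj v\rj\|_{\bar\cE}^2,
\]
I would first expand the square as
\[
\|a^{1/2}(\na_h v-\mathsf{L}(v))\|_\cM^2=\|a^{1/2}\na_h v\|_\cM^2-2(a\na_h v,\mathsf{L}(v))_\cM+\|a^{1/2}\mathsf{L}(v)\|_\cM^2,
\]
and then bound the middle (cross) term via Cauchy--Schwarz and Young: for any $\theta>0$,
\[
2|(a\na_h v,\mathsf{L}(v))_\cM|\le \theta\|a^{1/2}\na_h v\|_\cM^2+\theta^{-1}\|a^{1/2}\mathsf{L}(v)\|_\cM^2.
\]

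Next I would invoke Lemma~\ref{lem:2.8}, which gives $\|a^{1/2}\mathsf{L}(v)\|_\cM^2\le c_{\mathsf{L}}^2\|\al^{1/2}\lj v\rj\|_\cE^2$, together with the elementary inequality \eqref{xx1}, namely $\|\al^{1/2}\lj v\rj\|_\cE^2\le\|\al^{1/2}\lj v\rj\|_{\bar\cE}^2$, so that both the $+\|a^{1/2}\mathsf{L}(v)\|_\cM^2$ term and the $\theta^{-1}\|a^{1/2}\mathsf{L}(v)\|_\cM^2$ contribution are absorbed into the penalty norm $\|\al^{1/2}\lj v\rj\|_{\bar\cE}^2$, at the cost of a factor proportional to $c_{\mathsf{L}}^2$. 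Collecting terms yields
\[
a_h(v,v)\ge (1-\theta)\|a^{1/2}\na_h v\|_\cM^2+\bigl(1-(1+\theta^{-1})c_{\mathsf{L}}^2\bigr)\|\al^{1/2}\lj v\rj\|_{\bar\cE}^2 .
\]
Wait---the coefficient of the penalty term can be negative, so one cannot simply drop the $+\|a^{1/2}\mathsf{L}(v)\|_\cM^2$ term; instead I would keep it and note it is nonnegative, discarding only after it has done no harm, i.e. estimate
\[
a_h(v,v)\ge (1-\theta)\|a^{1/2}\na_h v\|_\cM^2+\|\al^{1/2}\lj v\rj\|_{\bar\cE}^2-\theta^{-1}c_{\mathsf{L}}^2\|\al^{1/2}\lj v\rj\|_{\bar\cE}^2 .
\]
Choosing $\theta=1/2$ gives $a_h(v,v)\ge\tfrac12\|a^{1/2}\na_h v\|_\cM^2+(1-2c_{\mathsf{L}}^2)\|\al^{1/2}\lj v\rj\|_{\bar\cE}^2$; this is not quite the stated constant, which suggests the intended route is slightly different---probably one should \emph{not} split off the positive $\|a^{1/2}\mathsf{L}(v)\|_\cM^2$ term but rather use $(x-y)^2\ge \tfrac12 x^2-y^2$ directly on $\|a^{1/2}(\na_h v-\mathsf{L}(v))\|_\cM^2$, obtaining $a_h(v,v)\ge\tfrac12\|a^{1/2}\na_h v\|_\cM^2-\|a^{1/2}\mathsf{L}(v)\|_\cM^2+\|\al^{1/2}\lj v\rj\|_{\bar\cE}^2\ge \tfrac12\|a^{1/2}\na_h v\|_\cM^2+(1-c_{\mathsf{L}}^2)\|\al^{1/2}\lj v\rj\|_{\bar\cE}^2$.

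To land exactly on the factor $(4+c_{\mathsf{L}}^2)^{-1}$, the cleanest device is to bound $\|v\|_{\rm DG}^2$ from above by $a_h(v,v)$ plus lifting terms. Specifically, starting from $\|a^{1/2}\na_h v\|_\cM\le \|a^{1/2}(\na_h v-\mathsf{L}(v))\|_\cM+\|a^{1/2}\mathsf{L}(v)\|_\cM$, squaring with $2ab\le a^2+b^2$ in the weighted form $(a+b)^2\le 2a^2+2b^2$ yields $\|a^{1/2}\na_h v\|_\cM^2\le 2\|a^{1/2}(\na_h v-\mathsf{L}(v))\|_\cM^2+2c_{\mathsf{L}}^2\|\al^{1/2}\lj v\rj\|_\cE^2$; hence, using \eqref{xx1},
\[
\|v\|_{\rm DG}^2=\|a^{1/2}\na_h v\|_\cM^2+\|\al^{1/2}\lj v\rj\|_{\bar\cE}^2\le 2\|a^{1/2}(\na_h v-\mathsf{L}(v))\|_\cM^2+(1+2c_{\mathsf{L}}^2)\|\al^{1/2}\lj v\rj\|_{\bar\cE}^2\le (2+2c_{\mathsf{L}}^2)\,a_h(v,v)+\text{(nothing extra)}.
\]
The main obstacle, and the step I would be most careful with, is getting the constants to match: one must track whether the $\|\al^{1/2}\lj v\rj\|$-term is measured over $\cE$ or over $\bar\cE$ at each step (Lemma~\ref{lem:2.8} gives $\cE$, the DG norm uses $\bar\cE$, and \eqref{xx1} bridges them only in one direction), and choose the Young parameter so that the factor $2$ in $(a+b)^2\le 2a^2+2b^2$ and the $a_h$-coefficient $1$ on the penalty combine to give precisely $4+c_{\mathsf{L}}^2$ rather than something weaker; in the write-up I would probably replace the crude $(a+b)^2\le 2a^2+2b^2$ by $(a+b)^2\le (1+\kappa)a^2+(1+\kappa^{-1})b^2$ and optimize $\kappa$ at the end. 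The rest is routine algebra, and no properties of the interface geometry beyond what is already encoded in Lemma~\ref{lem:2.8} are needed.
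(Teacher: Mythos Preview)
Your final ``reverse'' route---bound $\|v\|_{\rm DG}^2$ from above by a constant times $a_h(v,v)$ via the triangle inequality $\|a^{1/2}\na_h v\|_\cM\le\|a^{1/2}(\na_h v-\mathsf{L}(v))\|_\cM+\|a^{1/2}\mathsf{L}(v)\|_\cM$, Lemma~\ref{lem:2.8}, and \eqref{xx1}---is correct and, once you use the parametrized inequality $(a+b)^2\le(1+\kappa)a^2+(1+\kappa^{-1})b^2$ and equalize coefficients, gives $\|v\|_{\rm DG}^2\le C\,a_h(v,v)$ with $C=\tfrac12\big(2+c_{\mathsf{L}}^2+c_{\mathsf{L}}\sqrt{c_{\mathsf{L}}^2+4}\big)$, which one checks is $\le 4+c_{\mathsf{L}}^2$ for every $c_{\mathsf{L}}>0$. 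So the argument closes.

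This is, however, not how the paper proceeds. The paper works \emph{forward} from the expansion $a_h(v,v)=\|a^{1/2}\na_h v\|_\cM^2-2(a\na_h v,\mathsf{L}(v))_\cM+\|a^{1/2}\mathsf{L}(v)\|_\cM^2+\|\al^{1/2}\lj v\rj\|_{\bar\cE}^2$: it splits off a fraction $\de_1$ of the penalty, converts the remaining $(1-\de_1)$ fraction back into an extra $\|a^{1/2}\mathsf{L}(v)\|_\cM^2$ via Lemma~\ref{lem:2.8} and \eqref{xx1} (so the lifting term now has coefficient $1+(1-\de_1)c_{\mathsf{L}}^{-2}$), and then applies the elementary inequality $a^2-2ab+(1+\eps)b^2\ge\frac{\eps}{1+\eps}a^2$ to the quadratic in $\|a^{1/2}\na_h v\|_\cM$ and $\|a^{1/2}\mathsf{L}(v)\|_\cM$. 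Optimizing $\de_1$ yields exactly the same constant $C$ as your approach and the stated lower bound $\de_1\ge(4+c_{\mathsf{L}}^2)^{-1}$. The paper's device keeps the positive $\|a^{1/2}\mathsf{L}(v)\|_\cM^2$ term rather than discarding it---precisely the step your first two attempts were struggling with---and is the more common idiom in LDG coercivity proofs; your reverse argument is a shade more elementary (no square-completion identity needed) but requires you to carry the $\kappa$-optimization through explicitly to match the stated constant. The earlier attempts in your proposal that produce coefficients like $1-c_{\mathsf{L}}^2$ or $1-2c_{\mathsf{L}}^2$ should be discarded, since $c_{\mathsf{L}}$ is not assumed small.
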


\begin{proof} The argument is standard. For any $\de_1\in (0,1)$,  by Lemma \ref{lem:2.8} and \eqref{xx1} we have
\ben
a_h(v,v)&=&\|a^{1/2}\na_h v\|_\cM^2+\|a^{1/2}\mathsf{L}(v)\|_\cM^2-2(a\na_h v,\mathsf{L}(v))_\cM+\|\al^{1/2}\lj v\rj \|_{\bar{\cal{E}}}^2\\
&\ge&\|a^{1/2}\na_h v\|_\cM^2+(1+(1-\de_1)c_{\mathsf{L}}^{-2})\|a^{1/2}\mathsf{L}(v)\|_\cM^2-2(a\na_h v,\mathsf{L}(v))_\cM\\
&+&\de_1\|\al^{1/2}\lj v\rj \|_{\bar\cE}^2.
\een
By the elementary inequality $a^2-2ab+(1+\eps)b^2\ge\frac \eps{1+\eps}a^2$ $\ \ \forall a,b>0,\eps>0$,
we obtain
\ben
a_h(v,v)\ge\frac{(1-\de_1)c_{\mathsf{L}}^{-2}}{1+(1-\de_1)c_{\mathsf{L}}^{-2}}\|a^{1/2}\na_h v\|_\cM^2+
\de_1\|\al^{1/2}\lj v\rj \|_{\bar\cE}^2.
\een
This completes the proof by choosing $\de_1=\frac{\sqrt{1+4c_{\mathsf{L}}^{-2}}-1}{\sqrt{1+4c_{\mathsf{L}}^{-2}}+1}$ to make the coefficients in the above inequality equal and noticing that $\de_1\ge (4+c_{\mathsf{L}}^{2})^{-1}$. $\Box$
\end{proof}



The following a priori error estimate can be proved by using Theorem \ref{thm:2.1}, the classical $hp$-interpolation error estimate in Babu$\check{\rm s}$ka and Suri \cite[Lemma 4.5]{Babuska87b}, and the argument in \cite{Perugia}, \cite{Wu}. Here we omit the details.

\begin{theorem}\label{thm:2.2}
Let the solution of the problem \eqref{p1}-\eqref{p3} $u\in H^{k}(\Om_1\cup\Om_2)$, $k\ge 2$. Let $U\in\X_p(\cM)$ be the solution of \eqref{a2}. Then there exists a constant $C$ independent of $p$, the mesh $\cM$, and the coefficient $a$ such that
\ben
\|u-U\|_{\rm DG}\le C\max_{e\in\cE}|\al|_e|^{1/2}\,\frac{h^{\min(p+1,k)-1}}{p^{k-3/2}}\sum^{2}_{i=1}\|a_i^{1/2}\tilde u_i\|_{H^k(\Om)}.
\een
Here $h=\max_{K\in\cM}h_K$ and $\tilde u_i\in H^{k}(\Om)$ is the Stein extension \cite[P.154]{Adams} of $u_i\in H^{k}(\Om_i)$ for Lipschitz domains satisfying $\|\tilde u_i\|_{H^{k}(\Om)}\le C\|u_i\|_{H^{k}(\Om_i)}$, $i=1,2$.
\end{theorem}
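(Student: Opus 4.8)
The plan is to follow the standard C\'ea-type argument for LDG methods, adapted to the unfitted setting. First I would establish Galerkin-type quasi-optimality: using the coercivity from Theorem \ref{thm:2.1} together with a suitable boundedness/consistency property of $a_h$, I would show that for any $v\in\X_p(\cM)$,
\ben
\|u-U\|_{\rm DG}\le C\inf_{v\in\X_p(\cM)}\bigl(\|u-v\|_{\rm DG}+\mbox{(lifting and flux terms)}\bigr).
\een
Here one must be careful because the natural DG norm does not control the liftings $\mathsf{L}(v)$, so the argument of \cite{Perugia}, \cite{Wu} enlarges the norm on the continuous side by a term measuring the jumps weighted through $\al$; since $u$ is continuous across all element sides and across $\Ga$ (by \eqref{p2}), the relevant jumps of $u-v$ reduce to jumps of the interpolation error $v-u$, which are controlled by trace inequalities on the elements. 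I would use Lemma \ref{lem:2.8} to absorb $\|a^{1/2}\mathsf{L}(v)\|_\cM$ by $\|\al^{1/2}\lj v\rj\|_\cE$, and the multiplicative trace inequality \eqref{m1} together with Lemma \ref{lem:2.7} (and a scaled Lemma \ref{lem:2.6} on non-interface elements) to bound the edge norms of the interpolation error by its volume norms.

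Second, I would choose $v=\Pi u$ where $\Pi u=(\Pi_1\tilde u_1)\chi_{\Om_1}+(\Pi_2\tilde u_2)\chi_{\Om_2}$ and $\Pi_i$ is the elementwise $hp$-interpolation operator of Babu\v{s}ka and Suri applied to the Stein extension $\tilde u_i\in H^k(\Om)$ of $u_i$. On each $K\in\cM$, \cite[Lemma 4.5]{Babuska87b} gives
\ben
\|\tilde u_i-\Pi_i\tilde u_i\|_{L^2(K)}+h_K\|\na(\tilde u_i-\Pi_i\tilde u_i)\|_{L^2(K)}\le C\,\frac{h_K^{\min(p+1,k)}}{p^{k}}\|\tilde u_i\|_{H^k(K)},
\een
and a corresponding estimate with the $H^k$ seminorm and the sharp $h$- and $p$-powers for the $H^1$-seminorm of the error. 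Summing over $K\in\cM$, using $\|\cdot\|_{H^k(K)}$ patch-wise and the finite overlap of patches, and combining with the edge-norm bounds from step one (which contribute the extra $p$-powers through $\al$ and through Lemma \ref{lem:2.6}), I would collect the powers of $h$ and $p$ into the stated form $\max_e|\al|_e|^{1/2}h^{\min(p+1,k)-1}p^{-(k-3/2)}$, and finally invoke the Stein extension bound $\|\tilde u_i\|_{H^k(\Om)}\le C\|u_i\|_{H^k(\Om_i)}$ to replace $\|\tilde u_i\|_{H^k(\Om)}$ by the data on $\Om_i$ only where needed; the factor $a_i^{1/2}$ rides along from the weighting in the DG norm.

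The main obstacle, and the reason the details are omitted in the paper, is the careful bookkeeping of the $p$-dependence: one must track how the inverse trace constant $\sim p\,\mathsf{T}((1+3\eta_K)/(1-\eta_K))^{2p}$ from Lemma \ref{lem:2.7} interacts with the penalty weight $\al\sim\al_0\hat a_e\hat\Theta_e h_e^{-1}p^2$ (note $\Theta_K=\mathsf{T}((1+3\eta_K)/(1-\eta_K))^{4p}$ is designed precisely so that $\Theta_K^{1/2}$ cancels the square of the $\mathsf{T}$-factor from Lemma \ref{lem:2.7}), so that after the dust settles the only surviving interface-geometry dependence is the harmless $\max_e|\al|_e|^{1/2}$ prefactor. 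The rest is the routine $hp$-interpolation estimate plus the already-proven coercivity; because each piece is either Theorem \ref{thm:2.1}, Lemma \ref{lem:2.7}, Lemma \ref{lem:2.8}, \eqref{m1}, or \cite[Lemma 4.5]{Babuska87b}, I expect no genuinely new difficulty beyond this accounting, which is why presenting it in full would only reproduce the by-now standard LDG analysis of \cite{Perugia}, \cite{Wu}.
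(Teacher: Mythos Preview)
Your proposal is correct and follows essentially the same approach the paper indicates: the paper itself omits the proof, stating only that it follows from Theorem~\ref{thm:2.1}, the $hp$-interpolation estimate of Babu\v{s}ka--Suri \cite[Lemma 4.5]{Babuska87b}, and the LDG argument in \cite{Perugia}, \cite{Wu}, which is precisely the skeleton you have fleshed out. Your identification of Lemma~\ref{lem:2.7}, Lemma~\ref{lem:2.8}, \eqref{m1}, the Stein extension, and the role of $\Theta_K$ in absorbing the interface-geometry factors is accurate and consistent with the paper's setup.
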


We remark that the error estimate is slightly sub-optimal in $p$ which is typical for discontinuous Galerkin methods (see e.g., Georgoulis, Hall and Melenk \cite{Georgoulis}). However, $hp$-optimal error estimates can be proved in some special cases for discontinuous Galerkin methods for Possion problem on $1$-irregular meshes (each side containing at most 1 hanging node), see Stamm and Wihler \cite{Stamm}.

\section{A posteriori error estimation: reliability}\label{sec:3}

We start by introducing some further notation. We assume the elements in $\cT$ are obtained by local successive quad-refinements of some conforming initial mesh $\cT_0$. A quad-refinement of an element consists of subdividing the element into four congruent rectangles.

Let $\cN^0$ be the set of conforming nodes of the induced mesh $\cM$ from $\cT$ such that each element $K\in\cM$ is large with respect to
both $\Om_1,\Om_2$ and satisfies \eqref{HH}.
A node is called conforming if it either locates on the boundary or is shared by the four elements to which it belongs. For each conforming node $P$, we define $\psi_P\in\mathbb{X}_1(\cM)\cap H^1(\Om)$, which is bilinear in each element and satisfies $\psi_P(Q)=\delta_{PQ}$ for any $Q\in\cN^0$. Here $\delta_{PQ}$ is the Kronecker delta. It is proved in Babu\v{s}ka and Miller \cite{Babuska87a} that $\{\psi_P:P\in\cN^0\}$ consists of a basis of $\X_1(\cM)\cap H^1(\Om)$ and satisfies the property of the partition of unity
\ben
\sum_{P\in\cN^0}\psi_P=1.
\een
We impose the following assumption on the finite element mesh which is first introduced in Babu\v{s}ka and Miller \cite{Babuska87a} as the $K$-mesh (see Figure \ref{fig:3.0}).

\noindent\\
{\bf Assumption (H3)} There exists a constant $C>0$ uniform on the level of discretization of $\cM$ such that for any conforming node $P\in\cN^0$,
\be\label{m2}
\diam({\rm supp}(\psi_P))\le C\min_{K\in\cM_P}h_K,
\ee
where $\cM_P:=\{K\in\cM,\,\,K\subset{\rm supp}(\psi_P)\}$.

\bigskip
\begin{figure}[t]
	\centering
	\includegraphics[width=0.35\textwidth]{./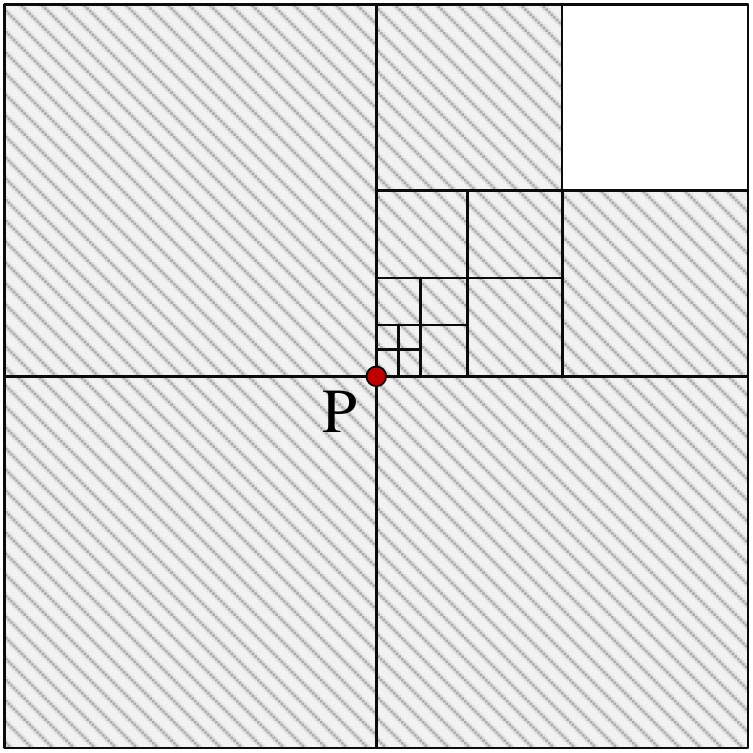}\quad
	\includegraphics[width=0.35\textwidth]{./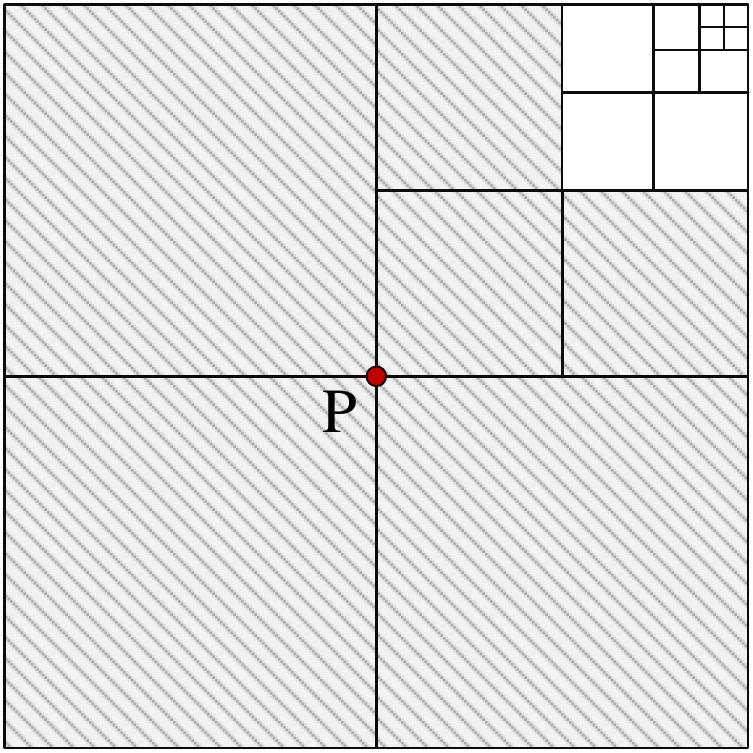}
	\caption{The left mesh is not a $K$-mesh if it refines close to $P$. The right mesh is a $K$-mesh if it
	refines close to upper-right corner. The shadow region is the support of $\psi_P$.}
	\label{fig:3.0}
\end{figure}
We refer to \cite[\S 1.4]{Babuska87a} for further properties of $K$-meshes and Bonito and Nochetto \cite[\S 6]{Bonito} for a refinement algorithm to enforce the assumption (H3) in practical computations.

The a posteriori error analysis depends on a suitable quasi-interpolation operator. In Melenk \cite{Melenk05},  a Cl\'ement type $hp$-quasi-interpolation is constructed for conforming meshes. The following lemma shows that a similar construction leads to a $hp$-quasi-interpolation operator on $K$-meshes.

\begin{lemma}\label{lem:3.1}
Let $\V_p(\cM)=\Pi_{K\in\cM}Q_p(K)$. There exists a quasi-interpolation operator $\Pi_h:H^1_0(\Om)\to\mathbb{V}_p(\cM)\cap H^1_0(\Om)$ such that for any $v\in H^1_0(\Om)$,
\ben
& &\|D^m(v- \Pi_hv)\|_{L^2(K)}\le C(h_K/p)^{1-m}\|\na v\|_{L^2(\omega(K))},\ \ m=0,1,\\
& &\|v-\Pi_h v\|_{L^2(\pa K)}\le C(h_K/p)^{1/2}\|\na v\|_{L^2(\omega(K))}.
\een
Here for any $K\in\cM$, $\omega(K)$ is a union of a discrete set of elements including $K$ such that $\diam(\omega (K))\le Ch_K$. The constant $C$ is independent of $h_K,p$.
\end{lemma}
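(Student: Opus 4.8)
The plan is to construct $\Pi_h$ as a Cl\'ement-type operator adapted to the $K$-mesh and then verify the three stated bounds by localizing to the conforming partition of unity $\{\psi_P : P\in\cN^0\}$. First I would, for each conforming node $P\in\cN^0$, define a local $L^2$-type projection (or a Legendre-coefficient truncation) $\pi_P : L^2(\omega_P)\to Q_p$ onto polynomials of degree $p$ on the patch $\omega_P := {\rm supp}(\psi_P)$, and set
\ben
\Pi_h v := \sum_{P\in\cN^0}\psi_P\,\pi_P v.
\een
Because $\sum_P\psi_P=1$ and $\psi_P$ is continuous and piecewise bilinear vanishing on $\pa\Om$, $\Pi_h v$ lies in $C^0(\bar\Om)\cap H^1_0(\Om)$; and since on each $K\in\cM$ the product $\psi_P\pi_Pv$ is a polynomial of degree at most $p+1$ in each variable while we want $\V_p$, the standard fix (as in Melenk \cite{Melenk05}) is to take $\psi_P$ from $\X_1(\cM)$ and to absorb the extra degree by using $Q_{p}$ locally with $p$ replaced appropriately, or—cleaner—to define the operator directly so that on $K$ only the nodes $P$ with $K\subset\omega_P$ contribute and the resulting function, restricted to $K$, is recognized as an element of $Q_p(K)$ after noting each $\psi_P|_K$ is bilinear and the sum telescopes; I would follow Melenk's construction verbatim here and simply record that it produces values in $\V_p(\cM)\cap H^1_0(\Om)$.

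Next I would establish the approximation estimates. The key local tool is the $hp$ polynomial approximation result on a single element: for $w\in H^1(\omega_P)$ there is $\pi_P w\in Q_p$ with $\|w-\pi_P w\|_{L^2(\omega_P)}\le C(h_P/p)\|\na w\|_{L^2(\omega_P)}$ and $\|\na(w-\pi_P w)\|_{L^2(\omega_P)}\le C\|\na w\|_{L^2(\omega_P)}$, where $h_P=\diam(\omega_P)$; this is the scaled Cl\'ement/Babu\v ska–Suri estimate and may be invoked. On a fixed $K\in\cM$, write
\ben
v-\Pi_h v = \sum_{P:\,K\subset\omega_P}\psi_P\,(v-\pi_P v)
\een
using the partition of unity. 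Each term is bounded in $L^2(K)$ by $\|\psi_P\|_{L^\infty}\|v-\pi_P v\|_{L^2(\omega_P)}\le C(h_P/p)\|\na v\|_{L^2(\omega_P)}$, and for the gradient the product rule gives two pieces: $\psi_P\na(v-\pi_Pv)$, controlled by $\|\na(v-\pi_Pv)\|_{L^2(\omega_P)}\le C\|\na v\|_{L^2(\omega_P)}$, and $(\na\psi_P)(v-\pi_Pv)$, controlled using $\|\na\psi_P\|_{L^\infty}\le C/h_P$ (valid because $\psi_P$ is bilinear on each element of a patch whose elements all have comparable size) times $\|v-\pi_Pv\|_{L^2(\omega_P)}\le C(h_P/p)\|\na v\|_{L^2(\omega_P)}$, yielding $C p^{-1}\cdot\ldots$ which is absorbed. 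Summing over the boundedly many $P$ with $K\subset\omega_P$ and setting $\omega(K):=\bigcup\{\omega_P : K\subset\omega_P\}$ gives the $m=0,1$ bounds; the trace bound follows from the multiplicative trace inequality \eqref{m1} applied to $v-\Pi_h v$ on $K$ combined with the just-proved $L^2$ and $H^1$ estimates, which produces the $(h_K/p)^{1/2}\|\na v\|_{L^2(\omega(K))}$ scaling after a standard interpolation of the two rates.

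The main obstacle is the role of Assumption (H3), the $K$-mesh condition: it is exactly what guarantees that all elements of a patch $\omega_P$—which may include elements touching $P$ across hanging-node interfaces—have mutually comparable diameters, so that $h_P\le C\min_{K\in\cM_P}h_K$ and, conversely, $h_K\le C h_P$ for $K\subset\omega_P$; without this the constant $C/h_P$ in $\|\na\psi_P\|_{L^\infty}$ and the factor $(h_P/p)$ could not be traded against the local mesh size $h_K$ appearing in the statement, and $\diam(\omega(K))\le Ch_K$ would fail. I would therefore spell out carefully that (H3), together with the fact that $\cM$ comes from successive quad-refinements (so the level of neighboring elements differs by a bounded amount on a $K$-mesh), yields the finite-overlap property ($\#\{P : K\subset\omega_P\}\le C$) and the size-comparability of all elements within any $\omega(K)$; the rest is the routine Cl\'ement/Melenk bookkeeping. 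I do not expect the polynomial-degree tracking to cause trouble, since the estimates are stated in the sub-optimal-in-$p$ form that matches exactly the scaled Babu\v ska–Suri bounds.
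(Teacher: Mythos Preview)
Your overall strategy (partition-of-unity assembly of local polynomial approximants, then trace inequality for the boundary estimate) matches the paper's, but there is a genuine gap in the treatment of the homogeneous Dirichlet condition. You assert that $\psi_P$ ``vanishes on $\partial\Omega$'', which would force $\Pi_h v\in H^1_0(\Omega)$ for free; this is false. Since $\sum_{P\in\cN^0}\psi_P\equiv 1$ on $\bar\Omega$, the hat functions associated with boundary conforming nodes are nonzero on $\partial\Omega$, and with your definition $\Pi_h v$ takes the value $\sum_{P\in\partial\Omega}\psi_P\,\pi_P v$ on $\partial\Omega$, which is not zero in general. The $h$-version fix of setting $\pi_Pv=0$ at boundary nodes does not rescue the $hp$ estimate: a Friedrichs-type bound on the boundary patch only yields $\|v\|_{L^2(\omega_P)}\le C h_P\|\nabla v\|_{L^2(\omega_P)}$, with no factor $p^{-1}$, so the stated $(h_K/p)^{1-m}$ rate would be lost near $\partial\Omega$.

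The paper closes this gap exactly as in Melenk \cite{Melenk05}: for each boundary node $P$ one first applies the local polynomial approximation $J_Pv\in Q_{p-1}(S_P)$ on an enclosing rectangle $S_P$, then uses a polynomial \emph{lifting} to subtract off a correction $v_P$ with $v_P=J_Pv$ on $\partial\Omega\cap\bar S_P$ and with $(h_P/p)^{-1}\|v_P\|_{L^2}+\|\nabla v_P\|_{L^2}\le C\|\nabla v\|_{L^2(S_P)}$, so that $I_Pv:=J_Pv-v_P$ vanishes on $\partial\Omega$ while retaining the $hp$ approximation rate. Because the lifting produces polynomials of degree $4(p-1)$, the assembled interpolant lands in $\V_{4(p-1)+1}(\cM)\cap H^1_0(\Omega)$, and the final operator $\Pi_h$ is obtained by replacing $p$ with $\lfloor (p-1)/4\rfloor+1$; your hand-wave about the degree (``absorb the extra degree \ldots or follow Melenk's construction verbatim'') hides precisely this step. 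Your use of Assumption~(H3) for size-comparability and finite overlap is correct and is indeed what makes the Cl\'ement machinery go through on $K$-meshes.
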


\begin{figure}[t]
	\centering
	\includegraphics[width=0.8\textwidth]{./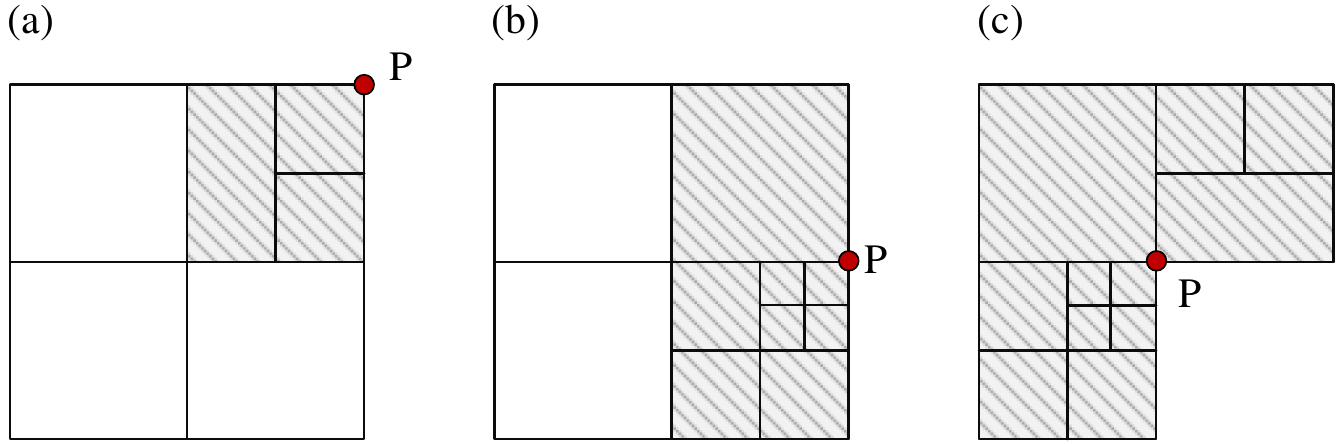}
	\caption{An example of $S_P$ with $P$ is the vertex of (a) one element, (b) two elements, and (c) three elements.}
	\label{fig:3.1}
\end{figure}

\begin{proof} The second estimate follows from the first one by the multiplicative trace inequality \eqref{m1}. We now describe how to construct the operator which satisfies the first estimate by the method in \cite{Melenk05}. For any $P\in\cN^0$, denote $\Om_P=({\rm supp}(\psi_P))^\circ$, the interior of ${\rm supp}(\psi_P)$, and $h_P=\diam(\Om_P)$. For any $v\in H^1_0(\Om)$, which is extended to be zero outside $\Om$, we  define
\be\label{m6}
I_hv=\sum_{P\in\cN^0}(I_Pv)\psi_P,
\ee
where $I_P:H^1_0(\Om)\to\V_{p-1}(\cM_{P})$, is defined by using local projection and polynomial lifting. More precisely, denote $S_P$ the rectangle centered at $P$ which includes $\Om_P$ and has minimum size. Let $J_P:H^1(S_P)\to Q_{p-1}(S_P)$ be the polynomial approximation operator on rectangles
in \cite[Theorem 5.1]{Melenk05} which satisfies
\be\label{m5}
\|D^m(v-J_P v)\|_{L^2(S_P)}\le C(h_P/p)^{1-m}\|\na v\|_{L^2(S_P)},\ \ m=0,1.
\ee
Notice that $J_Pv$ does not vanish on the boundary. Let $P\in\pa\Om\cap\cN^0$ and $\Ga_P=\pa\Om\cap \bar S_P$. Since $v=0$ on $\pa\Om$, we obtain from \eqref{m5} that
\ben
\|(h/p)^{-1/2}J_Pv\|_{L^2(\Ga_P)}+\|J_Pv\|_{H^{1/2}(\Ga_P)}\le C\|\na v\|_{L^2(S_P)}.
\een
We observe that if $P\in\pa\Om$ is the vertex of only one element or two elements, $S_P$ can be chosen to be inside $\Om$ (see Figure \ref{fig:3.1}).  Thus one can use the polynomial lifting theorem in \cite[Proposition 5.3]{Melenk05} to obtain a $v_P\in Q_{4(p-1)}(S_P)$ such that
\be\label{m7}
& &(h_P/p)^{-1}\|v_P\|_{L^2(S_P)}+\|\na v_P\|_{L^2(S_P)}\nn\\
&\le&C\|(h/p)^{-1/2}J_P v\|_{L^2(\Ga_P)}+C\|J_Pv\|_{H^{1/2}(\Ga_P)}\nn\\
&\le&C\|\na v\|_{L^2(S_P)}.
\ee
If $P\in\pa\Om$ is the vertex of three elements, then $S_P\cap\Om$ is the union of three rectangles $S_{P}^j$, $j=1,2,3$, such that each element in $\cM_P$ is included in one of these three elements
(see Figure \ref{fig:3.1}).
In this case, one can use the argument in \cite[Lemma 5.8]{Melenk05} to conclude that there exists a $v_P\in \left[\Pi_{j=1,2,3}Q_{4(p-1)}(S_P^j)\right]\cap H^1(S_P)$ such that \eqref{m7} is valid.

Now we define $I_Pv=J_Pv$ if $P\in\cN^0$ is an interior node and $I_Pv=J_Pv-v_P$ if $P\in\cN^0$ is a node on the boundary. By using the partition of unity \eqref{m2}, \eqref{m5} and \eqref{m7}, we obtain easily
\ben
\|D^m(v-I_hv)\|_{L^2(K)}\le C(h_K/p)^{1-m}\|\na v\|_{L^2(\omega(K))},\ \ m=0,1.
\een
Finally, since $I_hv\in \V_{4(p-1)+1}(\cM)\cap H^1_0(\Om)$, we define $\Pi_h$ by replacing $p$ in \eqref{m6} by $\lfloor (p-1)/4\rfloor+1$. This proves the lemma. $\Box$
\end{proof}

\begin{remark}\label{rem:3.1}
We know from the proof of Lemma \ref{lem:3.1} that for any $K\in\cM$,
\ben
\omega(K)=\{K'\in\cM:K'\subset S_P,\forall P\in\cN^0\ \mbox{such that }\psi_P|_K\not=0\}.
\een
\end{remark}

The following local smoothing operator on $K$-meshes extends the construction in Burman and Ern \cite{Burman07}, Houston, Sch\"otzau and Wihler \cite{Houston} for conforming meshes and Zhu and Sch\"otzau \cite{Zhu} for $1$-irregular meshes.

\begin{lemma}\label{lem:3.2}
There exists an interpolation operator $\pi_h:\V_p(\cM)\to\mathbb{V}_p(\cM)\cap H^1(\Om)$ such that for any $v\in\V_p(\cM)$,
\ben
& &\|v-\pi_h v\|_{L^2(K)}\le C\|p^{-1}h^{1/2}\lj v\rj\|_{L^2(\sigma(K))},\\
&&\|\na(v-\pi_h v)\|_{L^2(K)}\le C\|ph^{-1/2}\lj v\rj\|_{L^2(\sigma (K))},
\een
where $\si(K)=\{e\in\cE^{\rm side}:e\subset\widetilde\omega(K)\}$, $\widetilde\omega(K)$ is a set of elements including $K$ such that $\diam(\widetilde\omega (K))\le Ch_K$. The constant $C$ is independent of $h_K,p$. Moreover, $\pi_hv\in H^1_0(\Om)$ if $v=0$ on $\pa\Om$.
\end{lemma}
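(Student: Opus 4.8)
The plan is to construct $\pi_h$ by a Gauss--Lobatto nodal averaging, adapting the conforming-mesh operators of Burman and Ern \cite{Burman07} and Houston, Sch\"otzau and Wihler \cite{Houston}, and the $1$-irregular construction of Zhu and Sch\"otzau \cite{Zhu}, to $K$-meshes. On the reference square fix the tensor-product Gauss--Lobatto--Legendre (GLL) nodes and weights of order $p$ and push them forward to every $K\in\cM$, writing $\{x_{K,j}\}$ and $\{\rho_{K,j}\}$ for the resulting nodes and weights. Throughout I would use the $p$-robust equivalence of $\|q\|_{L^2(K)}^2$ with the weighted sum $\sum_j\rho_{K,j}|q(x_{K,j})|^2$ for $q\in Q_p(K)$, and its one-dimensional analogue on each side, the constants being independent of $p$ and $h_K$. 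For $v\in\V_p(\cM)$ I then define $\pi_h v\in\V_p(\cM)$ by prescribing its GLL nodal values element by element: at a GLL node interior to $K$ keep $v|_K$; at a GLL node $x$ lying on a side subdivided by a hanging node (in particular at the hanging node itself) assign the value supplied by the trace of the unique coarsest element whose \emph{full} side contains $x$ --- this is exactly the hanging-node constraint that forces $\pi_h v$ to be globally continuous, hence $\pi_h v\in\V_p(\cM)\cap H^1(\Om)$; at every remaining (conforming) skeleton node set $\pi_h v(x)$ equal to the average of $v|_{K'}(x)$ over the elements $K'$ having $x$ among their GLL nodes; and at GLL nodes on $\pa\Om$ average only over boundary elements, so that $\pi_h v\in H^1_0(\Om)$ when $v|_{\pa\Om}=0$.

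Next I would estimate $\|v-\pi_h v\|_{L^2(K)}$ by the nodal norm equivalence. At GLL nodes interior to $K$, and at any boundary GLL node at which $K$ is itself the coarsest element, the difference $v|_K-\pi_h v$ vanishes. At a contributing boundary node $x$ of $K$, the value $(v-\pi_h v)(x)$ is, by construction, an average of differences $v|_K(x)-v|_{K'}(x)$ with $K'$ a neighbour, and each difference telescopes along a bounded-length path of sides $e'\in\cE^{\rm side}$ issuing from $x$ into $\sum_{e'}\pm\lj v\rj_{e'}(x)$. The decisive point is that every such $e'$ is a \emph{full} side of the smaller of its two adjacent elements and $x$ is one of that element's own GLL nodes; applying the one-dimensional GLL norm equivalence on $e'$ with $p$-independent constants, together with the weight scaling $\rho_{K,j}\le C\,h_K\,p^{-2}\,w^{\mathrm{1D}}_{e',j}$, gives
\[
\|v-\pi_h v\|_{L^2(K)}^2\le C p^{-2}\sum_{e'\in\si(K)} h_{e'}\,\|\lj v\rj_{e'}\|_{L^2(e')}^2
=C\,\big\|p^{-1}h^{1/2}\lj v\rj\big\|_{L^2(\si(K))}^2,
\]
where $\widetilde\omega(K)$ is the patch swept out by these paths and $\si(K)$ the sides of $\cM$ it contains; Assumption (H3) guarantees that $\widetilde\omega(K)$ consists of a uniformly bounded number of elements of comparable size with $\diam(\widetilde\omega(K))\le Ch_K$, which bounds the number of terms and lets us replace each $h_{e'}$ by $h|_{e'}$. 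The gradient bound then follows from the $hp$ inverse inequality $\|\na q\|_{L^2(K)}\le C p^2 h_K^{-1}\|q\|_{L^2(K)}$ applied to the polynomial $q=(v-\pi_h v)|_K$, since $Cp^2h_K^{-1}\cdot p^{-1}h_K^{1/2}=C\,p\,h_K^{-1/2}$.

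I expect the main obstacle to be precisely the bookkeeping forced by hanging nodes and non-matching element sizes: the conforming GLL degrees of freedom must be arranged so that at every contributing node the jumps that appear are jumps across sides that are full sides of the \emph{smaller} adjacent element, evaluated at that element's own GLL nodes. This is what keeps all the quadrature norm equivalences $p$-robust --- the superficially equivalent estimate written on the larger element's side would cost a factor growing exponentially in $p$, being in effect a one-dimensional domain inverse estimate --- and it is the only place where the $K$-mesh hypothesis is genuinely needed, through the uniform control of the cardinality and size spread of local patches. The remaining ingredients, namely the GLL quadrature norm equivalences, the telescoping identity for the jumps over a patch, and the $hp$ inverse inequality, are standard and need only be assembled with the scalings tracked carefully.
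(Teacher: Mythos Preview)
Your overall strategy---GLL nodal averaging at conforming vertices together with a hanging-node constraint on nonconforming sides, followed by the $p$-robust GLL quadrature norm equivalence and the $hp$ inverse inequality---is exactly the spirit of the paper's construction. There is, however, a real gap in the telescoping step: the identity $(v-\pi_h v)(x)=\sum_{e'}\pm\lj v\rj_{e'}(x)$ with all jumps evaluated at the \emph{same} point $x$ is incompatible with global continuity of $\pi_h v$. If on a nonconforming side $e\subset e''$ (with $e$ a full side of the fine element $K$ and $e''$ the full side of the coarse neighbour $K'$) you set $\pi_h v(x)=v|_{K'}(x)$ at the fine GLL nodes, then indeed $(v-\pi_h v)|_K(x)=\lj v\rj_e(x)$ and your estimate goes through; but $\pi_h v|_{K'}$ on $e''$ has already been modified at its corners by averaging, so $\pi_h v|_{K'}|_e\neq v|_{K'}|_e$ and continuity fails. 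If instead you enforce the genuine hanging-node constraint $\pi_h v|_K(x)=\pi_h v|_{K'}(x)$, continuity holds, but now
\[
(v-\pi_h v)|_K(x)=\lj v\rj_e(x)-\sum_{i}\bigl[(\pi_h^0 v)(P_i)-v|_{K'}(P_i)\bigr]\phi^{e''}_{P_i}(x),
\]
where $P_i$ are the \emph{endpoints of the coarse side $e''$} and $\phi^{e''}_{P_i}$ are the coarse-side endpoint Lagrange bases. These corner corrections live at points $P_i\neq x$ and feed, recursively, into yet coarser levels; your claim that for every contributing side ``$x$ is one of that element's own GLL nodes'' breaks precisely here, and the nodal norm equivalence alone does not control $\phi^{e''}_{P_i}(x)$.

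The paper handles this by working side-by-side rather than node-by-node. It classifies the mesh nodes and sides into levels $\cN^0,\cN^1,\ldots$ and $\cE^1,\cE^2,\ldots$ (bounded in number by Assumption~(H3)), defines $\pi_h v$ on each side $e\in\cE^k$ as $v|_{K'}$ from the larger neighbour plus two endpoint corrections, and then estimates $\|v|_K-\pi_h v\|_{L^2(e)}$ directly. The two additional ingredients that replace your telescope are the explicit bound $\|\hat\phi_0\|_{L^2(-1,1)}\le 2/\sqrt{p(p+1)}$ for the endpoint Lagrange basis (this is what absorbs the corner corrections with the right $p$-scaling) and the one-dimensional $L^\infty$ inverse estimate at vertices; the argument is then an induction on the level $k$. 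Passage from side to element uses the inverse trace inequality of \cite[Lemma~3.1]{Burman07} rather than the two-dimensional GLL norm equivalence. Your route can be completed, but it needs these same two pieces---the $O(p^{-1})$ $L^2$-bound on $\hat\phi_0$ and an explicit level induction---in place of the pointwise telescope you propose.
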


\begin{proof} Let $\hat K=I\times I$, $I=(-1,1)$, be the reference element. Let $\widehat\cN_p$ be the Gauss-Legendre-Lobatto grid of $\hat K$, that is, $\widehat\cN_p=\{(\xi_i,\xi_j)^T\in\hat K: 0\le i\le p\}$, where $\xi_i,0\le i\le p$, are the zeros of the polynomial $(1-\xi^2)L'_p(\xi)$. Here $\{L_n\}_{n\ge 0}$ is the set of Legendre polynomials. Let $\{\hat\phi_i\}_{i=0}^p$ be the set of Lagrange interpolation functions in $Q_p(\Lam)$ corresponding to the Gauss-Legendre-Lobatto nodes, that is, $\hat\phi_i\in Q_p(\Lam)$, $\hat\phi_i(\xi_j)=\de_{ij}$, $0\le i,j\le p$. Here $\de_{ij}$ is the Kronecker delta.

It is known by the differential equation satisfied by the Legendre polynomials that
\ben
\hat\phi_i(\xi)=\frac{-1}{p(p+1)}\frac{(1-\xi^2)L'_p(\xi)}{(\xi-\xi_i)L_p(\xi_i)},\ \ \ \ 0\le i\le p.
\een
Notice that $\|L'_p\|_{L^2(\Lam)}=\sqrt{p(p+1)}$, $L_p(\pm 1)=(\pm 1)^p$, we have
\be\label{d4}
\|\hat\phi_0\|_{L^2(\Lam)}\le [p(p+1)]^{-1}\|(1-\xi)L_p'\|_{L^2(\Lam)}\le 2/\sqrt{p(p+1)}.
\ee
Similarly, $\|\hat\phi_p\|_{L^2(\Lam)}\le 2/\sqrt{p(p+1)}$.

For any $K\in\cM$, let $F_K:\hat K\to K$ be the affine mapping. Denote $\cN_p(K)=F_K(\widehat\cN_p)$  the set of Gauss-Legendre-Lobatto nodes on $K$. The degrees of freedom of a function in $Q_p(K)$ are its nodal values at $\cN_p(K)$. The set of basis functions of $Q_p(K)$ is $\{\phi_P=\hat\phi_{\hat P}\circ F_K^{-1}:P=F_K(\hat P)\}$.
Here $\hat\phi_{\hat P}$ is the nodal basis of $\hat Q_p(\hat K)$ corresponding to $\hat P\in\widehat\cN_p$.

To construct the interpolation operator, we classify the set of nodes and sides of the mesh $\cM$. Let $\cN^0$ be the set of conforming nodes. For $k\ge 1$, let $\cN^k$ be the subset of nodes that are located on some side $e\in\cE^{\rm side}$ whose end points are in $\cN^m, 0\le m\le k-1$, and with at least one end point in $\cN^{k-1}$. By the assumption (H3), the maximum number of levels $L$ of the classification of the nodes is uniformly bounded.

For $1\le k\le L+1$, we denote $\cE^k\subset\cE^{\rm side}$ the collection of sides whose end points are in $\cN^m, 0\le m\le k-1$, and with at least one end point in $\cN^{k-1}$. Clearly, $\cE^k\cap\cE^l=\emptyset$ if $k\not= l$ and $\cE^1$ is the set of sides whose end points are conforming nodes. For any $v\in\V_p(\cM)$, we define $\pi_h^k v\in P_p(\cE^k)$, the set of polynomials of order $p$ in each side of $\cE^k$, successively as follows.
\begin{enumerate}
\item If $e\in\cE^1$ whose end points $P_1,P_2\in\cN^0$, $e=\pa K\cap\pa K'$, $K,K'\in\cM$, and $K'$ is the element such that the length of its side including $e$ is larger or equal to $|e|$, we define
\be\label{d1}
\pi^1_hv=v|_{K'}+\sum_{i=1}^2\left[(\pi_h^0v)(P_i)-(v|_{K'})(P_i)\right]\phi_{P_i}\ \ \mbox{on }e,
\ee
where for $P\in\cN^0$, $
(\pi^0_hv)(P)=\frac 1{\#\{K\in\cM:P\in\bar K\}}\sum_{K\in\cM,P\in \bar K}(v|_K)(P)$, the local average of $v$
sharing $P$ as the common vertex.
Here the boundary value of $v|_K$ is understood as its trace.
\item For $k\ge 2$, $e\in\cE^k$ whose end points $P_i\in\cN^{m_i} (i=1,2)$, $e=\pa K\cap\pa K'$, $K,K'\in\cM$, and $K'$ is the element such that the length of its side including $e$ is larger or equal to $|e|$, we define
\be\label{d2}
\pi^k_hv=v|_{K'}+\sum_{i=1}^2\left[(\pi_h^{m_i}v)(P_i)-(v|_{K'})(P_i)\right]\phi_{P_i}\ \ \mbox{on }e.
\ee
Since for $e\in\cE^k$, $0\le m_i\le k-1$, $i=1,2$, \eqref{d2} is well defined. Obviously, $(\pi^k_hv)(P_i)=(\pi^{m_i}_hv)(P_i)$, $i=1,2$.
\end{enumerate}

We define $(\pi_h v)|_e=(\pi_h^k v)|_e$ if $e\in\cE^k, 1\le k \le L+1$. Then $\pi_h v$ is piecewise polynomial of order $p$ and continuous on $\cE^{\rm side}$. Moreover, $\pi_hv=0$ on $\pa\Om$ if $v=0$ on $\pa\Om$. Having defined the $\pi_h v$ on $\cE^{\rm side}$ we now define $\pi_h v$ on each element $K\in\cM$ as
\ben
\pi_h v=\sum_{P\in\cN_p(K),P\not\in\pa K}v(P)\phi_P+\sum_{P\in\cN_p(K),P\in\pa K}(\pi_hv)(P)\phi_P.
\een
Then $v-\pi_hv\in Q_p(K)$ and vanishes in all interior Gauss-Legrendre-Lobatto nodes, by the inverse trace inequality in Burman and Ern \cite[Lemma 3.1]{Burman07}, we have
\be\label{d5}
\|v-\pi_h v\|_{L^2(K)}\le Cp^{-1}h_K^{1/2}\sum_{e\subset\pa K}\|v|_K-\pi_h v\|_{L^2(e)}.
\ee
Let $e\subset\pa K$ and $e\in\cE^k$ for some $1\le k\le L+1$. There exists a conforming node $P$ such that $e\in\cE_P=\{e\in\cE^{\rm side}:e\subset{\rm supp}(\psi_P)\}$. By definition, $e$ has the end points $P_i\in\cN^{m_i}, m_i\le k-1$, $i=1,2$, and one of $m_1,m_2$ is $k-1$. If $P_i\not\in\cN^0$, then $\psi_P(P_i)\not=0$ and it is a hanging node of some $e'_i\in\cE^{m_i}$. The crucial observation is that $e'_i\in\cE_P$. Thus by \eqref{d2} and using \eqref{d4} we have
\ben
\|v|_K-\pi_h v\|_{L^2(e)}&=&\|v|_K-\pi_h^k v\|_{L^2(e)}\\
&\le&\|\lj v\rj\|_{L^2(e)}+Cp^{-1}h_K^{1/2}\sum^2_{i=1}|(v|_{e_i'}-\pi^{m_i}_hv)(P_i)|.
\een
By the inverse estimate
\ben
|(v|_{e'_i}-\pi^{m_i}_hv)(P_i)|\le\|v-\pi^{m_i}_hv\|_{L^\infty(e_i')}\le Cph_K^{-1/2}\|v-\pi^{m_i}_hv\|_{L^2(e'_i)}.
\een
Combining above two inequalities we obtain
\ben
\|v|_K-\pi_h v\|_{L^2(e)}&\le&\|\lj v\rj\|_{L^2(e)}+C\max_{\stackrel{e'\in\cE_P}{e'\in\cE^m,\,1\le m\le k-1}}\|v-\pi^m_hv\|_{L^2(e')}\\
& &+Cp^{-1}h_K^{1/2}\max_{Q\in\cN^0,Q\in{\rm supp}(\psi_P)}|(v-\pi^0_hv)(Q)|.
\een
By the mathematical induction, since $k\le L+1$ and $L$ is uniformly bounded according to (H3), we obtain
\ben
\|v|_K-\pi_h v\|_{L^2(e)}&\le&\|\lj v\rj\|_{\cE_P}+C\max_{e'\in\cE_P,e'\in\cE^1}\|v-\pi^0_hv\|_{L^2(e')}\\
& &+Cp^{-1}h_K^{1/2}\max_{Q\in\cN^0,Q\in{\rm supp}(\psi_P)}|(v-\pi^0_hv)(Q)|\\
&\le&\|\lj v\rj\|_{\cE_P}+Cp^{-1}h_K^{1/2}\max_{Q\in\cN^0,Q\in{\rm supp}(\psi_P)}|(v-\pi^0_hv)(Q)|,
\een
where we have used \eqref{d1} in the second estimate. Since $(\pi_h^0 v)(Q)$ is the local average of $v$
sharing $Q$ as the common vertex, we have
\ben
|(v-\pi^0_hv)(Q)|\le\sum_{Q\in \bar e', e'\in\cE^{\rm side}}\|\lj v\rj\|_{L^\infty(e')}\le C\sum_{Q\in \bar e', e'\in\cE^{\rm side}}\|ph^{-1/2}\lj v\rj \|_{L^2(e')}.
\een
By using the assumption (H3), we conclude that
\ben
\|v|_K-\pi_h v\|_{L^2(e)}&\le& C\|\lj v\rj\|_{L^2(\sigma (K))},
\een
where $\sigma(K)$ is set of sides included in some $\widetilde\omega(K)$ which is a union of elements surrounding $K$ whose diameter is bounded by $Ch_K$. This shows the first estimate of the lemma by \eqref{d5}. The
second estimate can be proved by the standard inverse estimate
\ben
\|\na(v-\pi_hv)\|_{L^2(K)}\le Cp^2h_K^{-1}\|v-\pi_h v\|_{L^2(K)}\le C\|ph^{-1/2}\lj v\rj\|_{L^2(\si(K))}.
\een
This completes the proof. $\Box$
\end{proof}

Let $\Sigma$ be a Lipschitz curve in $\R^2$, we recall the definition of the Aronszaja-Slobodeckij norm $\|v\|_{H^{1/2}(\Sigma)}=(\|v\|_{L^2(\Sigma)}^2+|v|_{H^{1/2}(\Sigma)}^2)^{1/2}$, where
\ben
|v|_{H^{1/2}(\Sigma)}^2=\int_\Sigma\int_\Sigma\frac{|v(x)-v(y)|^2}{|x-y|^2}ds(x)ds(y).
\een
The following Gagliardo-Nirenberg type estimate for $H^{1/2}$-seminorm is well known (see e.g., Triebel \cite{Triebel}).

\begin{lemma}\label{lem:3.0}
Let the interval $(a,b)\subset\R$ and $v\in H^1(a,b)$. Then $|v|_{H^{1/2}(a,b)}\le C\|v\|_{L^2(a,b)}^{1/2}\|v'\|_{L^2(a,b)}^{1/2}$ for some constant $C$ independent of $(a,b)$.
\end{lemma}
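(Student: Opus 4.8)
The plan is to rewrite the seminorm in difference-quotient form and then balance two elementary bounds. First I would substitute $y=x+h$ to get
\[
|v|_{H^{1/2}(a,b)}^2=\int_{-(b-a)}^{b-a}\frac{1}{h^2}\Big(\int_{I_h}|v(x+h)-v(x)|^2\,\mathrm{d}x\Big)\,\mathrm{d}h,
\]
where $I_h=\{x:x\in(a,b),\ x+h\in(a,b)\}$ has length $(b-a)-|h|$. Since $v\in H^1(a,b)$ is absolutely continuous, $v(x+h)-v(x)=\int_x^{x+h}v'$, and the inner integral admits two bounds: by Cauchy--Schwarz on an interval of length $|h|$ followed by Fubini, $\int_{I_h}|v(x+h)-v(x)|^2\,\mathrm{d}x\le h^2\|v'\|_{L^2(a,b)}^2$; and by the triangle inequality, $\int_{I_h}|v(x+h)-v(x)|^2\,\mathrm{d}x\le 4\|v\|_{L^2(a,b)}^2$. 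Hence the $h$-integrand is bounded by $\min\!\big(\|v'\|_{L^2}^2,\ 4h^{-2}\|v\|_{L^2}^2\big)$.

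Next, writing $A=\|v'\|_{L^2(a,b)}$ and $B=\|v\|_{L^2(a,b)}$, I would split the $h$-integral at the crossover point $h_0=2B/A$ where the two bounds coincide. If $h_0\le b-a$, then $\int_{|h|\le h_0}A^2\,\mathrm{d}h+\int_{|h|\ge h_0}4h^{-2}B^2\,\mathrm{d}h=2h_0A^2+8B^2/h_0=8AB$; if $h_0>b-a$, then $A^2\le 4h^{-2}B^2$ throughout $|h|<b-a$, so $\int_{|h|<b-a}A^2\,\mathrm{d}h=2(b-a)A^2\le 2h_0A^2=4AB$. In either case $|v|_{H^{1/2}(a,b)}^2\le 8\|v'\|_{L^2(a,b)}\|v\|_{L^2(a,b)}$, which is the claim with $C=2\sqrt2$, manifestly independent of $a$ and $b$.

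The only subtle point, and the one I would flag, is that the naive pointwise estimate $|v(x)-v(y)|^2/|x-y|^2\le |x-y|^{-1}\int|v'|^2$ is useless by itself, because $|x-y|^{-1}$ fails to be integrable over the square $(a,b)^2$ near the diagonal; the fix is precisely to retain both the $H^1$-type bound (sharp for small $|h|$) and the $L^2$-type bound (sharp for large $|h|$) and interpolate them via the cut at $h_0$. Everything else — the Fubini computation behind the first bound and the bookkeeping of the ranges of $x$ and $h$ — is routine. As an alternative one could first invoke translation and dilation invariance of the $H^{1/2}$-seminorm, together with the scale invariance of $\|v\|_{L^2}^{1/2}\|v'\|_{L^2}^{1/2}$, to reduce to the reference interval $(0,1)$; but the direct argument already yields an absolute constant, so I would not take that route.
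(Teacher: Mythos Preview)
Your argument is correct and fully self-contained. The change of variables to difference-quotient form, the two elementary bounds $\int_{I_h}|v(x+h)-v(x)|^2\,dx\le h^2\|v'\|_{L^2}^2$ and $\le 4\|v\|_{L^2}^2$, and the split at the crossover $h_0=2B/A$ all check out; the edge case $A=0$ (constant $v$) is trivial, so defining $h_0$ is harmless. You obtain the explicit constant $C=2\sqrt2$.

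By contrast, the paper does not prove this lemma at all: it states the estimate as a well-known Gagliardo--Nirenberg type inequality and simply cites Triebel \cite{Triebel}. So there is no ``paper's proof'' to compare to. What your direct computation buys over the citation is an explicit absolute constant and transparency that nothing depends on $(a,b)$; what the citation buys is brevity and a pointer to the general interpolation-space framework in which such multiplicative inequalities between $L^2$, $H^{1/2}$, and $H^1$ norms sit. For the purposes of the paper your elementary argument is more than adequate, and arguably preferable since the result is only needed in this one-dimensional form.
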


By definition, any function $v\in\X_p(\cM)$ can be written as $v=v_1\chi_{\Om_1}+v_2\chi_{\Om_2}$ for some $v_i\in\V_p(\cM_i)$. In the following, we still denote by $v_i$ the function in $\V_p(\cM)$ which is obtained by zero extension of $v_i$ outside $\Om_i^h$, $i=1,2$.

\begin{lemma}\label{lem:3.3}
There exists a linear operator $\pi^c_h:\X_p(\cM)\to H^1(\Om)$ such that
\ben
\|a^{1/2}\na_h(v-\pi^c_h v)\|_\cM&\le&C\left(\sum^2_{i=1}\|\hat a^{1/2}ph^{-1/2}\lj v_i\rj\|_{\cE_i^{\rm side}}+\|\hat a^{1/2}ph^{-1/2}\lj v\rj\|_{\cE^\Ga}\right)\\
& &+C\|\hat a^{1/2}p^{-1}h^{1/2}\na_\Ga\lj v\rj \|_{\cE^\Ga}.
\een
Here $\na_\Ga$ is the tangential gradient on $\Ga$. Moreover, $\pi_h^cv=\pi_hv_i$ on $\pa\Om$ if $\pa\Om_i\cap\pa\Om\not=\emptyset$, $i=1,2$.
\end{lemma}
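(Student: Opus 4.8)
The plan is to reduce the construction to the local smoothing operator $\pi_h$ of Lemma~\ref{lem:3.2}, applied separately to $v_1$ and $v_2$, and then to repair the residual interface mismatch by an $H^1$-correction supported in a thin tube around $\Ga$. Since $\cM_i$ is itself a $K$-mesh of the polygon $\Om_i^h$, running the construction of Lemma~\ref{lem:3.2} on $\cM_i$ (leaving the traces on $\pa\Om_i^h$ untouched) produces $\hat v_i\in\V_p(\cM_i)\cap H^1(\Om_i^h)$ with $\|D^m(v_i-\hat v_i)\|_{L^2(K)}\le C\|p^{m-1}h^{1/2-m}\lj v_i\rj\|_{L^2(\sigma_i(K))}$ for $m=0,1$ and every $K\in\cM_i$, where $\sigma_i(K)$ is a local set of sides contained in $\cE_i^{\rm side}$, because the sides crossed by $\Ga$ are interior to $\Om_i^h$ and are therefore smoothed. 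Summed over $\cM_i$, this already accounts for the $\cE_i^{\rm side}$-terms of the claimed bound. Because each $\hat v_i$ is continuous on a neighbourhood of $\Ga$ and piecewise polynomial, $\mu:=(\hat v_1-\hat v_2)|_\Ga$ is a continuous, piecewise smooth function on $\Ga$; decomposing $\mu|_{\Ga_K}=(\hat v_1-v_1)|_{\Ga_K}+\lj v\rj_{\Ga_K}+(v_2-\hat v_2)|_{\Ga_K}$ and combining Lemma~\ref{lem:2.5}, \eqref{m1} and the smoothing bounds, one controls $\|ph_K^{-1/2}\mu\|_{L^2(\Ga_K)}+\|p^{-1}h_K^{1/2}\na_\Ga\mu\|_{L^2(\Ga_K)}$ by the local $\cE^\Ga$-data $\lj v\rj_{\Ga_K}$, $\na_\Ga\lj v\rj_{\Ga_K}$ plus $\cE_i^{\rm side}$-terms. (If one prefers to measure the mismatch through $\|\mu\|_{H^{1/2}(\Ga)}$, this is where the localization lemma of Faermann~\cite{Faermann} together with Lemma~\ref{lem:3.0} would enter.)

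Next I would let $\tilde\mu$ denote the extension of $\mu$ that is constant along the normals of $\Ga$. It is well defined in a tube of width $\sim h_K/p^2$ around $\Ga_K$ on the $\Om_2$-side — such a tube fits inside $K_2$ because $K$ is large with respect to $\Om_2$, so $K_2$ has width $\gtrsim\de_0h_K$ — and it has vanishing normal derivative, so its gradient on any slice involves only $\na_\Ga\mu$. I would fix a cutoff $\theta$ with $\theta\equiv1$ on $\Ga$, $\theta\equiv0$ on the inner edge of the tube and $|\na\theta|\lesssim p^2/h_K$, and define $\pi^c_h v:=\hat v_1$ on $\Om_1$ and $\pi^c_h v:=\hat v_2+\theta\tilde\mu$ on $\Om_2$. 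The two one-sided traces on $\Ga$ are $\hat v_1|_\Ga$ and $\hat v_2|_\Ga+\mu=\hat v_1|_\Ga$, so $\pi^c_h v\in H^1(\Om)$; near $\pa\Om$ there is no interface, hence $\pi^c_h v=\hat v_2=\pi_h v_2$, which is the last assertion (recall $\pa\Om\subset\pa\Om_2$ since $\bar\Om_1\subset\Om$).

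For the estimate, on $\Om_1$ we have $v-\pi^c_h v=v_1-\hat v_1$, already controlled by the first step. On $\Om_2$, $v-\pi^c_h v=(v_2-\hat v_2)-\theta\tilde\mu$, where the second term is supported in the tube and $\na(\theta\tilde\mu)=(\na\theta)\tilde\mu+\theta\na\tilde\mu$. Slice-by-slice integration gives $\|\tilde\mu\|_{L^2({\rm tube}\cap K_2)}\lesssim(h_K/p^2)^{1/2}\|\mu\|_{L^2(\Ga_K)}$ and $\|\na\tilde\mu\|_{L^2({\rm tube}\cap K_2)}\lesssim(h_K/p^2)^{1/2}\|\na_\Ga\mu\|_{L^2(\Ga_K)}$, whence $\|\na(\theta\tilde\mu)\|_{L^2(K_2)}\lesssim ph_K^{-1/2}\|\mu\|_{L^2(\Ga_K)}+p^{-1}h_K^{1/2}\|\na_\Ga\mu\|_{L^2(\Ga_K)}$. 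Summing over $\cM^\Ga$ with the coefficient weights $\hat a$ and substituting the bound for $\mu$ obtained above produces the $\cE^\Ga$-terms, which finishes the proof.

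The main obstacle is the choice of the tube width: it must be of relative size $O(p^{-2})$ so that integrating the polynomial-trace quantities $\mu,\na_\Ga\mu$ across its slices costs exactly the powers of $p$ and $h$ appearing in the asserted weights and no worse, yet it must remain inside $K_2$ — which is precisely where the hypothesis that every $K\in\cM$ is large with respect to both subdomains, and \eqref{HH}, are used. Equally essential is using the normal-constant extension $\tilde\mu$ rather than, e.g., the polynomial $\hat v_1-\hat v_2$: only the former has bounded normal derivative, so that no (exponentially $p$-dependent) domain inverse factor à la Lemma~\ref{lem:2.3} is incurred; a minor further issue is the breakdown of the normal coordinate at the singular points of $\Ga$, which calls for a local modification of the tube there.
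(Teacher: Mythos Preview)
Your construction is correct in outline but is a genuinely different route from the paper. Both proofs begin identically, applying the smoothing operator of Lemma~\ref{lem:3.2} on each $\cM_i$ to produce $\pi_h v_i\in H^1(\Om_i^h)$ and reducing matters to controlling the interface mismatch $\mu=(\pi_h v_1-\pi_h v_2)|_\Ga$. The paper then takes the \emph{global} harmonic extension of $\mu$ into the subdomain with the smaller coefficient (say $\Om_1$), so that $\|a_1^{1/2}\na w_1\|_{L^2(\Om_1)}\le C a_1^{1/2}\|\mu\|_{H^{1/2}(\Ga)}$, and localizes the $H^{1/2}$-seminorm via Faermann's lemma and Lemma~\ref{lem:3.0}. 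Your tube-and-cutoff construction bypasses Faermann's lemma and trace theory entirely, at the price of three technicalities the harmonic extension avoids: (i) the correction should sit on the side with the \emph{smaller} $a_i$, not on $\Om_2$ by default, otherwise the $\cE_1^{\rm side}$ terms arising from $\hat v_1-v_1$ pick up an unwanted factor $\sqrt{a_2/a_1}$ and coefficient robustness is lost; (ii) the correction $\theta\tilde\mu$ must be globally in $H^1(\Om_i)$, which forces $\theta$ to be continuous across element interfaces despite the tube width $\sim h_K/p^2$ varying with $K$---this needs a smoothed tube-width function along $\Ga$, not an element-by-element definition; (iii) the normal-coordinate failure at singular points of $\Ga$, which you flag. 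None of these is fatal, but each requires work that the paper's harmonic-extension argument simply does not need. What your approach buys is a completely explicit correction and a proof that never leaves $L^2$-norms; what the paper's approach buys is that it handles (i)--(iii) automatically and cleanly.
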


\begin{proof} Without loss of generality, we assume $a_1\le a_2$. By Lemma \ref{lem:3.2}, for $v_i\in\V_p(\cM_i)$, $i=1,2$, there exists $\pi_h v_i\in\V_p(\cM_i)\cap H^1(\Om_i^h)$ such that for any $K\in\cM_i$,
\be
& &\|v_i-\pi_h v_i\|_{L^2(K)}\le C\|p^{-1}h^{1/2}\lj v_i\rj\|_{L^2(\si(K))},\label{n1}\\
& &\|\na(v_i-\pi_h v_i)\|_{L^2(K)}\le C\|ph^{-1/2}\lj v_i\rj\|_{L^2(\si(K))}.\label{n2}
\ee
Let $w_1\in H^1(\Om_1)$ satisfy
\ben
-\De w_1=0\ \ \mbox{in }\Om_1,\ \ w_1=\lj\pi_h v\rj_\Ga \ \ \mbox{on }\Ga,\ \ w_1=0\ \ \mbox{on }\pa\Om_1\bks\Ga.
\een
We define $\pi_h^cv:=(\pi_h v_1-w_1)\chi_{\Om_1}+(\pi_h v_2)\chi_{\Om_2}$.
Obviously, $\pi^c_hv\in H^1(\Om)$. By \eqref{n2},
\be\label{n5}
& &\|a^{1/2}\na_h(v-\pi^c_hv)\|_\cM\nn\\
&\le&C\left(a^{1/2}_1\|\lj\pi_h v\rj\|_{H^{1/2}(\Ga)}+\sum^2_{i=1}\|ph^{-1/2}\lj a_i^{1/2}v_i\rj\|_{\cE_i^{\rm side}}\right).
\ee
We now estimate $\|\lj\pi_h v\rj\|_{H^{1/2}(\Ga)}$. We know from the construction of the finite element space that $\Ga=\cup_{K\in\cM}\Ga_{K}$. Since $K$ is large with respect to both $\Om_1,\Om_2$, the partition $\{\Ga_{K}, K\in\cM\}$ of $\Ga$ is shape regular in the sense that
\be\label{n3}
|\Ga_{K}|/|\Ga_{K'}|\le C_0,\ \ \forall K,K'\in\cM^\Ga, \ \ K,K'\ \mbox{are adjacent}.
\ee
Let
\ben
\omega(\Ga_{K })=\cup\{\Ga_{K'}:\bar K'\cap \bar K\not=\emptyset\}
\een
be the set of neighboring curve segment of $\Ga_{K}$. By the localization lemma of the $H^{1/2}$ semi-norm in Faermann \cite[Lemma 2.3]{Faermann}, we know that
\ben
|\lj\pi_h v\rj |_{H^{1/2}(\Ga)}^2\le\sum_{K\in\cM}|\lj\pi_h v\rj|^2_{H^{1/2}(\omega(\Ga_{K}))}+C\sum_{K\in\cM}h_K^{-1}\|\lj\pi_h v\rj\|_{L^2(\Ga_K)}^2,
\een
where the constant $C$ depends on  the Lipschitz constant of the curve $\Ga$ and the shape regularity
constant $C_0$ in \eqref{n3}. Now by Lemma \ref{lem:3.0} we obtain easily
\ben
\sum_{K\in\cM}|\lj\pi_h v\rj |_{H^{1/2}(\omega(\Ga_{K}))}^2\le C\sum_{K\in\cM}\|\lj\pi_h v\rj\|_{L^2(\Ga_K)}\|\na_\Ga\lj\pi_h v\rj\|_{L^2(\Ga_K)}.
\een
Therefore,
\be\label{n4}
& &|\lj\pi_h v\rj|_{H^{1/2}(\Ga)}^2\nn\\
&\le&C\sum_{K\in\cM}\left(\|\lj\pi_h v\rj\|_{L^2(\Ga_K)}\|\na_\Ga\lj\pi_h v\rj\|_{L^2(\Ga_K)}+h_K^{-1}\|\lj\pi_h v\rj\|_{L^2(\Ga_K)}^2\right).
\ee
It is easy to see that
\ben
\|\na_\Ga\lj\pi_h v\rj\|_{L^2(\Ga_K)}\le\sum^2_{i=1}\|\na(v_i-\pi_h v_i)\|_{L^2(\Ga_K)}+\|\na_\Ga\lj v\rj\|_{L^2(\Ga_K)}.
\een
By Lemma \ref{lem:2.5}, the trace inequality \eqref{m1}, the inverse estimate, and Lemma \ref{lem:3.2} we have
\ben
& &\|\na(v_i-\pi_h v_i)\|_{L^2(\Ga_K)}\\
&\le&C\left(h_K^{-1/2}\|\na(v_i-\pi_hv_i)\|_{L^2(K)}+\|\na(v_i-\pi_h v_i)\|_{L^2(K)}^{1/2}
\|D^2(v_i-\pi_hv_i)\|_{L^2(K)}^{1/2}\right)\\
&\le&Cph_K^{-1/2}\|\na(v_i-\pi_hv_i)\|_{L^2(K)}\\
&\le&Cp^2h_K^{-1/2}\|h^{-1/2}\lj v_i\rj\|_{L^2(\si(K))}.
\een
Thus
\ben
\|\na_\Ga\lj\pi_h v\rj\|_{L^2(\Ga_K)}\le \|\na_\Ga\lj v\rj\|_{L^2(\Ga_K)}+C\sum^2_{i=1}p^2h_K^{-1/2}\|h^{-1/2}\lj v_i\rj\|_{L^2(\si(K))}.
\een
Similarly,
\ben
\|\lj\pi_h v\rj\|_{L^2(\Ga_K)}\le \|\lj v\rj\|_{L^2(\Ga_K)}+C\sum^2_{i=1}\|\lj v_i\rj\|_{L^2(\si(K))}.
\een
By substituting above two estimates into \eqref{n4} we have
\ben
\|\lj\pi_h v\rj_{H^{1/2}(\Ga)}&\le&C\|ph^{-1/2}\lj v\rj\|_{L^2(\Ga_K)}+C\|p^{-1}h^{1/2}\na_\Ga\lj v\rj\|_{L^2(\Ga_K)}\\
&+&C\sum^2_{i=1}\|ph^{-1/2}\lj v_i\rj\|_{\cE^{\rm side}}.
\een
This completes the proof by \eqref{n5} and the fact that $a_1\le\hat a_e\ \ \forall e\in\cE^\Ga\cup\cE_1^{\rm side}$, and $a_2\le 2\hat a_e\ \ \forall e\in\cE_2^{\rm side}$. $\Box$
\end{proof}

Let $U\in\X_p(\cM)$ be the solution of the problem \eqref{a2}, we define the element and jump residuals
\ben
& &R(U)|_K=f+{\rm div}_h(a\na_h U)\ \ \ \ \forall K\in\cM,\\
& &J(U)|_e=\lj a\na_h U\cdot n\rj_e\ \ \ \ \forall e\in\cE^{\rm side}\cup\cE^\Ga.
\een
We also define the functions $\Lam:\Pi_{K\in\cM}L^2(K)\to\R$ and $\hat\Lam:\Pi_{e\in\cE}L^2(e)\to\R$ as
\ben
& &\Lam|_K=\|a^{1/2}\|_{L^\infty(K)}\|a^{-1/2}\|_{L^\infty(\omega(K))}\ \ \ \ \forall K\in\cM,\\
& &\hat\Lam|_e=\max\{\Lam_K:e\cap\bar K\not=\emptyset\}\ \ \ \ \forall e\in\cE.
\een
Here $\omega(K)$ is defined in Remark \ref{rem:3.1}. We remark that $\Lam,\hat\Lam$ are one on the elements or sides away from the interface.

The following theorem is the main result of this section.

\begin{theorem}\label{thm:3.1}
Let $u\in H^1(\Om)$ be the weak solution of \eqref{p1}-\eqref{p3} with $g\in H^1(\pa\Om)$ and $U\in\mathbb{X}_p(\cM)$ be the solution of \eqref{a2}. Then there exists a constant $C$ independent of the coefficient $a$, the mesh $\cM$, the interface $\Ga$, and the ratio ${\max(a_1,a_2)}/{\min(a_1,a_2)}$ such that
\ben
\|u-U\|_{\rm DG}\le C\left(\sum_{K\in\cM}\xi_K^2\right)^{1/2},
\een
where for each $K\in\cM$, the local a posteriori error estimator
\ben
\xi_K^2&=&\Big(\|a^{-1/2}(h/p)\Lam R(U)\|_{K}^2
+\|\hat a^{-1/2}(h/p)^{1/2}\hat\Lam J(U)\|_{\cE_K\cup\Ga_K}^2\Big)\\
&&+\Big(\,\sum^2_{i=1}\|\al^{1/2}\hat\Lam \lj U_i\rj\|_{\cE_K^i}^2+\|\al^{1/2}\hat\Lam \lj U\rj\|_{\Ga_K}^2+\|\al^{1/2}(U-g)\|_{\pa K\cap\pa\Om}^2\Big)\\
&&+\Big(\,\|\hat a^{1/2}p^{-1}h^{1/2}\na_\Ga\lj U\rj\|_{\Ga_K}^2
+\|\hat a^{1/2}p^{-1}h^{1/2}\na_{\pa\Om}(U-g)\|_{\pa K\cap\pa\Om}^2\Big),
\een
$\na_{\pa\Om}$ is the tangential derivative on the boundary $\pa\Om$, $\cE_K=\{e\in\cE^{\rm side}:e\subset\pa K\}$, and $\cE_K^i=\{e\in\cE_i^{\rm side}:e\subset\pa K\}$. 
\end{theorem}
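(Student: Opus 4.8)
The plan is to combine the coercivity $a_h(v,v)\ge c\,\|v\|_{\rm DG}^2$ on $\X_p(\cM)$ (Theorem~\ref{thm:2.1}) with a conforming decomposition of $U$ and a residual identity tested against the quasi-interpolant of Lemma~\ref{lem:3.1}. First observe that the jump part of the DG norm is already contained in the estimator: since $u\in H^1(\Om)$ has no interior side jumps, $\lj u\rj_\Ga=0$, and $u=g$ on $\pa\Om$, we get $\|\al^{1/2}\lj u-U\rj\|_{\bar\cE}^2=\sum_{i=1}^2\|\al^{1/2}\lj U_i\rj\|_{\cE_i^{\rm side}}^2+\|\al^{1/2}\lj U\rj\|_{\cE^\Ga}^2+\|\al^{1/2}(U-g)\|_{\cE^{\rm bdy}}^2\le C\sum_K\xi_K^2$ (using $\hat\Lam\ge1$). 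So everything reduces to bounding $\|a^{1/2}\na_h(u-U)\|_\cM$.

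Next set $U^c=\pi^c_hU\in H^1(\Om)$ from Lemma~\ref{lem:3.3} and write $u-U=(u-U^c)-(U-U^c)$ in broken gradients; the term $\|a^{1/2}\na_h(U-U^c)\|_\cM$ is controlled directly by Lemma~\ref{lem:3.3} through the side-jump, interface-jump and tangential interface-jump terms of $\xi_K$. For $\phi:=u-U^c\in H^1(\Om)$ we have $\na_h\phi=\na\phi$; writing $\na U^c=\na_h U-\na_h(U-U^c)$, subtracting the weak form of $u$, and integrating $(a\na_hU,\na v)_\cM$ by parts over the genuine pieces $K_i=K\cap\Om_i$ gives, for any $v\in H^1_0(\Om)$,
\[
(a\na\phi,\na v)_\Om=(R(U),v)_\cM-\la J(U),v\ra_{\cE^{\rm side}\cup\cE^\Ga}+(a\na_h(U-U^c),\na v)_\cM .
\]
Because $\bar\Om_1\subset\Om$ we have $\pa\Om\subset\pa\Om_2$, so $\phi=g-\pi_hU_2$ on $\pa\Om$; pick $\phi_b\in H^1(\Om)$ with that boundary trace and put $z_0:=\phi-\phi_b\in H^1_0(\Om)$. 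Applying the identity with $v=z_0$, adding $(a\na\phi,\na\phi_b)_\Om$ to reconstruct $\|a^{1/2}\na\phi\|_\Om^2$, and using the discrete equation~\eqref{a2} with test function $\Pi_hz_0\in\X_p(\cM)$ — which is continuous and vanishes on $\pa\Om$, hence has vanishing jumps and $\mathsf{L}(\Pi_hz_0)=0$ — to replace $(R(U),\Pi_hz_0)_\cM-\la J(U),\Pi_hz_0\ra$ by $-(a(\mathsf{L}(U)-\mathsf{L}_1(g)),\na\Pi_hz_0)_\cM$, one arrives at an expression for $\|a^{1/2}\na\phi\|_\Om^2$ built from: the consistency errors $(R(U),z_0-\Pi_hz_0)_\cM$ and $\la J(U),z_0-\Pi_hz_0\ra$; the lifting term $(a(\mathsf{L}(U)-\mathsf{L}_1(g)),\na\Pi_hz_0)_\cM$; the nonconformity term $(a\na_h(U-U^c),\na z_0)_\cM$; and the boundary term $(a\na\phi,\na\phi_b)_\Om$.

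Then I estimate term by term. The consistency errors are bounded by the $L^2$ and trace approximation properties of $\Pi_h$ (Lemma~\ref{lem:3.1}) and Cauchy--Schwarz, with the weights $\Lam,\hat\Lam$ inserted to absorb the jumps of $a$ across the patches $\omega(K)$ that may straddle $\Ga$, producing $C(\sum_K\xi_K^2)^{1/2}\|a^{1/2}\na z_0\|_\Om$ through the terms $\|a^{-1/2}(h/p)\Lam R(U)\|_\cM$ and $\|\hat a^{-1/2}(h/p)^{1/2}\hat\Lam J(U)\|_{\cE^{\rm side}\cup\cE^\Ga}$. The lifting term is handled by Lemma~\ref{lem:2.8} and its analogue for $\mathsf{L}(U)-\mathsf{L}_1(g)$ (which couples $\al^{1/2}\lj U\rj$ on interior sides and $\cE^\Ga$ with $\al^{1/2}(U-g)$ on $\cE^{\rm bdy}$), combined with the $H^1$-stability of $\Pi_h$. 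The nonconformity term uses the bound for $\|a^{1/2}\na_h(U-U^c)\|_\cM$ already obtained, and the boundary term is $\le\|a^{1/2}\na\phi\|_\Om\,\|a^{1/2}\na\phi_b\|_\Om$. For $\phi_b$, split $g-\pi_hU_2=(g-U_2)+(U_2-\pi_hU_2)$, take $\phi_b$ a weighted $H^{1/2}(\pa\Om)\to H^1(\Om)$ extension, and bound $\|a^{1/2}\na\phi_b\|_\Om$ by the weighted $H^{1/2}(\pa\Om)$ norm of the data; localizing with Faermann's lemma and interpolating with Lemma~\ref{lem:3.0}, the first piece yields exactly $\|\al^{1/2}(U-g)\|_{\pa K\cap\pa\Om}$ and $\|\hat a^{1/2}p^{-1}h^{1/2}\na_{\pa\Om}(U-g)\|_{\pa K\cap\pa\Om}$, while the second piece is absorbed by the side-jump terms via Lemma~\ref{lem:3.2}. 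Since $\|a^{1/2}\na z_0\|_\Om\le\|a^{1/2}\na\phi\|_\Om+\|a^{1/2}\na\phi_b\|_\Om$, Young's inequality absorbs $\|a^{1/2}\na\phi\|_\Om$ and gives $\|a^{1/2}\na\phi\|_\Om\le C(\sum_K\xi_K^2)^{1/2}$; together with the bound for $\|a^{1/2}\na_h(U-U^c)\|_\cM$ and the jump part this proves the theorem.

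The main obstacle is contrast-robustness: every Cauchy--Schwarz step against an $\omega(K)$- or $\sigma(K)$-patch crossing $\Ga$ must carry the weight $\Lam$ or $\hat\Lam$ so that no factor of $\max(a_1,a_2)/\min(a_1,a_2)$ survives, and, unlike the interior-penalty setting, the LDG lifting operators $\mathsf{L}(U),\mathsf{L}_1(g)$ enter the error identity and must be controlled contrast-robustly via Lemma~\ref{lem:2.8}. A secondary difficulty is the inhomogeneous boundary data, which is dealt with through the auxiliary lifting $\phi_b$, Faermann's localization, and Lemma~\ref{lem:3.0}; the remaining steps are routine residual-estimate bookkeeping.
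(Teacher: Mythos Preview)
Your strategy is sound and arrives at the same estimate, but it is organised differently from the paper's proof. The paper introduces an auxiliary function $\tilde U\in H^1(\Om)$ with $\tilde U=g$ on $\pa\Om$, defined as the $a$-weighted Galerkin projection of $\na_h U$ (equation \eqref{c1}), and splits $u-U=(u-\tilde U)+(\tilde U-U)$. The conforming piece $u-\tilde U\in H^1_0(\Om)$ is estimated exactly as you do (residual identity tested against $w-\Pi_hw$, discrete equation \eqref{a2} applied to $w_h=\Pi_hw$), while the nonconforming piece $\tilde U-U$ is bounded via the minimisation property $\|a^{1/2}\na_h(U-\tilde U)\|_\cM\le\inf_{w|_{\pa\Om}=g}\|a^{1/2}\na_h(U-w)\|_\cM$, choosing $w=\pi_h^cU-\psi$ with $\psi$ a harmonic lifting of $\pi_h^cU-g$. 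You instead skip $\tilde U$, set $U^c=\pi_h^cU$ directly, and push the boundary defect into the conforming part through the lifting $\phi_b$. Both routes work; the paper's has the cosmetic advantage that the conforming residual argument is run against a genuine $H^1_0$ function without needing $\phi_b$, while yours avoids the extra auxiliary projection. The Faermann localisation and Lemma~\ref{lem:3.0} appear in both proofs at the same places.

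There is one point where your plan, as written, loses contrast-robustness. For the lifting term $(a(\mathsf{L}(U)-\mathsf{L}_1(g)),\na\Pi_hz_0)_\cM$ you invoke Lemma~\ref{lem:2.8} to bound $\|a^{1/2}(\mathsf{L}(U)-\mathsf{L}_1(g))\|_\cM$ and then appeal to ``the $H^1$-stability of $\Pi_h$'' to control $\|a^{1/2}\na\Pi_hz_0\|_\cM$ by $\|a^{1/2}\na z_0\|_\cM$. That weighted stability fails across interface elements: since $\Pi_hz_0$ is a single polynomial on $K$ while $a$ jumps, one only gets $\|a^{1/2}\na\Pi_hz_0\|_{L^2(K)}\le C\,(a_{\max}/a_{\min})^{1/2}\|a^{1/2}\na z_0\|_{L^2(\omega(K))}$ when $\omega(K)$ straddles $\Ga$. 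The paper avoids this by \emph{not} using Lemma~\ref{lem:2.8} here: it expands the lifting via its defining relation \eqref{ll1}, obtaining the boundary integral $-\la\lj U\rj,\widehat{a\na w_h}\cdot n\ra_\cE+\la g,a\na w_h\cdot n\ra_{\cE^{\rm bdy}}$, and then applies Cauchy--Schwarz with the weight $\hat\Lam$ placed on $\lj U\rj$ and $\Lam^{-1}$ on $\na w_h$. This produces the pairing $\|\al^{1/2}\hat\Lam\lj U\rj\|_\cE\cdot\|a^{1/2}\Lam^{-1}\na w_h\|_\cM$, and the second factor \emph{is} robustly bounded by $\|a^{1/2}\na w\|_\cM$ because $\Lam_K^{-1}\|a^{1/2}\|_{L^\infty(K)}=\min_{\omega(K)}a^{1/2}$. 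Your argument needs this same device; once you replace the black-box use of Lemma~\ref{lem:2.8} by the lifting definition and insert $\hat\Lam,\Lam^{-1}$ as in the paper, the rest of your proof goes through unchanged.
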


We remark that by \eqref{xx}, the sum of the second term in $\xi_K^2$ over $K\in\cM$ is equivalent to $\|\al^{1/2}\lj u-U\rj\|_{\bar\cE}^2$ up to the factor $\hat\Lam^2$. The sum of the third term in $\xi_K^2$ over $K\in\cM$ is roughly of the same order as the sum of the second term. The local lower bounds of the first term in $\xi_K^2$ will be studied in the next section.

We also remark that the factors $\Lam,\hat\Lam$ in the theorem are absent in the a posteriori error estimate in Cai, Ye and Zhang \cite{Cai} under the assumption that the mesh fits the interface and the coefficient is quasi-monotone with respect to each node of the mesh. The quasi-monotone property of the diffusion coefficient was first introduced in Petzoldt \cite{Petzoldt} and it also played an important role in Chen and Dai \cite{Chen02} for the study of coefficient robust a posteriori error estimates for conforming finite element methods.

\begin{proof}
Let $\tilde U\in H^1(\Om)$ satisfy $\tilde U=g$ on $\pa\Om$, and
\be\label{c1}
\int_\Om a\na\tilde U\cdot\na vdx=\int_\Om a\na_h U\cdot \na v dx\ \ \ \ \forall v\in H^1_0(\Om).
\ee
By the Lax-Milgram lemma, $\tilde U\in H^1(\Om)$ is well defined. By the triangle inequality, we
have
\be\label{c2}
& &\|u-U\|_{\rm DG}\nn\\
&\le&\|u-\tilde U\|_{\rm DG}+\|U-\tilde U\|_{\rm DG}\nn\\
&\le&\|a^{1/2}\na (u-\tilde U)\|_\cM+\|a^{1/2}\na_h(U-\tilde U)\|_\cM+\|\al^{1/2}\lj U-\tilde U\rj\|_{\bar\cE}.
\ee
By the definition in \eqref{xx}
\ben
\|\al^{1/2}\lj U-\tilde U\rj\|_{\bar\cE}^2&=&\sum^{2}_{i=1}\|\al^{1/2}\lj U_i\rj\|_{\cE_i^{\rm side}}^2
+\|\al^{1/2}\lj U\rj \|_{\cE^\Ga}^2+\|\al^{1/2}(U-g)\|_{\cE^{\rm bdy}}^2.
\een
Thus we are left to bound the first two terms in \eqref{c2} since $\hat\Lam\ge 1$ on $\cE$.

$1^\circ$ We first estimate the conforming component $\|a^{1/2}\na(u-\tilde U)\|_{\cM}$ of the error. For any $w\in H^1_0(\Om)$, we take $w_h=\Pi_h w\in\V_p(\cM)\cap H^1_0(\Om)\subset\X_p(\cM)$. Since $\mathsf{L}(w_h)=0$ we obtain from the discrete equation \eqref{a2} that
\ben
(a\na_h U,\na_h w_h)_\cM-(a\mathsf{L}(U),\na_h w_h)_\cM=(f,w_h)_\cM-(a\mathsf{L}_1(g),\na_h w_h)_\cM.
\een
This yields by \eqref{c1} that
\ben
(a\na (u-\tilde U),\na w)_\cM&=&(f,w)_\cM-(a\na_h U,\na w)_\cM\\
&=&(f,w-w_h)_\cM-(a\na_h U,\na(w-w_h))_\cM\\
& &-\,(a\mathsf{L}(U),\na_hw_h)_\cM+(a\mathsf{L}_1(g),\na_h w_h)_\cM.
\een
Since $w-w_h\in H^1_0(\Om)$, by doing integration by parts we have
\ben
(a\na(u-\tilde U),\na w)_\cM
&=&(R(U),w-w_h)_\cM-\la J(U),w-w_h\ra_\cE\\
& &-\,(a\mathsf{L}(U),\na_hw_h)_\cM+(a\mathsf{L}_1(g),\na_h w_h)_\cM\\
&:=&{\rm I}_1+{\rm I}_2+{\rm I}_3+{\rm I}_4.
\een
By Lemma \ref{lem:3.1} we have
\ben
|{\rm I_1}+{\rm I}_2|&\le&C\|a^{-1/2}(h/p)\Lam R(U)\|_\cM\|a^{1/2}\na w\|_\cM\\
&+&C\|\hat a^{-1/2}(h/p)^{1/2}\hat\Lam J(U)\|_{\cE^{\rm side}\cup\cE^\Ga}\|a^{1/2}\na w\|_\cM.
\een
Moreover, by Lemma \ref{lem:3.1},
\ben
|{\rm I}_3+{\rm I}_4|&=&|-\la\lj U\rj,\widehat{a\na_hw_h}\cdot n\ra_\cE+\la g,\widehat{a\na_hw_h}\cdot n\ra_{\cE^{\rm bdy}}|\\
&\le&C\left(\|\al^{1/2}\hat\Lam\lj U\rj\|_{\cE^{\rm side}\cup\cE^\Ga}+\|\al^{1/2}(U-g)\|_{\cE^{\rm bdy}}\right)\|a^{1/2}\Lam^{-1}\na_h w_h\|_\cM\\
&\le&C\left(\|\al^{1/2}\hat\Lam\lj U\rj\|_{\cE^{\rm side}\cup\cE^\Ga}+\|\al^{1/2}(U-g)\|_{\cE^{\rm bdy}}\right)\|a^{1/2}\na w\|_\cM.
\een
This shows
\be\label{nn1}
& &\|a^{1/2}\na(u-\tilde U)\|_{L^2(\Om)}\nn\\
&\le&C\|a^{-1/2}(h/p)\Lam R(U)\|_{\cM}+C\|\hat a^{-1/2}(h/p)^{1/2}\hat\Lam J(U)\|_{{\cE^{\rm side}\cup\cE^\Ga}}\nn\\
&&+C\|\al^{1/2}\hat\Lam\lj U\rj\|_{\cE^{\rm side}\cup\cE^\Ga}+C\|\al^{1/2}(U-g)\|_{\cE^{\rm bdy}}.
\ee

$2^\circ$ We next estimate the nonconforming component $\|a^{1/2}\na_h(U-\tilde U)\|_\cM$ of the error
in \eqref{c2}. By \eqref{c1} we know that
\ben
\|a^{1/2}\na_h(U-\tilde U)\|_\cM&\le&\inf_{\stackrel{w\in H^1(\Om)}{w=g\,\,\mbox{\scriptsize on }\pa\Om}}\|a^{1/2}\na_h(U-w)\|_\cM\\
&\le&\|a^{1/2}\na_h(U-\pi^c_hU)\|_\cM+\inf_{\stackrel{w\in H^1(\Om)}{w=g\,\,\mbox{\scriptsize on }\pa\Om}}\|a^{1/2}\na(\pi_h^cU-w)\|_\cM.
\een
Let $\psi\in H^1(\Om)$ satisfy $-\De\psi=0$ in $\Om$, $\psi=\pi_h^cU-g\in H^{1/2}(\pa\Om)$. Then $\|\psi\|_{H^1(\Om)}\le C\|\pi_h^cU-g\|_{H^{1/2}(\pa\Om)}$. Thus $w=\pi_h^cU-\psi\in H^1(\Om)$ satisfies $w=g$ on $\pa\Om$, which yields
\ben
\inf_{\stackrel{w\in H^1(\Om)}{w=g\,\,\mbox{\scriptsize on }\pa\Om}}\|a^{1/2}\na(\pi_h^cU-w)\|_\cM\le Ca_j^{1/2}\|\pi^c_hU-g\|_{H^{1/2}(\pa\Om)},
\een
where $j=1,2$ such that $\pa\Om_j\cap\pa\Om\not=\emptyset$. Similar to the argument in the proof of Lemma \ref{lem:3.3}, we can use the localization lemma of the $H^{1/2}$ semi-norm in Faermann \cite[Lemma 2.3]{Faermann} and Lemma \ref{lem:3.0} to obtain
\ben
& &\|\pi^c_hU-g\|_{H^{1/2}(\pa\Om)}\\
&\le&C(\|ph^{-1/2}(\pi^c_hU-g)\|_{\cE^{\rm bdy}}+\|p^{-1}h^{1/2}\na_{\pa\Om}(\pi^c_hU-g)\|_{\cE^{\rm bdy}}).
\een
Since by Lemma \ref{lem:3.3}, $\pi_h^cU=\pi_h U_j$ on $\pa\Om$ for $\pa\Om_j\cap\pa\Om\not=\emptyset$, we have by the triangle inequality that
\ben
& &\|\pi^c_hU-g\|_{H^{1/2}(\pa\Om)}\\
&\le&C(\| ph^{-1/2}(\pi_hU_j-U_j)\|_{\cE^{\rm bdy}}+\| p^{-1}h^{1/2}\na_{\pa\Om}(\pi_hU_j-U_j)\|_{\cE^{\rm bdy}})\\
&+&C(\| ph^{-1/2}(U-g)\|_{\cE^{\rm bdy}}+\| p^{-1}h^{1/2}\na_{\pa\Om}(U-g)\|_{\cE^{\rm bdy}}).
\een
By inverse trace inequality in Lemma \ref{lem:2.6} and Lemma \ref{lem:3.2},
\ben
& &\| ph^{-1/2}(\pi_hU_j-U_j)\|_{\cE^{\rm bdy}}+\| p^{-1}h^{1/2}\na_{\pa\Om}(\pi_hU_j-U_j)\|_{\cE^{\rm bdy}}\\
&\le&C(\|p^2h^{-1}(\pi_hU_j-U_j)\|_{\cM_j}+\|\na_h(\pi_hU_j-U_j)\|_{\cM_j}\\
&\le&C\|ph^{-1/2}\lj U_j\rj\|_{\cE_j^{\rm side}}.
\een
Combining above estimates and using Lemma \ref{lem:3.3}, we conclude
\ben
& &\|a^{1/2}\na_h(U-\tilde U)\|_\cM\\
&\le&C\left(\sum^2_{i=1}\|\al^{1/2}\lj U_i\rj\|_{\cE_i^{\rm side}}+\|\al^{1/2}\lj U\rj\|_{\cE^\Ga}+\|\al^{1/2}(U-g)\|_{\cE^{\rm bdy}}\right)\\
&&+C\left(\|\hat a^{1/2}p^{-1}h^{1/2}\na_\Ga\lj U\rj\|_{\cE^\Ga}+\|\hat a^{1/2} p^{-1}h^{1/2}\na_{\pa\Om}(U-g)\|_{\cE^{\rm bdy}}\right).
\een
This completes the proof by \eqref{c2} and \eqref{nn1}. $\Box$
\end{proof}

To conclude this section we refer to Sacchi and Veeser \cite{Sacchi} for a different approach to deal with the non-homogeneous Dirichlet boundary condition in the finite element a posteriori error analysis where the localization of the $H^{1/2}$ semi-norm also plays a crucial role.

\section{A posteriori error estimation: efficiency}\label{sec:4}

In this section we derive the lower bound of the a posteriori error estimate proved in Theorem \ref{thm:3.1}
by using the domain inverse estimate in Lemma \ref{lem:2.4}. We start with the residual $R(U)$.

\begin{lemma}\label{lem:4.1}
For any $K\in\cM$, there exists a constant $C$ independent of $p$ and $K$ such that
\ben
& &(h_K/p)\|a^{-1/2}R(U)\|_{L^2(K)}\\
&\le&C\Theta_K^{1/2} \left(p\|a^{1/2}\na_h (u-U)\|_{L^2(K)}+(h_K/p)\|a^{-1/2}(f-f_K)\|_{L^2(K)}\right),
\een
where $f_K=P_K(f|_K)$, $P_K: L^2(K)\to Q_{p-1}(K)$ is the $L^2$ projection operator and $\Theta_K$ is defined in \eqref{e1}.
\end{lemma}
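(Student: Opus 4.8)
The plan is to adapt the classical bubble-function proof of residual efficiency to the curved interface. The obstruction is that on an interface element an element bubble cannot be made to vanish on the curved arc $\Ga_K\subset\pa K_i$; this is exactly what the domain inverse estimate of Lemma~\ref{lem:2.3} (equivalently Lemma~\ref{lem:2.4}) overcomes, and it is where the factor $\Theta_K^{1/2}$ enters. If $K\subset\Om_i$ then $\Theta_K=1$ and the ordinary $hp$ bubble argument on $K$ suffices, so assume $K\in\cM^\Ga$ and fix $i\in\{1,2\}$, $K_i=K\cap\Om_i$. Since $a$ is piecewise constant, $\mathrm{div}_h(a\na_h U)|_{K_i}=a_i\De U_i$ is a polynomial, so with $\rho:=f_K+\mathrm{div}_h(a\na_h U)$ we have $\rho|_{K_i}\in Q_p(K_i)$ and $R(U)|_{K_i}=(f-f_K)|_{K_i}+\rho|_{K_i}$; by the triangle inequality it suffices to bound $(h_K/p)\,a_i^{-1/2}\|\rho\|_{L^2(K_i)}$ by the right-hand side restricted to $K_i$ and then sum over $i$.

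Next I would transfer this $L^2$ norm onto a polygonal sub-region lying strictly inside $\Om_i$. Following the geometric decomposition in the proof of Lemma~\ref{lem:2.7}, I write the straight-sided model of $K_i$ as a union of shape-regular triangles (two for a curved trapezoid, one or three for a curved triangle and for the irregular large element; if $K$ contains a singular point of $\Ga$, I split $\Ga_K=\Ga_{1K}\cup\Ga_{2K}$ and argue arc by arc) and let $\om\subset K_i$ be the union of those triangles shrunk away from $\Ga_K^h$ by $\de=\dist_{\rm H}(\Ga_K,\Ga_K^h)$; in the notation of Lemma~\ref{lem:2.7}, $\om=\De A_iC'D'\cup\De A_iB'C'$, so that $\bar\om\subset K_i$ and $\om$ stays at positive distance from $\Ga$. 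Using the inclusion $K_i\subset\De A_iCD\cup\De A_iB''C''$ and applying Lemma~\ref{lem:2.3} in each of these triangles --- exactly as in the lines following \eqref{b2} --- yields
\[
\|\rho\|_{L^2(K_i)}\le C\,\mathsf{T}\!\left(\frac{1+3\eta_K}{1-\eta_K}\right)^{2p}\!\|\rho\|_{L^2(\om)}\le C\,\Theta_K^{1/2}\|\rho\|_{L^2(\om)},
\]
the bounded factor $\mathsf{T}\big((1+3\eta_K)/(1-\eta_K)\big)^{3/2}$ (recall $\eta_K$ is bounded away from $1$ by \eqref{HH}) being absorbed into $C$.

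On $\om$ I would run the standard $hp$ bubble argument. Triangulating $\om$ into the shape-regular triangles $\{T_j\}$, letting $b_{T_j}$ be the usual bubble ($0\le b_{T_j}\le1$, $b_{T_j}|_{\pa T_j}=0$) and setting $v:=\sum_j b_{T_j}\,\rho|_{T_j}$, one has $v\in H^1_0(\om)\subset H^1_0(\Om)$ (continuity across internal edges holds since every bubble vanishes there). Because $v$ vanishes on $\pa\om$, is supported in $\om\subset\Om_i$, and $U_i$ is a polynomial, testing the weak formulation of \eqref{p1}--\eqref{p3} with $v$ and integrating $\int_\om a_i\na U_i\cdot\na v$ by parts with no boundary contribution gives the clean identity
\[
\int_\om R(U)\,v=\int_\om a_i\na(u-U_i)\cdot\na v .
\]
No edge-jump or interface term appears; this is the simplification over the classical argument, and the reason the local lower bound involves only $R(U)$, $f-f_K$ and $\na_h(u-U)$. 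Combining this with $\int_\om\rho v=\sum_j\int_{T_j}b_{T_j}\rho^2$, the standard bubble-weighted norm equivalence, the inverse estimate $\|\na v\|_{L^2(T_j)}\le Cp^2h_K^{-1}\|\rho\|_{L^2(T_j)}$ and $\|v\|_{L^2(T_j)}\le\|\rho\|_{L^2(T_j)}$, and Cauchy--Schwarz on $\int_\om(f-f_K)v$, one obtains $a_i^{-1/2}\|\rho\|_{L^2(\om)}\le C(p^2h_K^{-1}\|a^{1/2}\na_h(u-U)\|_{L^2(\om)}+\|a^{-1/2}(f-f_K)\|_{L^2(\om)})$. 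Feeding in the bound of the previous paragraph, using $R(U)=(f-f_K)+\rho$ on $K_i$, multiplying by $h_K/p$, and absorbing lower-order terms (note $\Theta_K\ge1$) yields the asserted inequality on $K_i$; summing the $i=1,2$ contributions completes the proof.

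I expect the main obstacle to be the second step: making the geometric decomposition work uniformly over every admissible configuration of an interface element --- curved trapezoid, curved triangle, and above all the irregular large element containing a singular point of $\Ga$ --- so that Lemma~\ref{lem:2.3} applies with exactly the constant $\Theta_K^{1/2}$. This is essentially a reprise of the geometry in the proof of Lemma~\ref{lem:2.7}; the bubble-function bookkeeping of the third step is routine.
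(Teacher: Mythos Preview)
Your geometric reduction in the second step---transferring $\|\rho\|_{L^2(K_i)}$ onto a polygonal region $\om\subset K_i$ via Lemma~\ref{lem:2.3} at the cost of the factor $\Theta_K^{1/2}$---matches the paper, and the identity $\int_\om R(U)\,v=\int_\om a_i\na(u-U)\cdot\na v$ for $v\in H^1_0(\om)$ is correct. The gap is in your third step. The ``standard bubble-weighted norm equivalence'' $\|\rho\|_{L^2(T_j)}^2\le C\int_{T_j}b_{T_j}\rho^2$ for the fixed cubic bubble $b_{T_j}$ is \emph{not} uniform in the polynomial degree: for $\rho$ of degree comparable to $p$ the constant grows like a positive power of $p$, because a high-degree polynomial can concentrate its mass near $\pa T_j$ where $b_{T_j}$ vanishes. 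This is precisely the polynomial inverse estimate underlying Melenk--Wohlmuth~\cite{Melenk01}, and tracing the loss through your argument yields a factor $p^{1+\beta}$ (some $\beta>0$) in front of $\|a^{1/2}\na_h(u-U)\|$ instead of the claimed~$p$.

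The paper circumvents this by invoking the domain inverse estimate a \emph{second} time, now on the shape-regular triangle $\De\subset K_i$ itself. Rather than a polynomial bubble it takes a smooth cutoff $\chi_\eps\in C_0^\infty(\De)$ with $\chi_\eps\equiv1$ on $\De_\eps=\{x\in\De:\dist(x,\pa\De)>\eps\}$ and $|\na\chi_\eps|\le C\eps^{-1}$, and sets $v=\rho\chi_\eps$. Then $\|\rho\|_{L^2(\De_\eps)}^2\le\int_\De\rho^2\chi_\eps=\int_\De\rho\,v$ gives the left side directly (no norm equivalence needed), while $\|\na v\|_{L^2(\De)}\le C(p^2h_K^{-1}+\eps^{-1})\|\rho\|_{L^2(\De)}$. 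Choosing $\eps=c_0h_K(p+1)^{-2}$ balances both contributions as $p^2h_K^{-1}$, and Lemma~\ref{lem:2.4} closes the loop via $\|\rho\|_{L^2(\De)}\le(1+C\sqrt{\eps/h_K})^{2p+3/2}\|\rho\|_{L^2(\De_\eps)}\le C\|\rho\|_{L^2(\De_\eps)}$, the constant being bounded because $(1+C/p)^{2p}\le e^{2C}$. This second, $p$-adapted use of the domain inverse estimate replaces the bubble norm equivalence and is exactly what yields the $p$-robust constant advertised in the remark following Theorem~\ref{thm:4.1}; your outline uses Lemma~\ref{lem:2.3}/\ref{lem:2.4} only once, for the curved-to-straight transfer, and so misses this mechanism.
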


\begin{proof} Without loss of generality, we only consider the case when $\Ga$ intersects with $\pa K$ at two opposite sides. We also use the notation in Lemma \ref{lem:2.7}, see Figure \ref{fig:2.3}. Denote $V=f_K+\div_h(a\na_h U)$ in $K$. Since $K_i\subset\De A_iCD\cup\De A_iB''C''$, by Lemma \ref{lem:2.3},
\be\label{ll4}
\|V\|_{L^2(K_i)}&\le&\|V\|_{L^2(\De A_iCD)}+\|V\|_{L^2(\De A_iB''C'')}\nn\\
&\le&\|V\|_{L^2(\De A_iCD)}+\mathsf{T}\left(\frac{1+3\eta_K}{1-\eta_K}\right)^{2p+3/2}\|V\|_{L^2(\De)},
\ee
where $\De=\De A_iB'C'$ which is shape regular and $h_\De\ge Ch_K$. For any $\eps>0$ sufficiently small, denote $\De_\eps=\{x\in\De:\dist(x,\pa\De)>\eps\}$ and $\chi_\eps\in C^\infty_0(\De)$ the cut-off function such that $\chi_\eps=1$ in $\De_\eps$, $0\le\chi_\eps\le 1$, and $|\na\chi_\eps|\le C\eps^{-1}$ in $\De$.

Let $v=V\chi_\eps\in H^1_0(\De)\subset H^1_0(K_i)$. Since $\De\subset K_i$ in which $a=a_i$, by the domain inverse estimate in Lemma \ref{lem:2.4}
\be\label{g1}
\|V\|_{L^2(\De)}
\le C(1+C\sqrt{\eps/h_K})^{2p}\|V\|_{L^2(\De_\eps)}.
\ee
On the other hand, since the solution $u$ satisfies \eqref{p1}-\eqref{p3},
\ben
\|V\|_{L^2(\De_\eps)}^2
&\le&\int_{\De}V^2\chi_\eps dx\\
&=&\int_{\De}(f_K+\div(a\na U))vdx\\
&=&\int_{\De}(f_K-f)vdx+\int_{\De}a\na(u-U)\cdot\na vdx.
\een
Since $\na V\in Q_{p-2}(\De)$, by the inverse estimate,
\ben
\|\na v\|_{L^2(\De)}&\le&\|\na V\|_{L^2(\De)}+C\eps^{-1}\|V\|_{L^2(\De)}\\
&\le&C(p^2h_K^{-1}+\eps^{-1})\|V\|_{L^2(\De)}.
\een
Thus if we choose $\eps=c_0h_K(p+1)^{-2}$ for some constant $c_0>0$ depending possibly on $\de_0\in (0,1/2)$ in Definition \ref{def:2.1} so that $\eps<\rho_\De/2$, where $\rho_\De$ is the radius of the maximal inscribed circle of $\De$, we obtain
\ben
\|V\|_{L^2(\De_\eps)}^2&\le&C\|a^{-1/2}(f-f_K)\|_{L^2(\De)}\|a_i^{1/2}V\|_{L^2(\De)}\\
&+&Cp^2h_K^{-1}\|a^{1/2}\na (u-U)\|_{L^2(\De)}\|a_i^{1/2}V\|_{L^2(\De)}.
\een
Noticing that $(1+C\sqrt{\eps/h_K})^{2p}\le (1+Cp^{-1})^{2p}\le C$, by \eqref{g1} we have
\ben
& &(h_K/p)\|V\|_{L^2(\De)}\\&\le&C\Theta_K^{1/2}\left((h_K/p)\|a^{-1/2}(f-f_K)\|_{L^2(\De)}+p\|a^{1/2}\na(u-U)\|_{L^2(\De)}\right).
\een
A similar argument shows the same estimate holds when $\De$ is replaced by $\De A_iCD$. This completes the proof by \eqref{ll4}. $\Box$
\end{proof}

To derive a lower bound for the jump residual, we need the following extension lemma.

\begin{lemma}\label{lem:4.2}
Let $\D$ be a bounded Lipschitz domain in $\R^d$ $(d\ge 2)$. For any $g\in H^1(\pa\D)$ and any $\eps>0$, there exists a function $v\in H^1(\D)$ such that $v=g$ on $\pa\D$, and
\ben
\|v\|_{L^2(\D)}\le C\eps\|g\|_{L^2(\pa\D)},\ \ \|\na v\|_{L^2(\D)}\le C\eps^{-1}\|g\|_{L^2(\pa\D)}+C\eps |g|_{H^1(\pa\D)},
\een
where the constant $C$ depends on the Lipschitz constant of $\pa \D$ and is independent of $v$ and $\eps$.
\end{lemma}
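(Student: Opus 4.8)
The plan is to produce $v$ as a boundary layer of width $\eps^2$ that equals $g$ on $\pa\D$, obtained by transporting $g$ a distance $\sim\eps^2$ into $\D$ along a collar. First I would fix a bi-Lipschitz collar of $\pa\D$, i.e.\ a bi-Lipschitz homeomorphism $\Phi\colon\pa\D\times[0,\eps_\D)\to\overline U$ onto a relatively closed neighbourhood $U$ of $\pa\D$ in $\overline\D$ with $\Phi(y,0)=y$; such a $\Phi$, with bi-Lipschitz constants and collar width $\eps_\D$ depending only on the Lipschitz character of $\pa\D$, exists for any bounded Lipschitz domain and can be assembled from the standard Lipschitz graph charts of $\pa\D$ together with a partition of unity subordinate to the chart cover. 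We may assume $\eps^2<\eps_\D$, which is the only regime used later; the elementary changes for larger $\eps$ are noted at the end. Choosing a $1$-Lipschitz cutoff $\eta\colon[0,\infty)\to[0,1]$ with $\eta(0)=1$ and $\eta\equiv0$ on $[1,\infty)$ (e.g.\ $\eta(t)=\max(1-t,0)$), I would set $v(\Phi(y,t))=g(y)\,\eta(t/\eps^2)$ on $U$ and $v\equiv0$ on $\D\setminus U$. Then $\mathrm{supp}\,v\subset\Phi(\pa\D\times[0,\eps^2])\subset U$, so the zero extension is consistent; $v\in H^1(\D)$, being the composition with the bi-Lipschitz $\Phi$ of the $H^1(\pa\D\times(0,\eps_\D))$ function $(y,t)\mapsto g(y)\eta(t/\eps^2)$; and the trace of $v$ on $\pa\D$ equals $g$ because $\Phi(y,0)=y$ and $\eta(0)=1$.

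Next I would change variables by $\Phi$, whose Jacobian is bounded above and below by collar constants, and integrate. This gives at once
\[
\|v\|_{L^2(\D)}^2\le C\int_0^{\eps^2}\!\!\int_{\pa\D}|g(y)|^2\,d\sigma(y)\,dt=C\eps^2\|g\|_{L^2(\pa\D)}^2 .
\]
For the gradient, $(v\circ\Phi)(y,t)=g(y)\eta(t/\eps^2)$ has $\na_{(y,t)}(v\circ\Phi)=\big(\eta(t/\eps^2)\,\na_y g(y),\,\eps^{-2}g(y)\,\eta'(t/\eps^2)\big)$, whence, $\Phi^{-1}$ being Lipschitz, $|\na v(x)|\le C\big(|\na_{\pa\D}g(y)|+\eps^{-2}|g(y)|\big)$ for a.e.\ $x=\Phi(y,t)$ with $t<\eps^2$ (and $\na v=0$ elsewhere), so
\[
\|\na v\|_{L^2(\D)}^2\le C\int_0^{\eps^2}\!\!\int_{\pa\D}\big(|\na_{\pa\D}g|^2+\eps^{-4}|g|^2\big)\,d\sigma\,dt\le C\eps^2|g|_{H^1(\pa\D)}^2+C\eps^{-2}\|g\|_{L^2(\pa\D)}^2 .
\]
Taking square roots in the two displays yields exactly the asserted inequalities.

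For $\eps$ above a fixed geometric constant (a range not needed in this paper) one writes $g=\bar g+(g-\bar g)$ with $\bar g$ the mean of $g$ over (each component of) $\pa\D$, extends the constant part by a fixed bounded extension and $g-\bar g$ by the construction above at the scale $\eps_\D$, and converts the resulting $O(1)$ norms into the required powers of $\eps$ using the Poincar\'e inequality $\|g-\bar g\|_{L^2(\pa\D)}\le C|g|_{H^1(\pa\D)}$. I expect the only genuinely delicate ingredient to be the bi-Lipschitz collar and the change-of-variables bookkeeping with merely Lipschitz (hence a.e.\ differentiable) maps — this is exactly where the constant $C$ picks up its dependence on the Lipschitz character of $\pa\D$ — while everything after it, including the chain-rule manipulations above, is a routine computation valid almost everywhere.
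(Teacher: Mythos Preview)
Your proposal is correct and follows essentially the same route as the paper: build a boundary-layer extension of width $\eps^2$ in collar coordinates and read off the $L^2$ and gradient bounds by direct integration. The only cosmetic differences are that the paper works explicitly in local Lipschitz graph charts with a partition of unity (rather than invoking a single global bi-Lipschitz collar assembled from them) and uses the exponential profile $e^{-y_d/\eps^2}$ in place of your compactly supported cutoff $\eta(t/\eps^2)$; neither change affects the estimates or the mechanism.
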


\begin{proof} The proof depends on the classical argument of flattening the boundary. Since $\pa\D$ is Lipschitz continuous, there is a set of sub-domains $\{U_j\}^r_{j=1}$ that covers $\pa\D$ and a partition of unity $\{\phi_j\}^r_{j=1}$ subordinated to $\{U_j\}^r_{j=1}$, that is, $\phi_j\in C^\infty_0(U_j)$, $0\le\phi_j\le 1$, $\sum^r_{j=1}\phi_j=1$ in $\cup^r_{j=1}U_j$. Moreover, there exist bi-jective Lipschitz mappings $\Phi_j:U_j\to V_j$, $V_j\subset\R^d$, such that $\Phi_j(\D\cap U_j)=\R^d_+\cap V_j$ and $\Phi_j(U_j\cap\pa\D)=\pa\R^d_+\cap V_j$, $j=1,\cdots, r$, see e.g., Evans \cite[\S C.1]{Evans}. Here $\R^d_+=\{x\in\R^d:x_d>0\}$.

For any $y=(y',0)^T\in\pa\R^d_+\cap V_j$, $j=1,\cdots,r$, let $\hat g_j(y')=g(\Phi_j^{-1}(y'))$. We define the extension of $\hat g_j$ by
\ben
\hat v_j(y',y_d)=\hat g_j(y')e^{-\frac{y_d}{\eps^2}},\ \ \forall y=(y',y_d)^T\in V_j.
\een
It is easy to see that
\ben
& &\|\hat v_j\|_{L^2(V_j\cap\R^d_+)}\le\eps\|\hat g_j\|_{L^2(\pa\R^d_+\cap V_j)},\\
& &\|\na_y\hat v_j\|_{L^2(V_j\cap\R^d_+)}\le \eps |\hat g_j|_{H^1(\pa\R^d_+\cap V_j)}+\eps^{-1}\|\hat g_j\|_{L^2(\pa\R^d_+\cap V_j)}.
\een
This completes the proof by letting $v(x)=\sum ^r_{j=1}\hat v_j(\Phi_j(x))\phi_j(x), \forall x\in \D\cap\left(\cup_{j=1}^rU_j\right)$ and $v(x)=0, \forall x\in \D\bks\left(\cup^r_{j=1}\bar U_J\right)$. $\Box$
\end{proof}

For any $K\in\cM$, let $L_K=|\Ga_K|$ and $\Phi_K:(0,L_K)\to\Ga_K$ be the arc length parametrization of $\Ga_K$. We define the $L^2$ projection $P_{\Ga_K}:L^2(\Ga_K)\to Q_p(\Ga_K)=Q_p(0,L_K)\circ\Phi_K^{-1}$ as follows: For any $g\in L^2(\Ga_K)$, $P_{\Ga_K}g\circ\Phi_K\in Q_p(0,L_K)$ such that
\ben
\int_0^{L_K}(P_{\Ga_K}g\circ\Phi_K)vds=\int_0^{L_K}(g\circ\Phi_K)vds,\ \ \forall v\in Q_p(0,L_K).
\een

\begin{lemma}\label{lem:4.3}
For any $K\in\cM^\Ga$, there exists a constant $C$ independent of $p$ and $K$ such that
\ben
& &(h_K/p)^{1/2}\|\hat a^{-1/2}J(U)\|_{L^2(\Ga_K)}\nn\\
&\le&C(h_K/p)^{1/2}\|\hat a^{-1/2}(J(U)-J_{\Ga_K}(U))\|_{L^2(\Ga_K)}\\
&&+\,C\left(p\|a^{1/2}\na_h (u-U)\|_{L^2(K)}+\|a^{-1/2}(h/p)R(U)\|_{L^2(K)}\right),
\een
where $J_{\Ga_K}(U)=P_{\Ga_K}(J(U)|_{\Ga_K})$.
\end{lemma}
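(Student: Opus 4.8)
The plan is to run the standard residual-efficiency (local duality / bubble-function) argument, adapted to the curved segment $\Ga_K$ and to the $hp$ setting by means of the extension Lemma~\ref{lem:4.2} and the one-dimensional domain inverse estimate \eqref{ll}. Write $K_i=K\cap\Om_i$, and note that the function $\hat a$ is constant on $\Ga_K$; put $q=\hat a^{-1}J_{\Ga_K}(U)\in Q_p(\Ga_K)$. By the triangle inequality
\[
\|\hat a^{-1/2}J(U)\|_{L^2(\Ga_K)}\le\|\hat a^{-1/2}(J(U)-J_{\Ga_K}(U))\|_{L^2(\Ga_K)}+\|\hat a^{-1/2}J_{\Ga_K}(U)\|_{L^2(\Ga_K)},
\]
so it suffices to bound $(h_K/p)^{1/2}\|\hat a^{-1/2}J_{\Ga_K}(U)\|_{L^2(\Ga_K)}$ by the asserted right-hand side.

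First I would build a test function $v\in H^1_0(K)$ concentrated near $\Ga_K$. Choose a cutoff $\zeta$ on $\Ga_K$, in the arc-length variable, with $0\le\zeta\le1$, $\zeta\equiv1$ outside $\tau$-neighbourhoods of the two endpoints of $\Ga_K$, $\zeta=0$ at those endpoints, and $|\zeta'|\le C\tau^{-1}$, where $\tau=c_1h_K/p^2$ and $c_1$ is a small fixed constant; this is legitimate because $|\Ga_K|\ge ch_K$, a consequence of $K$ being large with respect to both $\Om_1$ and $\Om_2$. Extending $\zeta q$ by zero to $\pa K_i$, Lemma~\ref{lem:4.2} applied on $K_i$ with a parameter $\ep>0$ gives $w_i\in H^1(K_i)$ with $w_i=\zeta q$ on $\Ga_K$, $w_i=0$ on $\pa K_i\setminus\bar\Ga_K$, and
\[
\|w_i\|_{L^2(K_i)}\le C\ep\|\zeta q\|_{L^2(\Ga_K)},\qquad \|\na w_i\|_{L^2(K_i)}\le C\ep^{-1}\|\zeta q\|_{L^2(\Ga_K)}+C\ep\,|\zeta q|_{H^1(\Ga_K)}.
\]
Since the traces of $w_1$ and $w_2$ agree on $\Ga_K$ and both vanish on $\pa K$, the function $v:=w_1\chi_{\Om_1}+w_2\chi_{\Om_2}$ lies in $H^1_0(K)$. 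Testing the weak form of \eqref{p1}-\eqref{p3} with the zero extension of $v$, integrating by parts in $K_1$ and $K_2$ in the term involving $U$, and using that $\div(a\na U_i)=R(U)-f$ and $\div(a\na u)=-f$ in each $K_i$ together with the transmission condition $\lj a\na u\cdot\nu\rj_\Ga=0$, I arrive at the identity
\[
\int_{\Ga_K}J(U)v\,ds=\int_K R(U)v\,dx+\int_K a\na_h(U-u)\cdot\na v\,dx.
\]

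It then remains to estimate the three integrals. On the left, write $J(U)=(J(U)-J_{\Ga_K}(U))+J_{\Ga_K}(U)$; since $v|_{\Ga_K}=\zeta q$ and $\zeta$ keeps a fixed fraction of the mass of the degree-$p$ polynomial $J_{\Ga_K}(U)$ — this is where \eqref{ll} is used, the scale $\tau/|\Ga_K|\sim p^{-2}$ being chosen so that the factor in \eqref{ll} stays bounded uniformly in $p$ — one gets $\int_{\Ga_K}J_{\Ga_K}(U)\,\zeta q\,ds=\hat a^{-1}\int_{\Ga_K}\zeta\,|J_{\Ga_K}(U)|^2\,ds\ge c\|\hat a^{-1/2}J_{\Ga_K}(U)\|_{L^2(\Ga_K)}^2$, while the remaining term is bounded by $\|\hat a^{-1/2}(J(U)-J_{\Ga_K}(U))\|_{L^2(\Ga_K)}\,\|\hat a^{-1/2}J_{\Ga_K}(U)\|_{L^2(\Ga_K)}$. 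For the two volume integrals I use the bounds for $w_i$ above, the inverse inequality $|\zeta q|_{H^1(\Ga_K)}\le Cp^2h_K^{-1}\|q\|_{L^2(\Ga_K)}$, and $a_i\le C\hat a$ on $\Ga_K$ from \eqref{e1}, to obtain $\|a^{1/2}v\|_{L^2(K)}\le C\ep\|\hat a^{-1/2}J_{\Ga_K}(U)\|_{L^2(\Ga_K)}$ and $\|a^{1/2}\na v\|_{L^2(K)}\le C(\ep^{-1}+\ep p^2h_K^{-1})\|\hat a^{-1/2}J_{\Ga_K}(U)\|_{L^2(\Ga_K)}$. Inserting these in the identity, dividing by $\|\hat a^{-1/2}J_{\Ga_K}(U)\|_{L^2(\Ga_K)}$, multiplying by $(h_K/p)^{1/2}$ and choosing $\ep=(h_K/p)^{1/2}$ turns the coefficient of $\|a^{-1/2}R(U)\|_{L^2(K)}$ into $C(h_K/p)$ and that of $\|a^{1/2}\na_h(u-U)\|_{L^2(K)}$ into a quantity bounded by $Cp$; together with the triangle inequality from the first step this is the claimed estimate.

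The step I expect to be the main obstacle is the construction of $v$: it must vanish on all of $\pa K$, so that no edge residual on $\pa K$ pollutes the identity, yet still pair with $J_{\Ga_K}(U)$ with a lower bound that does not degrade in $p$. This is what forces the cutoff $\zeta$ near the two endpoints of $\Ga_K$, and the delicate point is calibrating its width $\tau\sim h_K/p^2$: this choice makes \eqref{ll} lose only a $p$-independent factor while simultaneously keeping $|\zeta q|_{H^1(\Ga_K)}$ of the optimal order $p^2h_K^{-1}\|q\|_{L^2(\Ga_K)}$. A secondary, purely bookkeeping difficulty is to track the weight $\hat a$ and the powers of $p$ through Lemma~\ref{lem:4.2} so that the final choice $\ep=(h_K/p)^{1/2}$ produces precisely the factors $h/p$ and $p$ appearing in the statement.
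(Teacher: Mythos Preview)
Your argument is correct and follows the same strategy as the paper: a cutoff of arc-length width $\sim h_K/p^2$ near the endpoints of $\Ga_K$, the one-dimensional domain inverse estimate \eqref{ll} to recover the full $L^2(\Ga_K)$ norm from the cut-off portion, Lemma~\ref{lem:4.2} with $\ep=(h_K/p)^{1/2}$ to build the $H^1_0(K)$ test function, and the same integration-by-parts identity.

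The one place where you diverge from the paper deserves a word of caution. You invoke Lemma~\ref{lem:4.2} directly on $K_i$, but the constant there depends on the Lipschitz character of $\pa K_i$, and the angle at which $\Ga_K$ meets a side of $K$ is \emph{not} controlled by the large-element condition; in degenerate configurations $K_i$ can even develop a cusp, so the constant is not uniform in $K$. The paper avoids this by applying Lemma~\ref{lem:4.2} on the fixed domain $\Om_i$ (extending the boundary data by zero to all of $\Ga$) and then localising to $K$ by multiplying with a two-dimensional cutoff $\chi_\si\in C_0^\infty(K)$ of width $\si\sim h_K/p^2$; your one-dimensional cutoff $\zeta$ is then just the restriction of $\chi_\si$ to $\Ga_K$. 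With this modification all of your estimates go through unchanged and the uniformity in $K$ is automatic.
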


\begin{proof} Let $\si=c_0(p+1)^{-2}h_K$ for some constant $c_0>0$ such that $\si$ is less than half of the minimum length of the sides of $K$, and denote $K_\si=\{x\in K:\dist(x,\pa K)>\si\}$. Let $(t_1,t_2)\subset (0,L_K)$ such that $\Phi_K$ maps $(t_1,t_2)$ to $\Ga_K\cap K_\si$. Obviously, $t_1\le C_1\si, L_K-t_2\le C_2\si$ for some constants $C_1,C_2>0$. Since $J_{\Ga_K}\circ\Phi_K^{-1}\in Q_p(0,L_K)$, we use the domain inverse Lemma \ref{lem:2.2} to obtain
\be\label{f1}
\|J_{\Ga_K}(U)\|_{L^2(\Ga_K)}&\le&C\|J_{\Ga_K}(U)\circ\Phi_K^{-1}\|_{L^2(0,L_K)}\nn\\
&\le&C\mathsf{T}(1+C\si/L_K)^{2p+1}\|J_{\Ga_K}(U)\circ\Phi_K^{-1}\|_{L^2(t_1,t_2)}\nn\\
&\le&C\|J_{\Ga_K}(U)\|_{L^2(\Ga_K\cap K_\si)},
\ee
where we have used the fact that $\mathsf{T}(\lam)=1+\sqrt{\lam-1}(\sqrt{\lam-1}+\sqrt{1+\lam})$ and $(1+C\sqrt{\si/h_K})^{2p+1}=(1+Cp^{-1})^{2p+1}\le C$ for some constant $C$ independent of $p$.

Since $J_{\Ga_K}(U)\circ\Phi_K^{-1}\in Q_p(0,L_K)$, by the inverse estimate we have
\be\label{f2}
\|\na_\Ga J_{\Ga_K}(U)\|_{L^2(\Ga_K)}
&\le&C|J_{\Ga_K}(U)\circ\Phi_K^{-1}|_{H^1(0,L_K)}\nn\\
&\le&Cp^2L_K^{-1}\|J_{\Ga_K}(U)\circ\Phi_K^{-1}\|_{L^2(0,L_K)}\nn\\
&\le&Cp^2h_K^{-1}\|J_{\Ga_K}(U)\|_{L^2(\Ga_K)}.
\ee
Let $\chi_\si\in C^\infty_0(K_\si)$ be the cut-off function satisfying $\chi_\si=1$ in $K_{\si}$, $0\le\chi\le 1$, $|\na \chi_\si|\le C\si^{-1}$ in $K$. Let $v_\si\in H^1(\Om)$ be such that $v_\si|_{\Om_i}\in H^1(\Om_i), i=1,2,$ is the extension of $J_{\Ga_K}(U)\chi_\si\in H^1(\Ga)$ defined in Lemma \ref{lem:4.2} with $\eps=\sqrt{h_K/p}$, then
\ben
\|v_\si\|_{L^2(\Om_i)}\le C(h_K/p)^{1/2}\|J_{\Ga_K}(U)\chi_\si\|_{L^2(\Ga)}\le C(h_K/p)^{1/2}\|J_{\Ga_K}(U)\|_{L^2(\Ga_K)},
\een
and
\ben
& &\|\na v_\si\|_{L^2(\Om_i)}\\
&\le&C(h_K/p)^{-1/2}\|J_{\Ga_K}(U)\chi_\si\|_{L^2(\Ga_K)}+C(h_K/p)^{1/2}\|\na_\Ga(J_{\Ga_K}(U)\chi_\si)\|_{L^2(\Ga_K)}\\
&\le&Cp(h_K/p)^{-1/2}\|J_{\Ga_K}(U)\|_{L^2(\Ga_K)},
\een
where we have used \eqref{f2} in the last inequality. Let $w_\si=v_\si\chi_\si$. Then $w_\si\in H^1_0(K)$ and satisfies
\be
& &\|w_\si\|_{L^2(K)}\le C(h_K/p)^{1/2}\|J_{\Ga_K}(U)\|_{L^2(\Ga_K)},\label{f3} \\
& &\|\na w_\si\|_{L^2(K)}\le Cp(h_K/p)^{-1/2}\|J_{\Ga_K}(U)\|_{L^2(\Ga_K)}.\label{f4}
\ee
Now by \eqref{f1}
\ben
\|J_{\Ga_K}(U)\|^2_{L^2(\Ga_K)}&\le&C\int_{\Ga_K}|J_{\Ga_K}(U)|^2\chi_\si^2=C
\int_{\Ga_K}J_{\Ga_K}(U)\cdot w_\si.
\een
By using the equation \eqref{p1}-\eqref{p3} and integration by parts
\ben
\|J_{\Ga_K}(U)\|^2_{L^2(\Ga_K)}&\le&C\|J(U)-J_{\Ga_K}(U)\|_{L^2(\Ga_K)}\|w_\si\|_{L^2(\Ga_K)}+\int_{\Ga_K}J(U)w_\si \\
&\le&C\|(h/p)^{1/2}(J(U)-J_{\Ga_K}(U))\|_{L^2(\Ga_K)}\|J_{\Ga_K}(U)\|_{L^2(\Ga_K)}\\
&&+\,\int_Ka\na_h(U-u)\cdot\na w_\si
-\int_KR(U)w_\si.
\een
This completes the proof by using \eqref{f3}-\eqref{f4}. $\Box$
\end{proof}

The following lemma can be proved by the method in Lemma \ref{lem:4.3}. We omit the details.

\begin{lemma}\label{lem:4.4}
For any $e\in\cE^{\rm side}$, $e=\pa K\cap\pa K'$, $K,K'\in\cM$, we have
\ben
& &\|\hat a^{-1/2}(h/p)^{1/2}J(U)\|_{L^2(e)}\\
&\le&C\|\hat a^{-1/2}(h/p)^{1/2}(J(U)-P_eJ(U))\|_{L^2(e)}+Cp\|a^{1/2}\na_h(u-U)\|_{L^2(K\cup K')}\\
&&+\,C\|a^{-1/2}(h/p)R(U)\|_{L^2(K\cup K')}+C\|\hat a^{-1/2}(h/p)^{1/2}J(U)\|_{L^2(\Ga_K\cup\Ga_{K'})},
\een
where $P_e:L^2(e)\to Q_p(e)$ is the $L^2$ projection operator.
\end{lemma}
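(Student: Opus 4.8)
The plan is to reproduce the edge-bubble argument used for Lemma~\ref{lem:4.3}, with the local interface $\Ga_K$ there replaced by the interior side $e=\pa K\cap\pa K'$ and the bubble now supported in the macro-domain $\omega:=(\bar K\cup\bar K')^\circ$. Write $h=h|_e$; by the $K$-mesh property this is comparable to $h_K$, $h_{K'}$ and $|e|$ (with hanging nodes, $e$ may be only part of a side of the larger of $K,K'$, which is immaterial). By the triangle inequality it is enough to bound $\|\hat a^{-1/2}(h/p)^{1/2}P_eJ(U)\|_{L^2(e)}$, where $P_eJ(U)\in Q_p(e)$ is the polynomial part of the jump. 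Put $\si=c_0(p+1)^{-2}h$, with $c_0$ small enough that $\si$ is less than half the length of $e$ and of every side of $K$ and $K'$; let $e_\si$ be $e$ with the $\si$-neighbourhoods of its two endpoints removed, and let $\chi_\si\in C^\infty_0(\omega)$ be a cut-off with $\chi_\si=1$ on $e_\si$, $0\le\chi_\si\le1$, $|\na\chi_\si|\le C\si^{-1}$, and $\chi_\si$ vanishing near $\pa\omega$. The one-dimensional domain inverse estimate~\eqref{ll} applied to $P_eJ(U)$ on $e$, together with $(1+C\sqrt{\si/h})^{2p+1}=(1+Cp^{-1})^{2p+1}\le C$, gives $\|P_eJ(U)\|_{L^2(e)}\le C\|P_eJ(U)\|_{L^2(e_\si)}$ with $C$ independent of $p$; the standard one-dimensional inverse estimates also give $\|\na_eP_eJ(U)\|_{L^2(e)}\le Cp^2h^{-1}\|P_eJ(U)\|_{L^2(e)}$ and $\|P_eJ(U)\|_{L^\infty(e)}\le Cph^{-1/2}\|P_eJ(U)\|_{L^2(e)}$.

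Next I construct the bubble by applying Lemma~\ref{lem:4.2} separately to the rectangles $K$ and $K'$, each time with boundary datum $P_eJ(U)\chi_\si^2$ on $e$ (extended by $0$ to the rest of the boundary; this lies in $H^1$ of the boundary because $\chi_\si$ vanishes at the endpoints of $e$) and with parameter $\ep=\sqrt\si$, so that the two extensions are concentrated in an $O(\si)$-neighbourhood of $e$. Gluing them along $e$ produces $w_\si\in H^1_0(\omega)$ with $w_\si=P_eJ(U)\chi_\si^2$ on $e$ and
\ben
&&\|w_\si\|_{L^2(\omega)}\le C\si^{1/2}\|P_eJ(U)\|_{L^2(e)},\qquad \|\na w_\si\|_{L^2(\omega)}\le C\si^{-1/2}\|P_eJ(U)\|_{L^2(e)},\\
&&\|w_\si\|_{L^2(\Ga_K\cup\Ga_{K'})}\le C\|P_eJ(U)\|_{L^2(e)},
\een
where the gradient bound absorbs $\si^{1/2}|P_eJ(U)\chi_\si^2|_{H^1(e)}$ via $|\na\chi_\si|\le C\si^{-1}$ and the inverse and $L^\infty$ estimates above, and the trace bound on $\Ga_K\cup\Ga_{K'}$ follows from Lemma~\ref{lem:2.5}, the first two bounds, and $\|w_\si\|_{L^2(\pa K\setminus\bar\Ga_K)}\le\|P_eJ(U)\chi_\si^2\|_{L^2(e)}$. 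The choice $\ep=\sqrt\si$ (rather than $\ep=\sqrt{h/p}$ as in Lemma~\ref{lem:4.3}) is exactly what keeps this last bound free of powers of $p$.

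Now
\[
\|P_eJ(U)\|_{L^2(e_\si)}^2\le\int_e|P_eJ(U)|^2\chi_\si^2=\int_e\big(P_eJ(U)-J(U)\big)w_\si+\int_eJ(U)w_\si,
\]
and integrating the last term by parts over $K$ and over $K'$ — using $-\div(a\na u)=f$, the transmission condition $\lj a\na u\cdot\bn\rj_\Ga=0$, the continuity of $a\na u\cdot\bn$ across the interior side $e$ (valid because $a\na u\in H(\div;\Om)$), and $w_\si=0$ on $\pa\omega$ — one obtains
\[
\int_eJ(U)w_\si=\int_{K\cup K'}a\na_h(U-u)\cdot\na w_\si+\int_{K\cup K'}R(U)\,w_\si-\int_{\Ga_K\cup\Ga_{K'}}J(U)\,w_\si,
\]
where $\Ga_K$ is read as empty if $K\notin\cM^\Ga$, and likewise $\Ga_{K'}$. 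The $\Ga_K\cup\Ga_{K'}$ integral is the one genuine departure from Lemma~\ref{lem:4.3}, whose bubble sits inside a single interface element, and it is the source of the last term in the claimed estimate. Applying Cauchy--Schwarz term by term, inserting the bounds on $w_\si$ together with $a\le C\hat a_e$ on $K\cup K'$ and $\|w_\si\|_{L^2(e)}\le\|P_eJ(U)\|_{L^2(e)}$, and then carrying out the routine rearrangement — dividing out one power of $\|P_eJ(U)\|_{L^2(e)}$, restoring the weight $\hat a_e^{-1/2}(h/p)^{1/2}$, and invoking the domain inverse estimate of the first step and the initial triangle inequality — yields the asserted bound.

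The main work, and the only genuinely new point compared with Lemma~\ref{lem:4.3}, is this last bookkeeping: keeping every constant independent of $p$ and of the jumps in $a$ while correctly tracking the interface contributions. In particular one must observe that element-wise integration by parts over $K$ and $K'$ produces boundary integrals on $\Ga_K$ and $\Ga_{K'}$ in addition to the one on $e$ (this is what forces the extra $\|\hat a^{-1/2}(h/p)^{1/2}J(U)\|_{L^2(\Ga_K\cup\Ga_{K'})}$ term), and one must take $\ep$ in Lemma~\ref{lem:4.2} small enough, $\ep\simeq\sqrt\si$, that this term enters with a constant independent of $p$. Everything else is, as the paper indicates, a transcription of the proof of Lemma~\ref{lem:4.3}.
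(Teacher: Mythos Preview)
Your argument is correct and follows the route the paper indicates (``the method of Lemma~\ref{lem:4.3}''): domain inverse estimate on $e$, an $H^1$-extension via Lemma~\ref{lem:4.2} localized to the patch $\omega=K\cup K'$, integration by parts against the equation, and Cauchy--Schwarz. You correctly identify the one genuinely new feature relative to Lemma~\ref{lem:4.3}, namely that elementwise integration by parts over $K$ and $K'$ produces additional boundary contributions on $\Ga_K$ and $\Ga_{K'}$ whenever these elements are interface elements, and you handle this via Lemma~\ref{lem:2.5} to bound $\|w_\si\|_{L^2(\Ga_K\cup\Ga_{K'})}$.

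The one point worth highlighting is your choice of extension parameter. In Lemma~\ref{lem:4.3} the paper takes $\ep=\sqrt{h_K/p}$; with that choice here one would only get $\|w_\si\|_{L^2(\Ga_K)}\le Cp^{1/2}\|P_eJ(U)\|_{L^2(e)}$ from the multiplicative trace bound, which would contaminate the interface term by an extra $p^{1/2}$. Your smaller choice $\ep=\sqrt\si\sim p^{-1}h^{1/2}$ gives $\|w_\si\|_{L^2}\|\na w_\si\|_{L^2}\le C\|P_eJ(U)\|_{L^2(e)}^2$ and hence a $p$-independent trace constant on $\Ga_K\cup\Ga_{K'}$, at the harmless cost of sharper (hence still admissible) constants $p^{1/2}$ and $(h/p)^{3/2}$ in front of the gradient and residual terms. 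Since the paper omits the details of Lemma~\ref{lem:4.4}, this adjustment is precisely the kind of thing one has to supply; you have done so cleanly.
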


Let $\mathbb{P}:\Pi_{K\in\cM}L^2(K)\to\V_{p-1}(\cM)$ be defined elementwise as $\mathbb{P}|_K=P_K$
and $\mathbb{Q}:\Pi_{e\in\cE}L^2(e)\to Q_p(e)$ be defined as $\mathbb{Q}|_e=P_e$.

The following theorem which is the main result of this section can be proved by combining
Lemma \ref{lem:4.1}, Lemma \ref{lem:4.3} and Lemma \ref{lem:4.4}.

\begin{theorem}\label{thm:4.1}
Let $u\in H^1(\Om)$ be the weak solution of \eqref{p1}-\eqref{p3} and $U\in\mathbb{X}_p(\cM)$ be the solution of \eqref{a2}. We have
\ben
& &\|a^{-1/2}\Theta^{-1}(h/p)R(U)\|_\cM+\|\al^{-1/2}(h/p)^{1/2}J(U)\|_\cE\\
&\le&C\left(p\|a^{1/2}\na_h(u-U)\|_\cM+{\rm osc}(f,U,\Ga)\right).
\een
where ${\rm osc}(f,U,\Ga)$ is the data oscillation defined as
\ben
{\rm osc}(f,U,\Ga)=\|a^{-1/2}(h/p)(f-\mathbb{P} f)\|_{\cM}+\|\hat a^{-1/2}(h/p)^{1/2}(J(U)-\mathbb{Q}J(U))\|_\cE.
\een
\end{theorem}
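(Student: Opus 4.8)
The theorem is the assembly of the three local efficiency estimates already established: Lemma~\ref{lem:4.1} for the element residual, Lemma~\ref{lem:4.3} for the interface jump on $\cE^\Ga$, and Lemma~\ref{lem:4.4} for the jump on the interior sides $\cE^{\rm side}$ (there is no jump residual on $\cE^{\rm bdy}$, so $\|\cdot\|_\cE$ is read there as a norm over $\cE^{\rm side}\cup\cE^\Ga$). The plan is: (i) dominate the residual term directly by Lemma~\ref{lem:4.1}; (ii) convert the $\hat a^{-1/2}$-weighted jump bounds of Lemmas~\ref{lem:4.3}--\ref{lem:4.4} into $\al^{-1/2}$-weighted ones, feeding in the raw Lemma~\ref{lem:4.1} bound wherever $\|a^{-1/2}(h/p)R(U)\|$ appears on the right of those lemmas and feeding in the $\cE^\Ga$ estimate wherever $\|\hat a^{-1/2}(h/p)^{1/2}J(U)\|_{L^2(\Ga_K\cup\Ga_{K'})}$ appears in Lemma~\ref{lem:4.4}; and (iii) square everything and sum over $\cM$ and over $\cE^{\rm side}\cup\cE^\Ga$ using the bounded overlap of the local patches.

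For step (i), applying Lemma~\ref{lem:4.1} on $K\in\cM$ and using $\Theta_K\ge 1$, so that $\Theta_K^{-1}\le\Theta_K^{-1/2}$, gives, with $\mathbb{P}f|_K=f_K$,
\ben
\|a^{-1/2}\Theta_K^{-1}(h_K/p)R(U)\|_{L^2(K)}&\le&C\Big(p\|a^{1/2}\na_h(u-U)\|_{L^2(K)}+(h_K/p)\|a^{-1/2}(f-\mathbb{P}f)\|_{L^2(K)}\Big).
\een
This bound is purely local, so squaring and summing over $K\in\cM$ yields the bound on $\|a^{-1/2}\Theta^{-1}(h/p)R(U)\|_\cM$ by the right-hand side of the theorem.

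For steps (ii)--(iii), observe first from \eqref{xx0} that $\al^{-1/2}|_e=\al_0^{-1/2}\hat a_e^{-1/2}\hat\Theta_e^{-1/2}h_e^{1/2}p^{-1}$, whence $\al^{-1/2}(h_e/p)^{1/2}=\al_0^{-1/2}\hat a_e^{-1/2}\hat\Theta_e^{-1/2}h_e\,p^{-3/2}$, so that, using $h_e\le\diam\Om$ and $p\ge 1$,
\ben
\al^{-1/2}(h_e/p)^{1/2}&\le&C\,\hat\Theta_e^{-1/2}(h_e/p)^{1/2}\hat a_e^{-1/2}\qquad\forall\,e\in\cE.
\een
On $e=\Ga_K\in\cE^\Ga$ I would invoke Lemma~\ref{lem:4.3}, replace $\|a^{-1/2}(h/p)R(U)\|_{L^2(K)}$ on its right by the Lemma~\ref{lem:4.1} bound (which carries a factor $\Theta_K^{1/2}$), and use $\hat\Theta_{\Ga_K}\ge\Theta_K$, so that $\hat\Theta_{\Ga_K}^{-1/2}\Theta_K^{1/2}\le 1$ and $\hat\Theta_{\Ga_K}^{-1/2}\le 1$; since $J_{\Ga_K}(U)=\mathbb{Q}J(U)|_{\Ga_K}$, this yields
\ben
\|\al^{-1/2}(h/p)^{1/2}J(U)\|_{L^2(\Ga_K)}&\le&C\|\hat a^{-1/2}(h/p)^{1/2}(J(U)-\mathbb{Q}J(U))\|_{L^2(\Ga_K)}\\
& &+\,C\big(p\|a^{1/2}\na_h(u-U)\|_{L^2(K)}+\|a^{-1/2}(h/p)(f-\mathbb{P}f)\|_{L^2(K)}\big).
\een
On $e=\pa K\cap\pa K'\in\cE^{\rm side}$ I would invoke Lemma~\ref{lem:4.4} (noting $P_eJ(U)=\mathbb{Q}J(U)|_e$), substituting both the Lemma~\ref{lem:4.1} bound for its $R(U)$ term and the estimate just obtained for its $\|\hat a^{-1/2}(h/p)^{1/2}J(U)\|_{L^2(\Ga_K\cup\Ga_{K'})}$ term; since $e$ touches both $K$ and $K'$ one has $\hat\Theta_e\ge\max(\Theta_K,\Theta_{K'})$, so all $\Theta$-powers cancel again and only the side oscillation plus the error and data-oscillation terms over $K\cup K'$ survive. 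Finally, squaring and summing over all $e\in\cE^{\rm side}\cup\cE^\Ga$, and using that each $K\in\cM$ belongs to only boundedly many patches $K\cup K'$ and each $\Ga_K$ to boundedly many $\Ga_K\cup\Ga_{K'}$ — a consequence of the quad-refinement structure of $\cM$ and the $K$-mesh assumption (H3) — gives the claimed bound on $\|\al^{-1/2}(h/p)^{1/2}J(U)\|_{\cE^{\rm side}\cup\cE^\Ga}$. Adding the residual estimate of step (i) finishes the proof.

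The main obstacle is precisely this weight bookkeeping. The curved-domain inverse estimate forces Lemma~\ref{lem:4.1} to lose a factor $\Theta_K^{1/2}$, so the element residual is not efficient in an unweighted sense; the final estimate survives — and stays robust in $p$ up to the single power of $p$ — only because the penalty $\al$ was chosen in \eqref{xx0} with the matching factor $\hat\Theta_e$, together with the monotonicity $\hat\Theta_e\ge\Theta_K$ whenever $\bar e\cap\bar K\ne\emptyset$. Everything else is a routine Cl\'ement/finite-overlap summation.
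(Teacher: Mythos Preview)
Your proposal is correct and follows exactly the route the paper indicates: the paper's own proof consists of the single sentence ``can be proved by combining Lemma~\ref{lem:4.1}, Lemma~\ref{lem:4.3} and Lemma~\ref{lem:4.4}'', and what you have written is precisely that combination with the $\Theta$-weight bookkeeping made explicit. Your observations that $\hat\Theta_e\ge\Theta_K$ whenever $e\cap\bar K\ne\emptyset$ (so the $\Theta_K^{1/2}$ appearing on the right of Lemma~\ref{lem:4.1} is absorbed by the $\hat\Theta_e^{-1/2}$ coming from $\al^{-1/2}$) and that the finite overlap of $K$-mesh patches permits the global summation are the only ingredients needed beyond the three lemmas, and both are correct.
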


We remark that the factor $p$ in the front of $\|a^{1/2}\na_h(u-U)\|_\cM$ is well-known for residual type $hp$ a posteriori error estimates, see Melenk and Wohlmuth \cite{Melenk01}, in which $hp$ a posteriori error estimation was first studied for elliptic equations on conforming meshes based on polynomial inverse estimates. Our argument is different by using the domain inverse estimate and is slightly better in the sense that the additional factor $p^\eps$ in the local lower bound in \cite{Melenk01} is removed in our analysis.

\section{Numerical examples}\label{sec:5}

In this section, we present several numerical examples to illustrate the performance of the proposed adaptive unfitted finite element method.
The computations are carried out using MATLAB on a workstation with Intel(R) i9-9900 CPU 2.70GHz and 64GB memory. The basis functions of $Q_p(K)$ are the Lagrange interpolation polynomials
through the local Gauss-Lobatto-Legendre (GLL) integration points in each element $K$.

For each $K\in\cM$, we compute the local a posteriori error estimator $\xi_K$ as in Theorem \ref{thm:3.1} and define
the global a posteriori error estimate $\eta=\left ( \sum_{K\in \cM} \xi_K^2 \right )^{1/2}$.

We first describe the adaptive unfitted finite element algorithm.

\bigskip
{\bf Algorithm 5.1} Given a tolerance TOL $>0$, $N_0\ge 1$ a fixed number, and an initial conforming Cartesian mesh $\cT$.
\begin{enumerate}
	\item
	Construct the induced mesh $\cM$ by Algorithm 5.2 so that each element $K$ in $\cM$ is large with respect to both $\Om_1$, $\Om_2$ and satisfies \eqref{HH}.
	\item
	Solve the discrete problem \eqref{a2} on $\mathcal{M}$.
	\item
	Compute the local error estimator $\xi_K$ on each $K\in \mathcal{M}$ and the global error estimate $\eta$.
	\item
	While  $\  \eta >  \mbox{TOL } $   do
	\begin{itemize}
		\item
		Mark the elements in $\hat{\mathcal{M}} \subset \cM$ such that:
		\begin{equation*}
		\left ( \sum_{K\in \hat{\mathcal{M}}} \xi_K^2 \right )^{1/2} \geq \frac{1}{2}\,\eta.
		\end{equation*}
		\item
		Refine the elements in $\hat\cT=\{K\in\cT:K\subset K', K'\in\hat\cM\}$ by quad refinement to obtain a new mesh $\dhat\cT$.
		\item
		Refine $\dhat\cT$ to obtain a new mesh $\cT$ such that each side of $\cT$ includes at most $N_0$ hanging nodes, which makes $\cT$ a $K$-mesh satisfying the Assumption (H3).
		\item
		Construct the induced mesh $\cM$ by Algorithm 5.2 so that each element $K\in\cM$ is large with respect to both $\Om_1$, $\Om_2$, and $\cM$ satisfies \eqref{HH}.
		\item
		Solve the discrete problem \eqref{a2} on $\cM$.
		\item
		Compute the local error estimator $\xi_K$ on each $K \in \cM$ and the global
		error estimate $\eta$.
	\end{itemize}
	end while
\end{enumerate}

The following algorithm is used to construct the induced mesh $\cM$ from a Cartesian mesh $\cT$ so that each element $K$ in $\cM$ is large with respect to both $\Om_1$, $\Om_2$ and $\cM$ satisfies \eqref{HH}.
We use the notation $\cT_i^{\rm large}:=\{K\in\cT: \text{$K$ is large with respect to $\Om_i$}\}, i=1,2$, according to Definition 2.1
with the parameter $\de_0\in (0,1/2)$.

\bigskip
{\bf Algorithm 5.2} Given $\delta_0\in(0,1/2)$, $N_0\ge 1$ a fixed number, and a Cartesian mesh $\cT$. 
\begin{enumerate}
	\item
	Mark all small elements in $\cT_{\rm small}\subset\cT$, where
	\begin{equation*}
	\cT_{\rm small}=\{K\in\cT: K\cap\Ga\not=\emptyset,K\not\in\cT_1^{\rm large}\cap\cT_2^{\rm large}\}.
	\end{equation*}
	\item
	If $\cT_{\rm small}\not=\emptyset$, for each $K\in\cT_{\rm small}$, $K\not\in\cT_i^{\rm large}$, $i=1,2$, do 
		\begin{itemize}
			\item If $K$ has a neighboring element $K'\in\cT^{\rm large}_i$ whose size is the same as that of $K$ and the minimum rectangle containing $K,K'$ is large with respect to $\Om_i$, then merge $K$ and $K'$.
			\item Else if $K$ has a neighboring element $K'\in\cT^{\rm large}_i$ whose size is larger than that of $K$, add $K'$ to $\cT_{\rm refine}$.
			\item Else if $K$ has a neighboring element $K'\in\cT^{\rm large}_i$ whose size is smaller than that of $K$, add $K$ to $\cT_{\rm refine}$.
			\item Otherwise, add $K$ and all its neighboring elements in $\cT_i^{\rm large}$ to $\cT_{\rm refine}$ .
		\end{itemize}
	\item 
	While $K\in\cT\backslash\cT_{\rm small}$, $\eta_K>\max(1/2,(1-\de_0)/(1+\de_0))$, do $i=1,2$
	     \begin{itemize}
	         \item If $K$ does not include singular points of $\Ga$ or $K$ is an irregular large element with respect to $\Om_i$, add $K$ to $\cT_{\rm refine}$.
		\item Else if $K$ has two vertices in $\Om_i$ and there exists a neighboring element $K'\subset\Om_i$ whose size is the same as that of $K$, then merge $K$ and $K'$.
		\item Else if $K$ has three vertices in $\Om_i$ and there exist three neighboring elements $K',K'',K'''\subset\Om_i$ whose sizes are the same as that of $K$, then merge $K$ and $K',K'',K'''$.
		\item Otherwise, add the elements with the largest size among $K$ and its neighboring elements to $\cT_{\rm refine}$. 
	     \end{itemize}
	end while
	\item
	If $\cT_{\rm refine}\neq\emptyset$, refine the elements in  $\cT_{\rm refine}$ and their neighboring elements to obtain a new mesh $\cT$ such that each side of $\cT$ includes at most $N_0$ hanging nodes, go to 1.
\end{enumerate} 

We remark that if each side of a mesh $\cT$ includes at most $N_0$ hanging nodes, the induced mesh $\cM$ from $\cT$ by Algorithm 5.2 is also a $K$-mesh with the constant $C$ in Assumption (H3) depending only on $N_0$. 

Now we present three examples to demonstrate the efficiency of our adaptive algorithm. We consider the case of high contrast coefficient $a(x)$ in Example 2 and the case of non-smooth interface in Example 3. 

In all examples we set the computational domain $\Omega = (-2,2)\times(-2,2)$. In our theory, the penalty parameter $\al_0$ can be any fixed positive constant and the constant $\de_0$ in Definition \ref{def:2.1} can be any constant in $(0,1/2)$. Clearly, a larger $\de_0$ will lead to more small elements to be merged with neighboring elements. Here we take the natural choice $\alpha_0=1$ and $\delta_0=1/4$. We always set the maximal number of hanging nodes in each side of the mesh $N_0=3$. 

\begin{figure}[htb]
	\begin{center}
		\begin{tabular}{ccc}
			\epsfxsize=0.3\textwidth\epsffile{./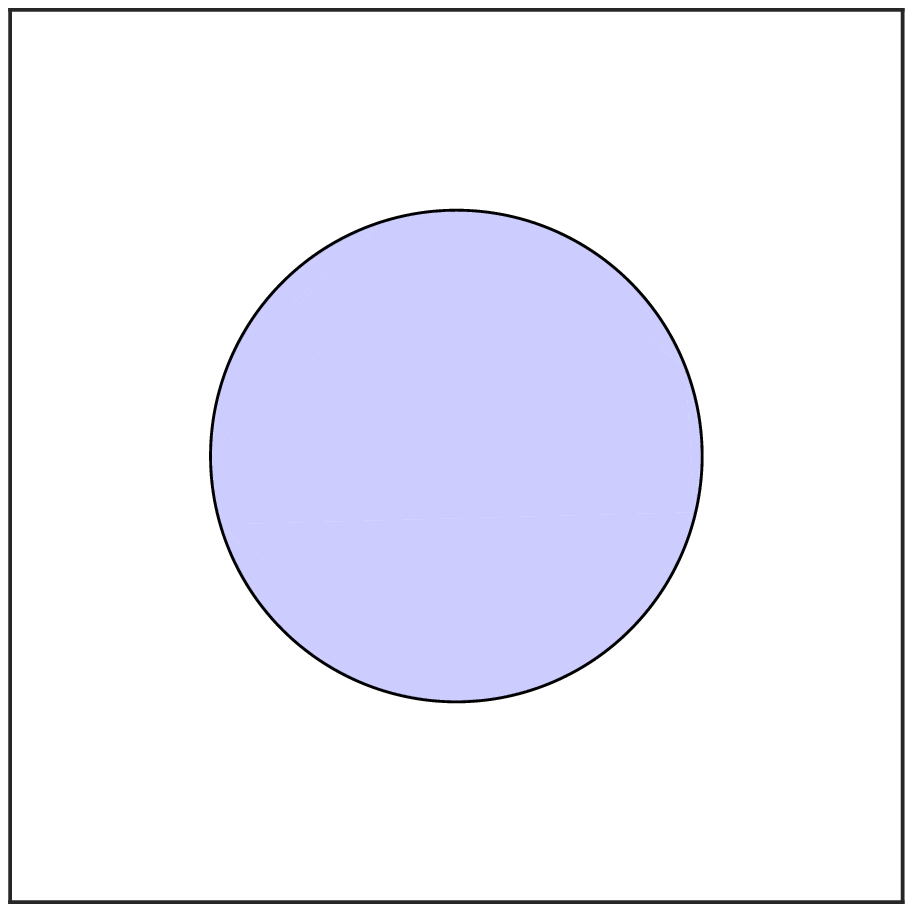} &
			\epsfxsize=0.3\textwidth\epsffile{./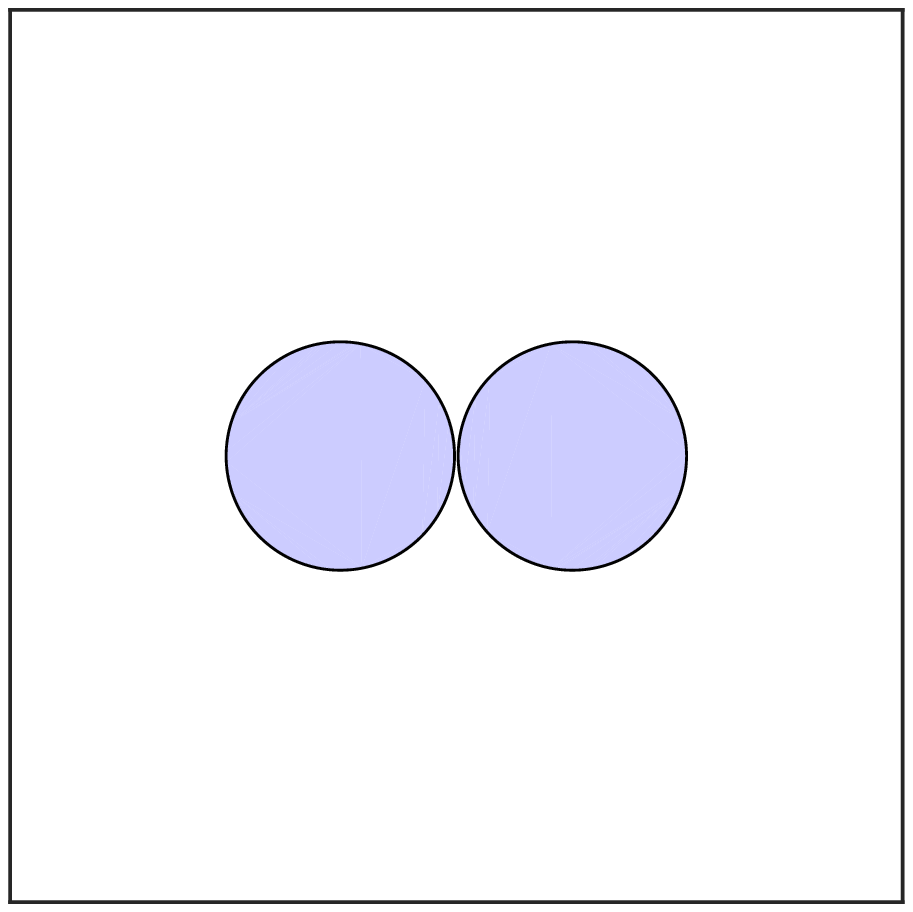} &
			\epsfxsize=0.3\textwidth\epsffile{./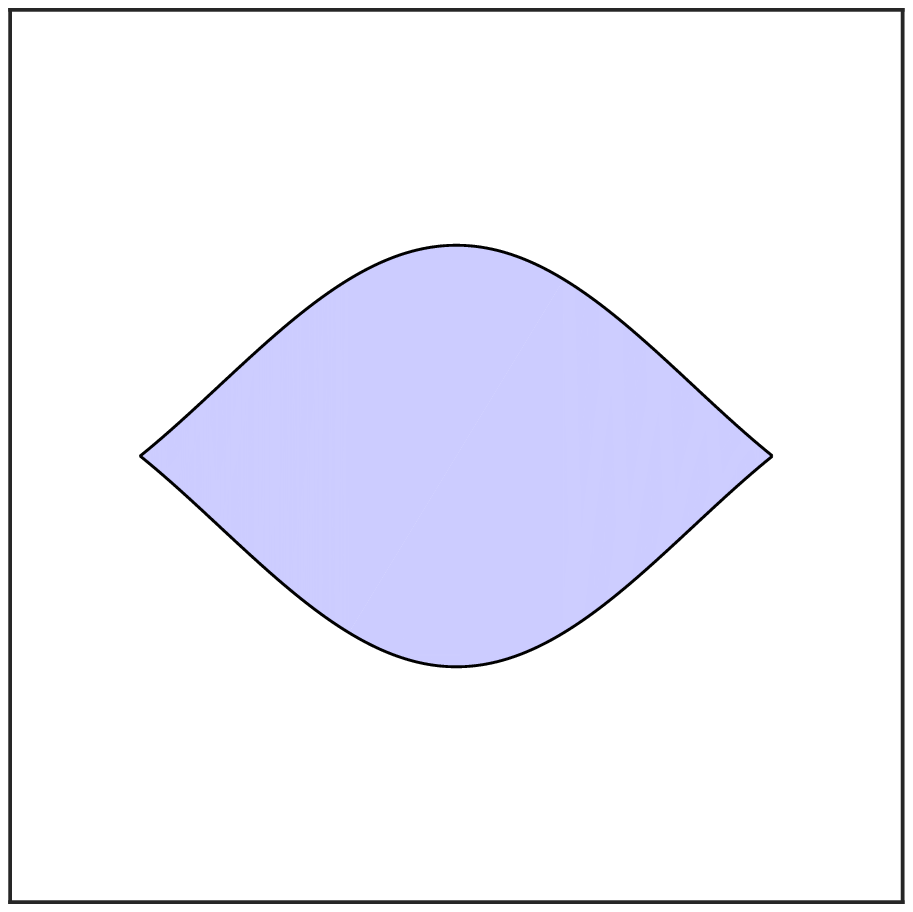}\\
			(a) & (b) & (c)
		\end{tabular}
	\end{center}
	\vspace{-0.5cm}
	\caption{The interface used in Example 1 (left), Example 2 (middle), and Example 3 (right).}\label{fig:5.0}
\end{figure}

\bigskip
{\bf Example 1.} We first consider a problem whose exact solution is known to illustrate the effectivity index of the a posteriori error estimate.
Let the interface $\Gamma$ be the circle centered at $(0,0)^T$ with radius $r=1.1$. We define $\Om_1=\{x\,:\,|x| < r\}$ and $\Om_2=\Om \backslash \bar{\Om}_1$, as shown in Figure \ref{fig:5.0} (a). Set $a_1= 10$ and $a_2 = 1$. The right-hand side $f$ and boundary condition $g$ are computed such that the exact solution is
\ben
u(x) =\begin{cases}
	e^{|x|^2-r^2} + 10r^2 - 1,\quad &\mbox{if }|x| \leq r, \\
	10|x|^2, \quad &\mbox{otherwise}.
\end{cases}
\een

Figure \ref{fig:5.3} depicts the surface plot of the exact solution and one discrete solution. Figure \ref{fig:5.1} shows the quasi-optimal decay $O(N^{-p/2})$ of both the error $\|u-U\|_{DG}$ and the a posterior error estimate $\eta$ for $p=1,2,3$, respectively. Effectivity indexes ${\rm eff}=\eta/\|u-U\|_{DG}$ for $p=1,2,3$ are evaluated in Figure \ref{fig:5.2}, which keep nearly constant as the number of degrees of freedom (\#DoFs) increases. 

In Table \ref{tab:5.0}, we display \#DoFs, $\eta$, and ${\rm eff}$ of uniform refinements and adaptive refinements. Figure \ref{fig:5.4} shows some examples of adaptive meshes and corresponding zoomed meshes.  It is clear that much less number of degrees of freedom are needed to reach nearly the same error when using higher order methods. We remark that using higher degree polynomials yields higher accuracy but requires more computational cost. Appropriate balance of these two factors in practical computations is an interesting question that requires further investigation. 

\begin{figure}[htb]
	\begin{center}
		\begin{tabular}{cc}
			\epsfxsize=0.45\textwidth\epsffile{./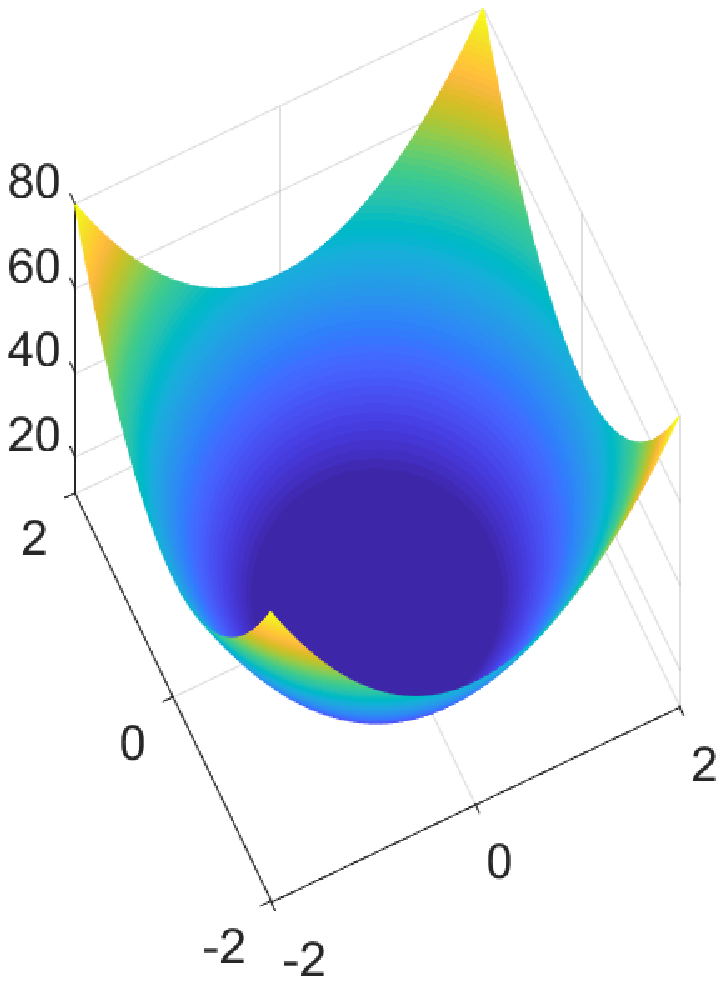} &
			\epsfxsize=0.45\textwidth\epsffile{./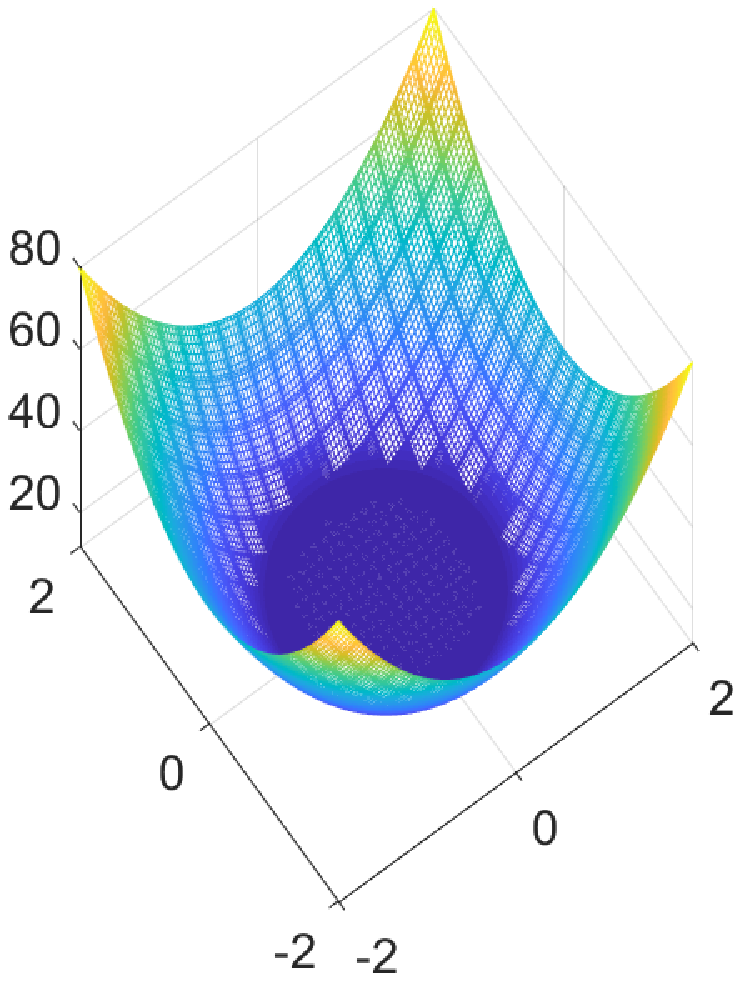}  \\
			(a) & (b) \\
		\end{tabular}
	\end{center}
	\vspace{-0.5cm}
	\caption{Example 1: (a) The exact solution. (b) The discrete solution on the mesh of $4184$ elements when $p=3$. }\label{fig:5.3}
\end{figure}

\begin{figure}[htb]
	\begin{center}
		\begin{tabular}{cc}
			\epsfxsize=0.45\textwidth\epsffile{./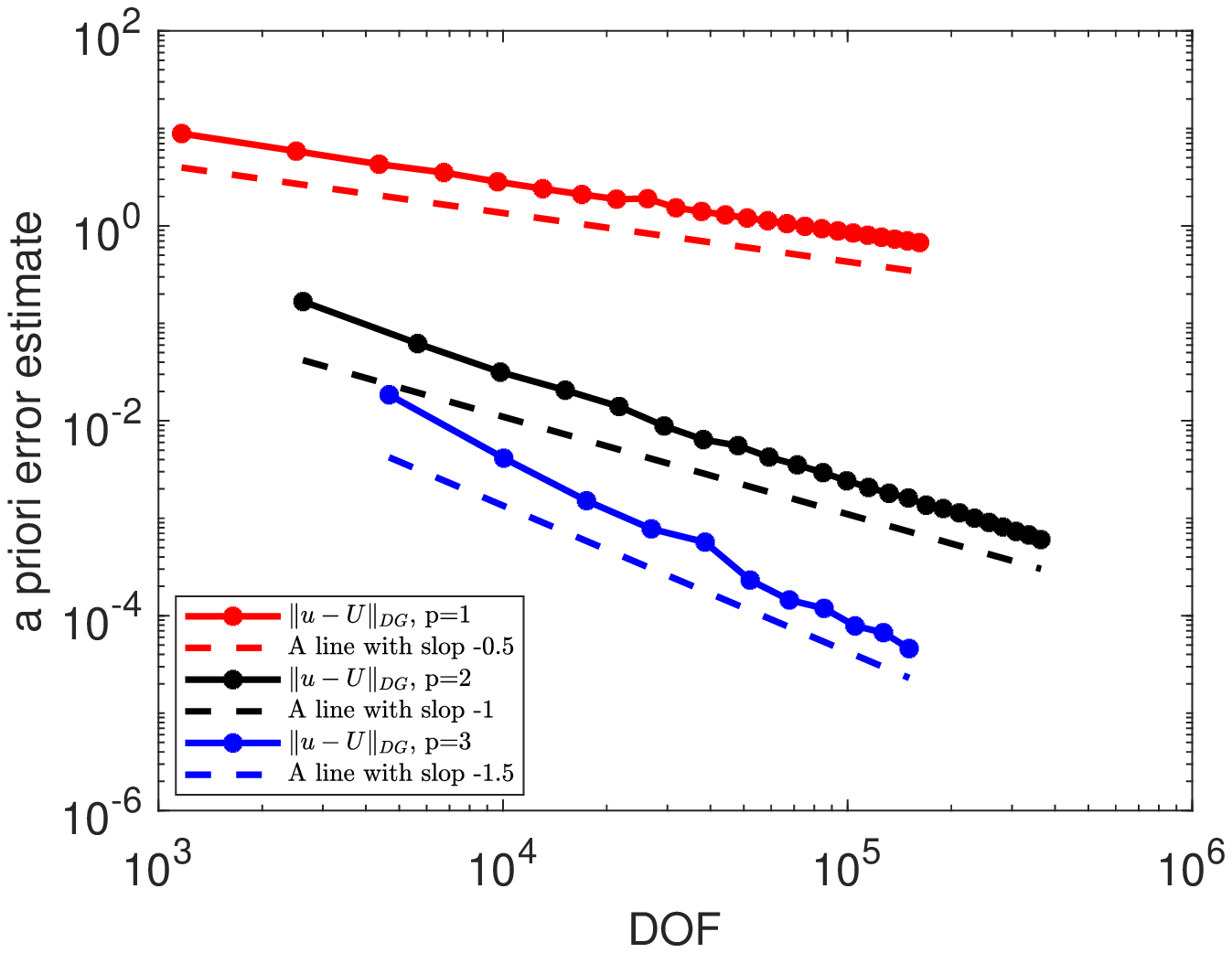} &
			\epsfxsize=0.45\textwidth\epsffile{./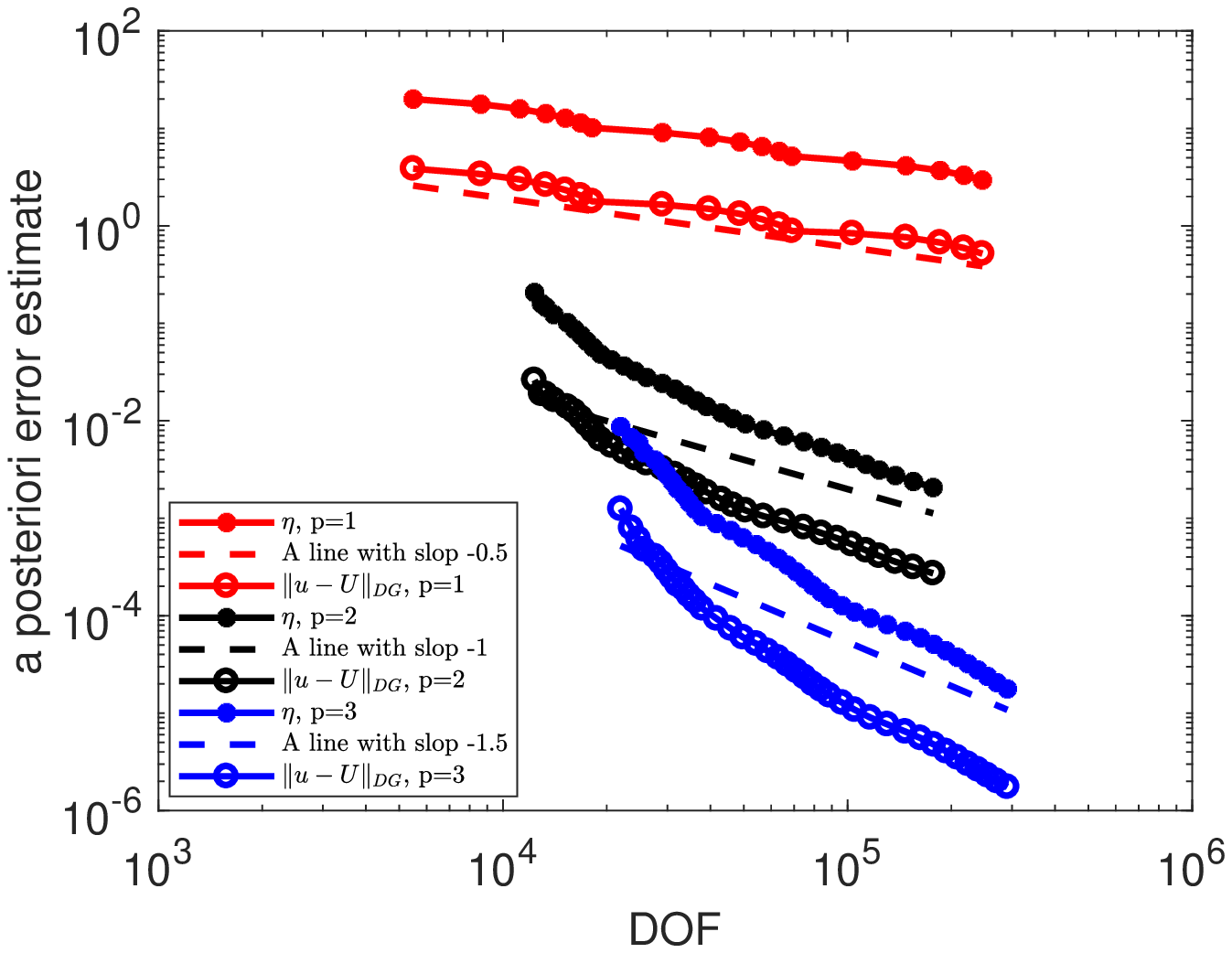}\\
			(a) & (b)
		\end{tabular}
	\end{center}
	\vspace{-0.5cm}
	\caption{Example 1: (a) The error $\|u-U\|_{\rm DG}$ for $p=1,2,3$ by uniform refinements. (b) A priori and a posterior error estimates $\eta$ for $p=1,2,3$ by adaptive refinements.}\label{fig:5.1}
\end{figure}
\begin{figure}[htb]
	\begin{center}
		\epsfxsize=0.5\textwidth\epsffile{./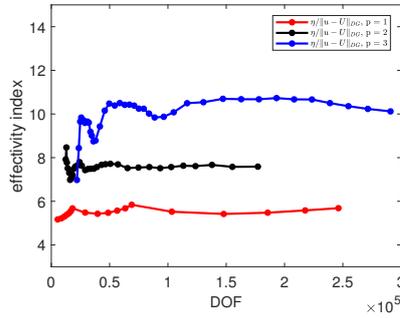}
	\end{center}
	\vspace{-0.5cm}
	\caption{Example 1: The effectivity index ${\rm eff}=\eta/\|u-U\|_{DG}$ against the degrees of freedom for $p=1,2,3$. }\label{fig:5.2}
\end{figure}
\begin{table}[htb]
	\caption{Comparison between uniform refinements and adaptive refinements.}\label{tab:5.0}
	\centering
	\begin{tabular}{cccccc}
		\hline
		\multicolumn{5}{c}{$p=1$} \\
		\hline
		Refinement Strategy &  \#DoFs & $\|u-U\|_{\rm DG}$ &  $\eta$ & eff \\
		\hline
		Uniform    &  103792  &  8.43e-1    & - &  - \\
		\hline
		Adaptive    & 103344  &  8.39e-1    & 4.63 &  5.52  \\
		\hline
	\end{tabular}
	\begin{tabular}{cccccc}
		\hline
		\multicolumn{5}{c}{$p=2$} \\
		\hline
		Refinement strategy &  \#DoFs &  $\|u-U\|_{\rm DG}$ &  $\eta$ & eff \\
		\hline
		Uniform    & 363852  &  6.04e-4    & - &  -  \\
		\hline
		Adaptive    & 93357  &  6.21e-4    & 4.67e-3 &  7.52  \\
		\hline
	\end{tabular}
	\begin{tabular}{cccccc}
		\hline
		\multicolumn{5}{c}{$p=3$} \\
		\hline
		Refinement strategy &  \#DoFs & $\|u-U\|_{\rm DG}$ &  $\eta$ & eff \\
		\hline
		Uniform    & 150848  &  4.60e-5    & - & -  \\
		\hline
		Adaptive    & 59704  &  4.32e-5    & 4.54e-4 &  10.50  \\
		\hline
	\end{tabular}	
\end{table}

\begin{figure}[htb]
	\begin{center}
		\begin{tabular}{ccc}
			\epsfxsize=0.5\textwidth\epsffile{./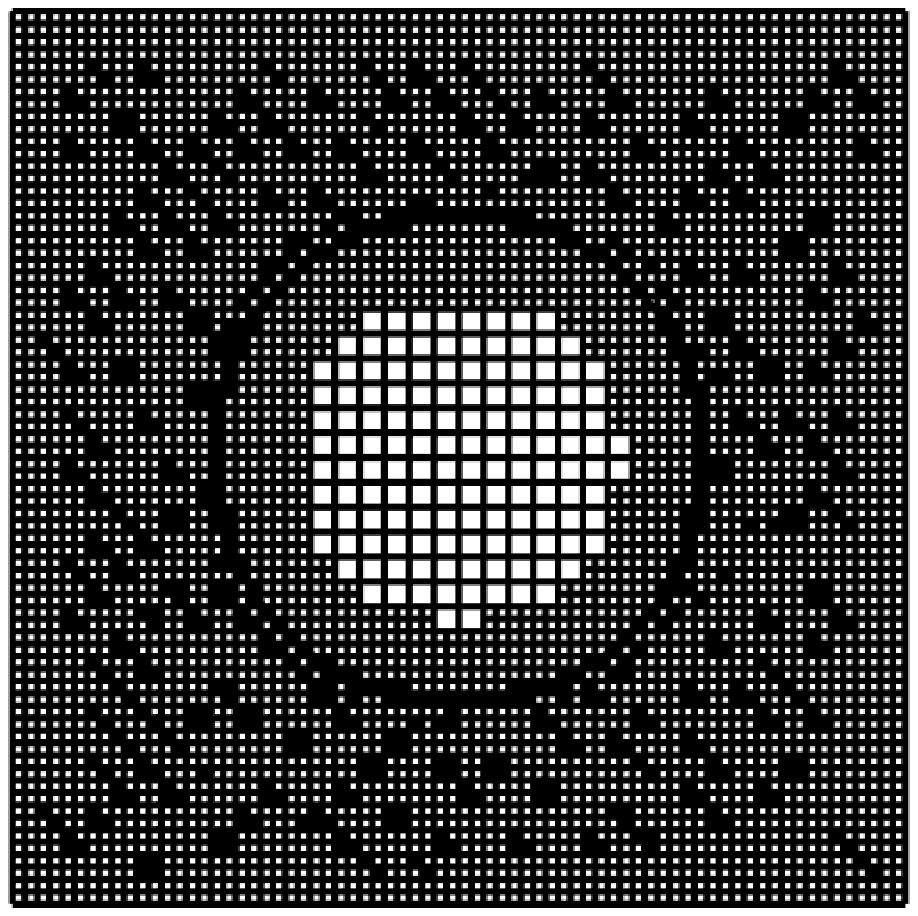} &
			\epsfxsize=0.5\textwidth\epsffile{./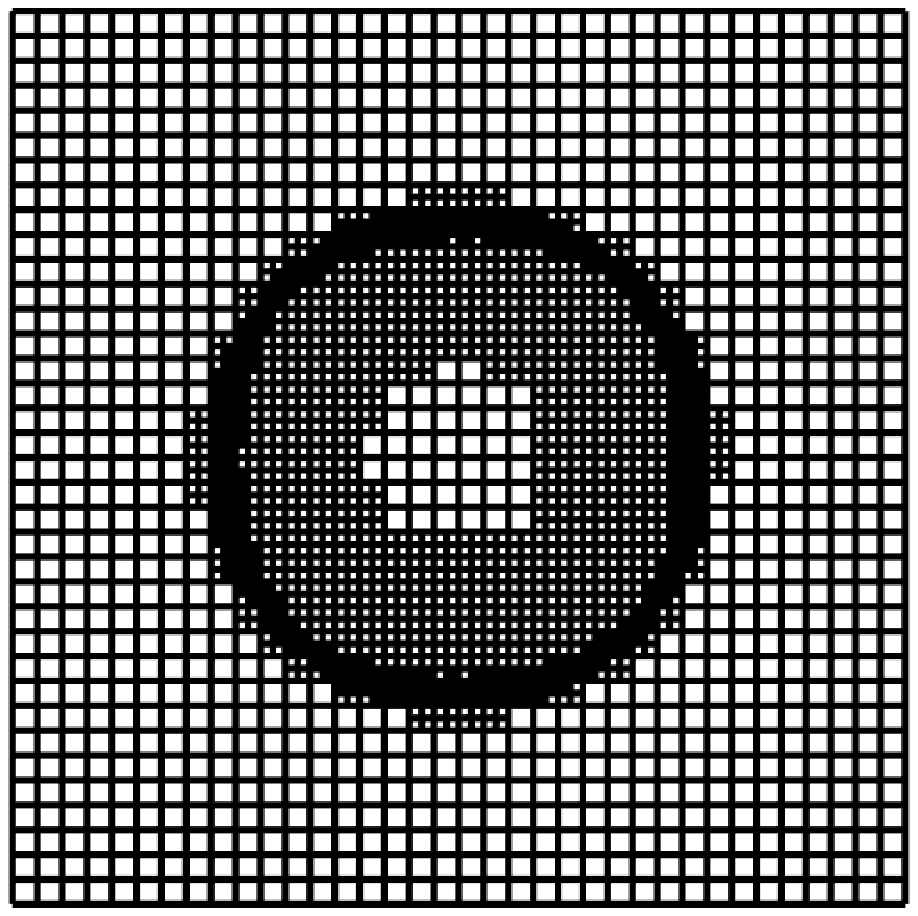} \\
			(a) & (b)\\
			\epsfxsize=0.5\textwidth\epsffile{./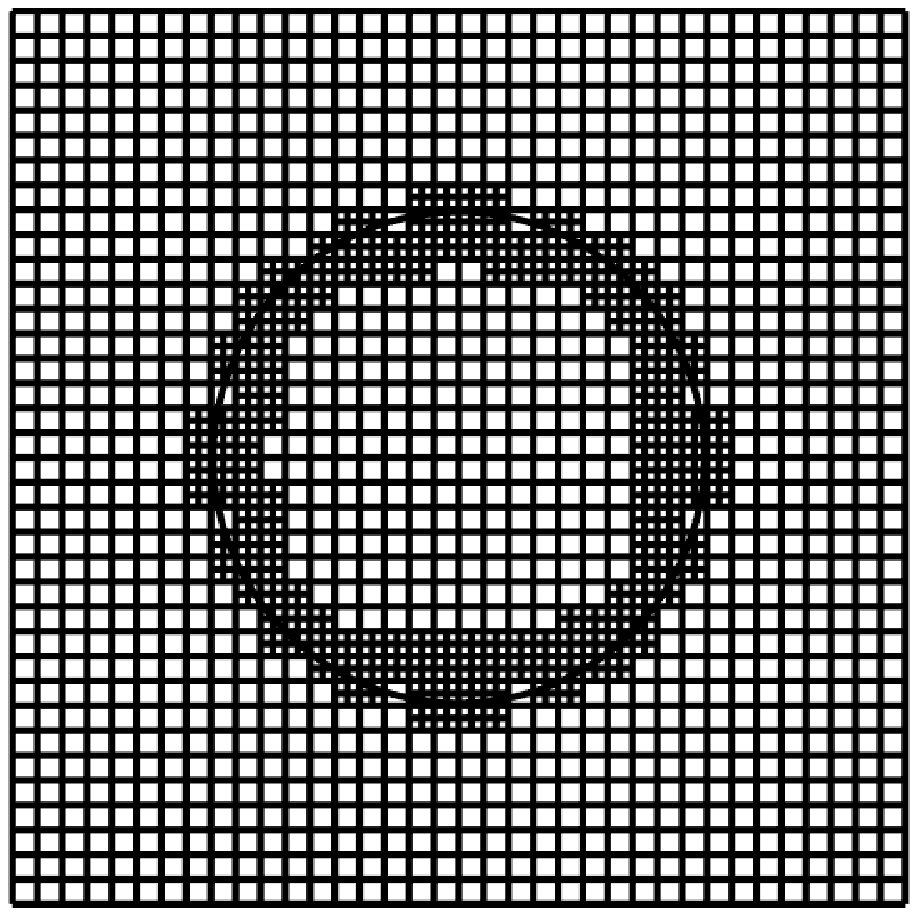} &
			\epsfxsize=0.5\textwidth\epsffile{./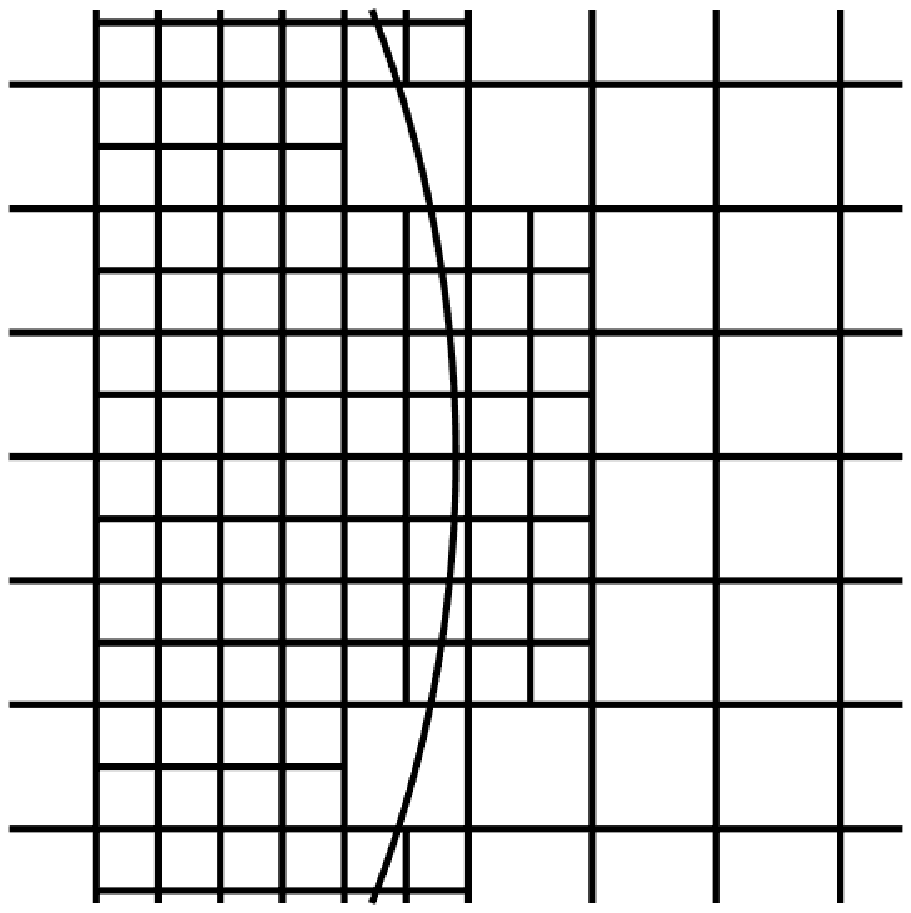}\\
			(c) & (d)\\
		\end{tabular}
	\end{center}
	\caption{Example 1: Adaptive meshes.
		(a) Mesh for $p=1$, \#DoFs=$29036$, $\|u-U\|_{\rm DG} = 1.6532$, and $\eta=9.0598$. (b) Mesh for $p=2$, \#DoFs=$31923$, $\|u-U\|_{\rm DG} = 2.7700e-3$, and  $\eta=2.0696e-2$.
		(c) Mesh for $p=3$, \#DoFs=$31088$, $\|u-U\|_{\rm DG} = 2.4554e-4$, and $\eta=2.3696e-3$. (d) The corresponding local mesh for $p=3$ within $(0.7,1.5)\times(-0.4,0.4)$.}\label{fig:5.4}
\end{figure}

{\bf Example 2.} In this example, we assume the interface $\Ga$ to be the union of two closely located circles of radius $r=0.51$. The distance between two circles is $d=0.02$. $\Om_{1}$ is the union of the interior of the two disks and $\Om_2=\Om \backslash \bar{\Om}_1$ (see Figure \ref{fig:5.0} (b)). To evaluate the effect of high contrast coefficients, we set $a_1= 100, a_2 = 1$. We set $f=1$ and $g=0$.

Although $a_1$ is fairly large, the quasi-optimal decay of the global a posteriori error estimate for $p=1,2,3$ is observed (Figure \ref{fig:5.5}). Figure \ref{fig:5.6} shows some examples of the adaptive meshes and the zoomed meshes. The discrete solution on the mesh of 2855 elements is shown in Figure \ref{fig:5.7} (a).

\begin{figure}[htb]
	\begin{center}
		\begin{tabular}{ccc}
			\epsfxsize=0.6\textwidth\epsffile{./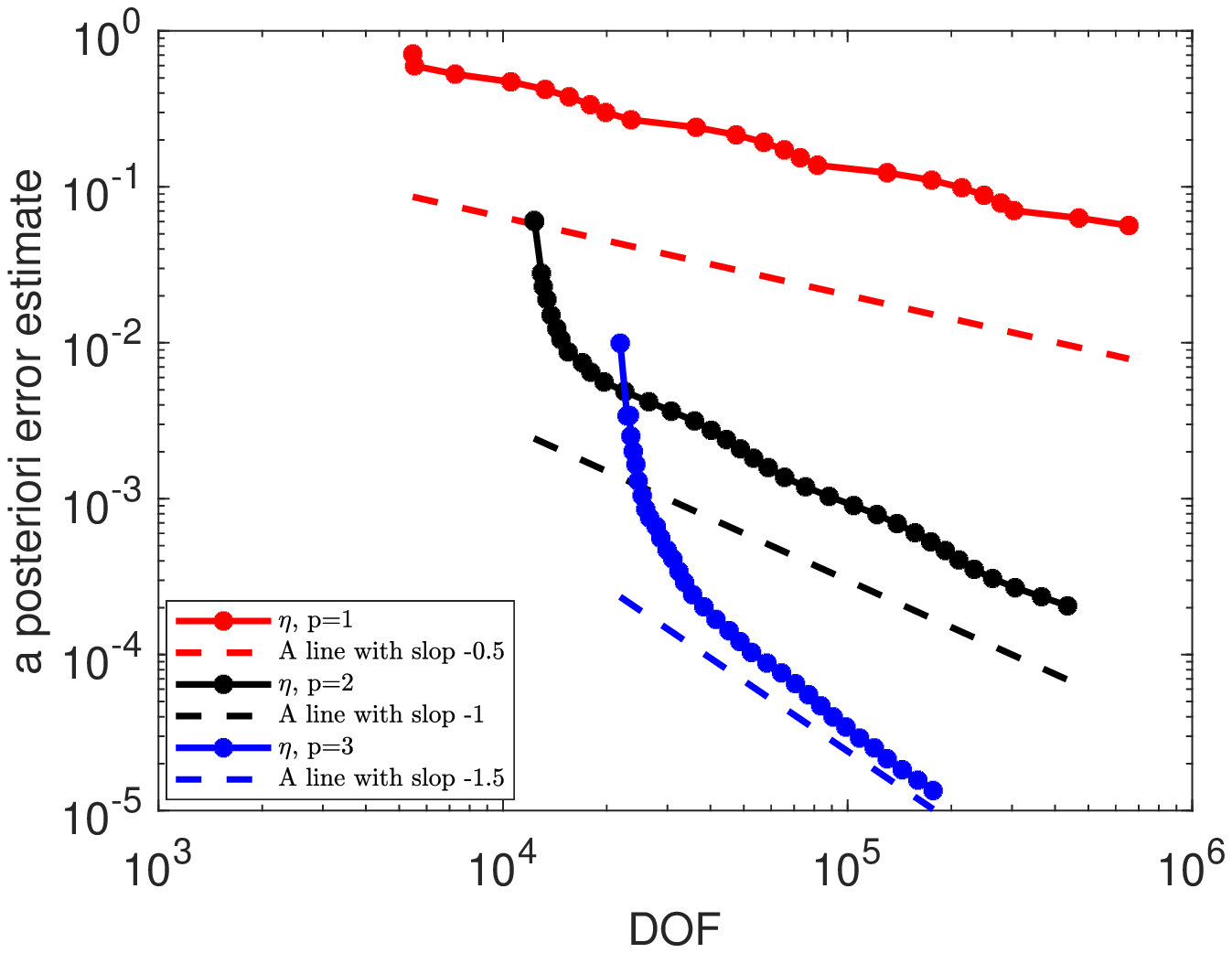}
		\end{tabular}
	\end{center}
	\vspace{-0.5cm}
	\caption{Example 2: The quasi-optimal decay of the a posteriori error estimate $\eta$ for $p=1,2,3$. }\label{fig:5.5}
\end{figure}

\begin{figure}[htb]
	\begin{center}
		\begin{tabular}{ccc}
			\epsfxsize=0.5\textwidth\epsffile{./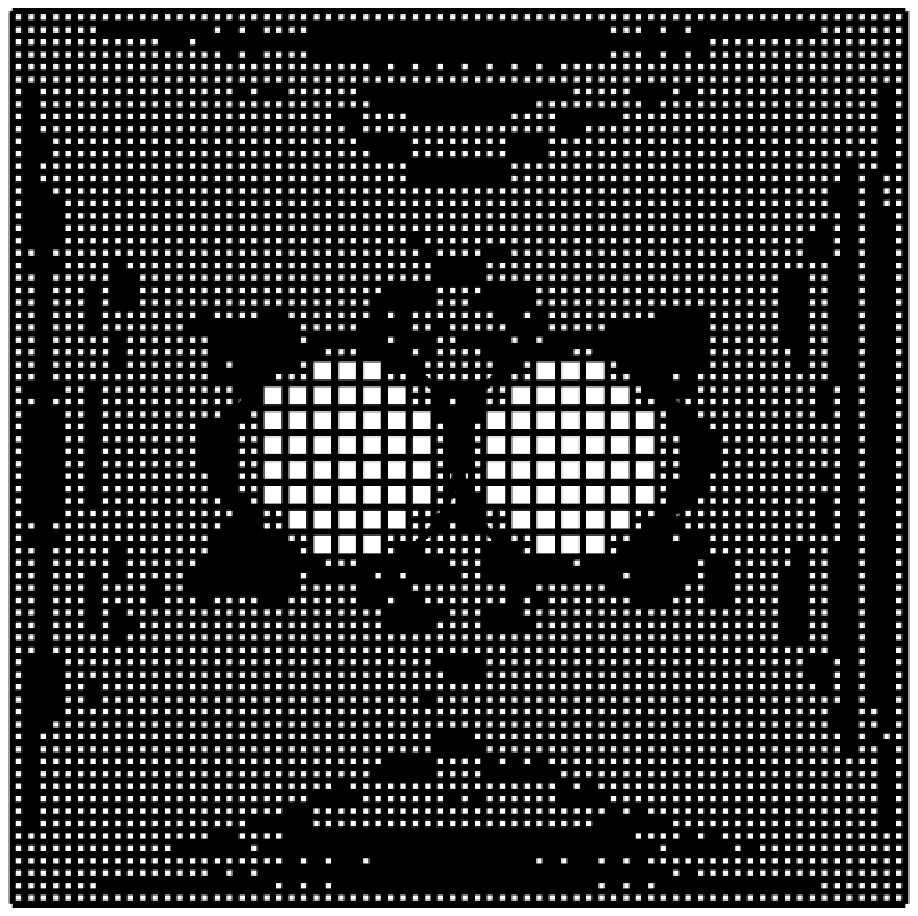} &
			\epsfxsize=0.5\textwidth\epsffile{./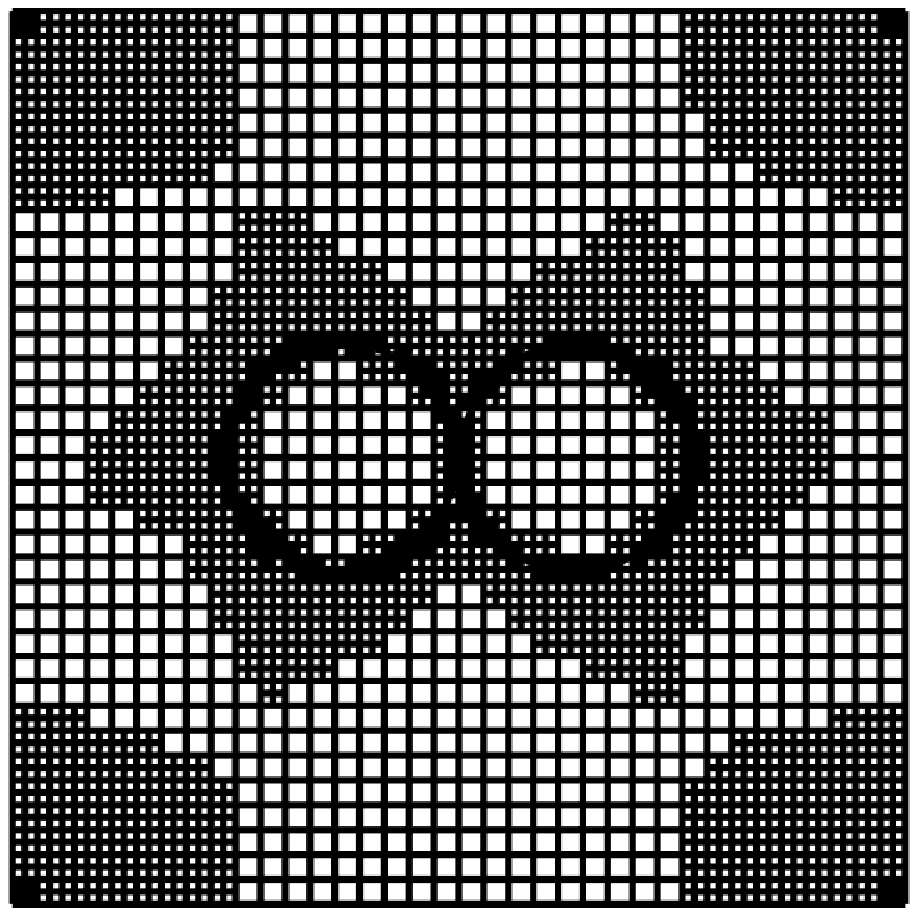} \\
			(a) & (b) \\
			\epsfxsize=0.5\textwidth\epsffile{./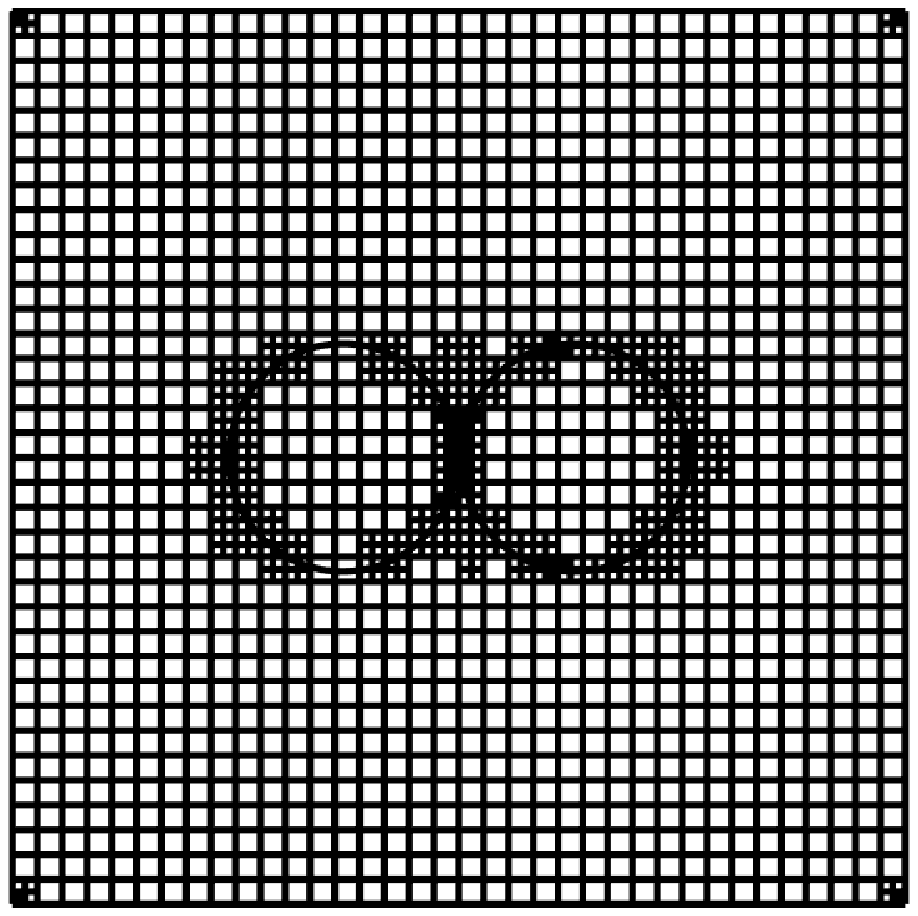}   &
			\epsfxsize=0.5\textwidth\epsffile{./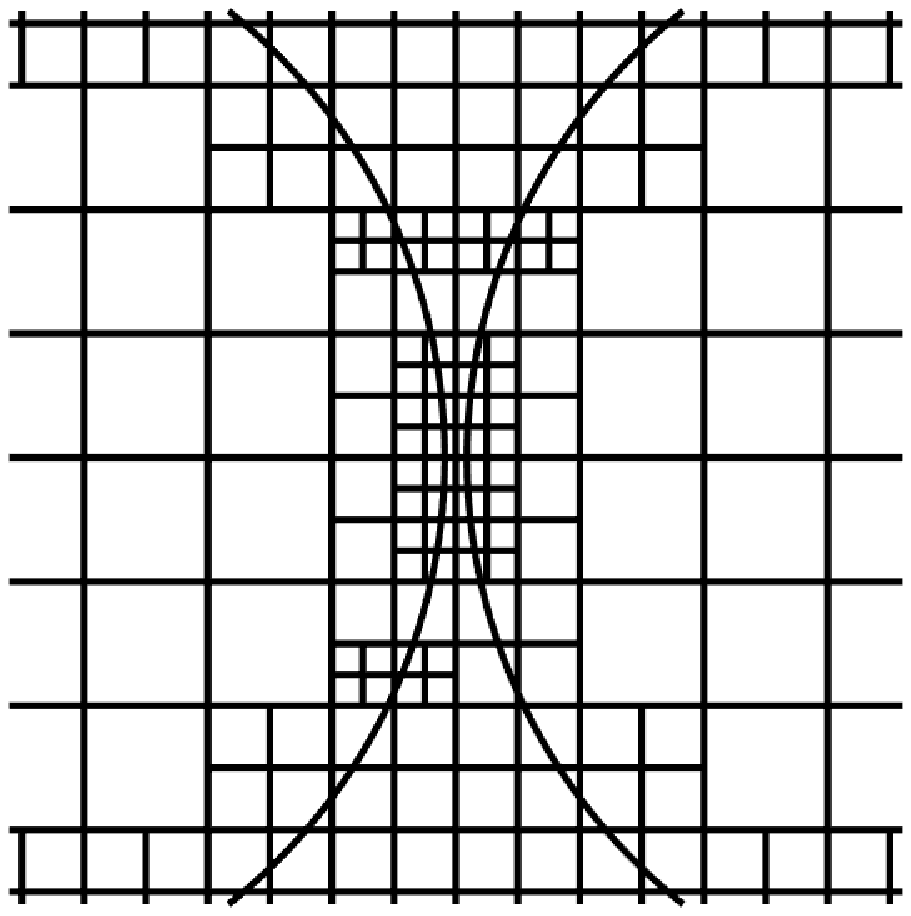}\\
			(c) & (d) \\
		\end{tabular}
	\end{center}
	\caption{Example 2: Adaptive meshes.
		(a) Mesh for $p=1$, \#DoFs=$35116$, and $\eta=2.4384e-1$.
		(b) Mesh for $p=2$, \#DoFs=$35235$, and $\eta=3.2069e-3$.
		(c) Mesh for $p=3$, \#DoFs=$30304$, and $\eta=4.6183e-4$. (d) The corresponding local mesh for $p=3$ within $(-0.4,0.4)\times(-0.4,0.4)$}\label{fig:5.6}
\end{figure}

\begin{figure}[htb]
	\begin{center}
		\begin{tabular}{cc}
			\epsfxsize=0.45\textwidth\epsffile{./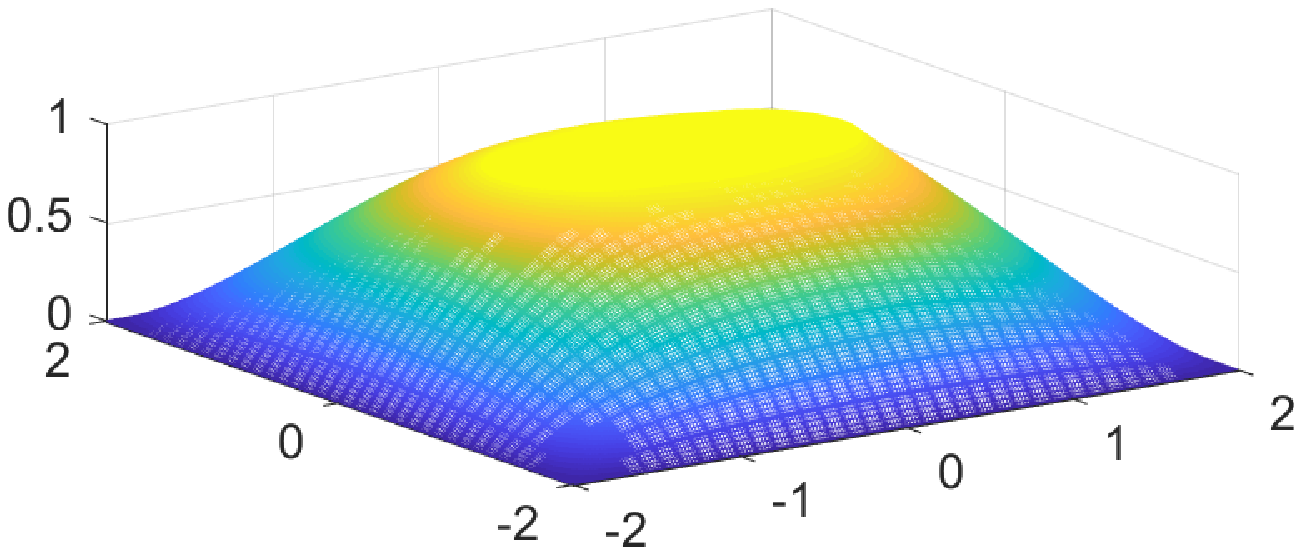} &
			\epsfxsize=0.45\textwidth\epsffile{./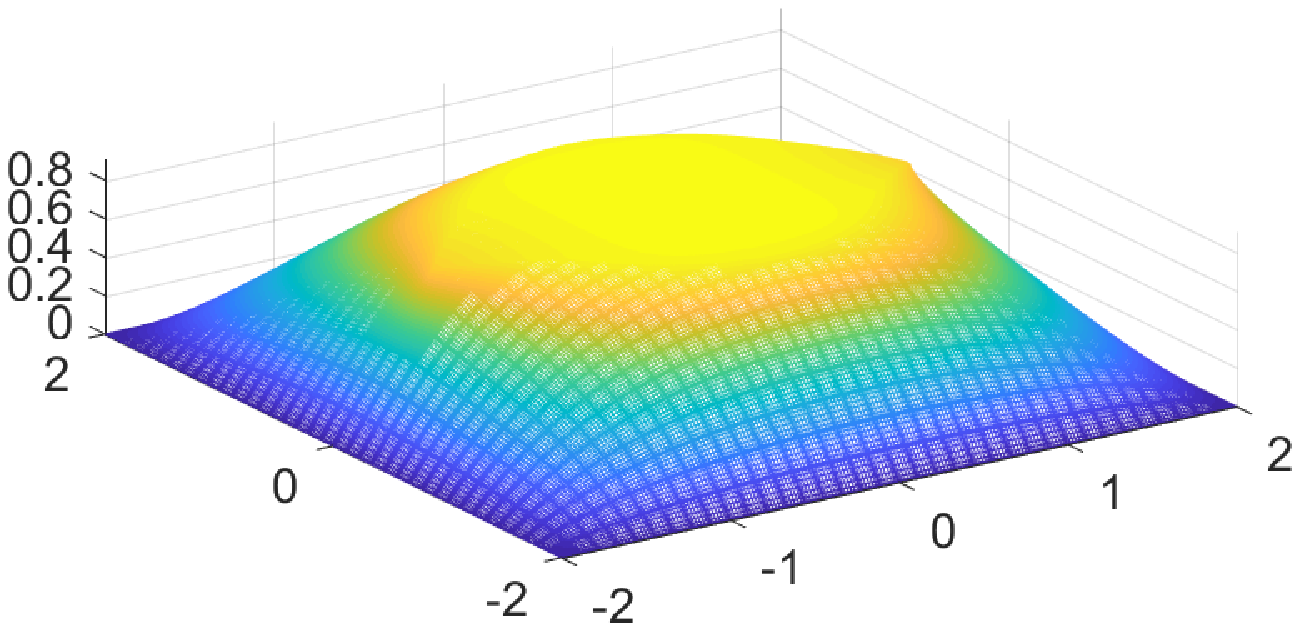}  \\
			(a) & (b) \\
		\end{tabular}
	\end{center}
	\vspace{-0.5cm}
	\caption{(a) Example 2: The discrete solution on the mesh of $2855$ elements for $p=3$. (b) Example 3: The discrete solution on the mesh of $2749$ elements for $p=3$.  }\label{fig:5.7}
\end{figure}

{\bf Example 3.} We consider a non-smooth interface defined by
\begin{equation*}
\Ga=\left\{(x,y):|y|=\frac{4\sqrt{2}}{9}\cos\left(\frac{\sqrt{2}\pi}{3}x\right)
+\frac{2\sqrt{2}}{9}\right\}.
\end{equation*}
Note that the interface is singular at the points $(\pm\sqrt{2},0)$ (see \ref{fig:5.0} (c)). We set $a_1= 10, a_2 = 1$, the right-hand side $f=1$ and boundary condition $g=0$.

The quasi-optimal decay of the a posteriori error estimate are clearly observed in Figure \ref{fig:5.8}. Figure \ref{fig:5.9} shows some examples of the adaptive meshes and parts of the zoomed meshes for $p=1,2,3$, respectively. We observe that the meshes are mainly refined around the sharp corners where the solution is singular. The discrete solution on the mesh of 2749 elements is depicted in Figure \ref{fig:5.7} (b).

\begin{figure}[htb]
	\begin{center}
		\begin{tabular}{ccc}
			\epsfxsize=0.6\textwidth\epsffile{./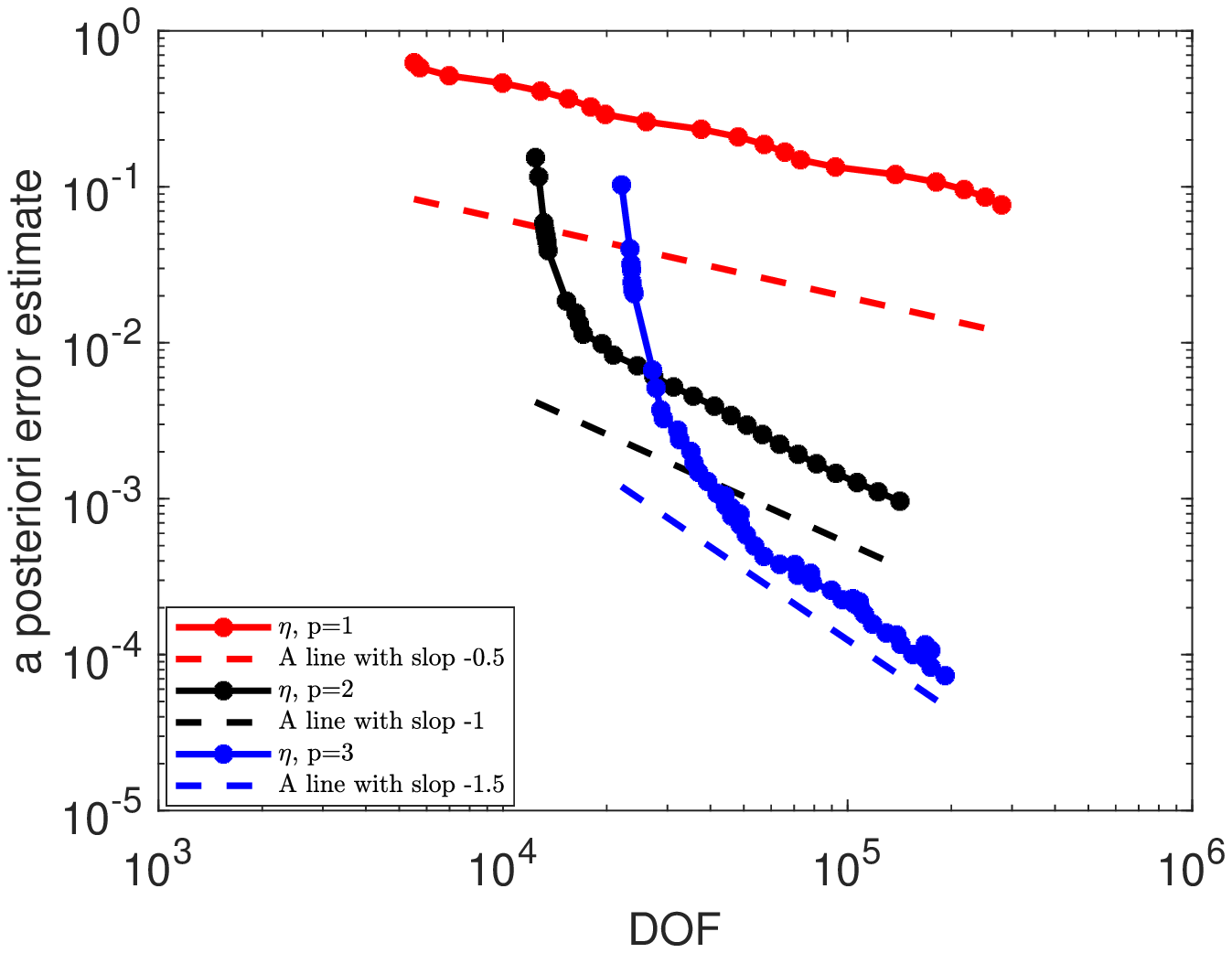}
		\end{tabular}
	\end{center}
	\vspace{-0.5cm}
	\caption{Example 3: The quasi-optimal decay of the a posteriori error estimate $\eta$ for $p=1,2,3$. }\label{fig:5.8}
\end{figure}


\begin{figure}[htb]
	\begin{center}
		\begin{tabular}{ccc}
			\epsfxsize=0.5\textwidth\epsffile{./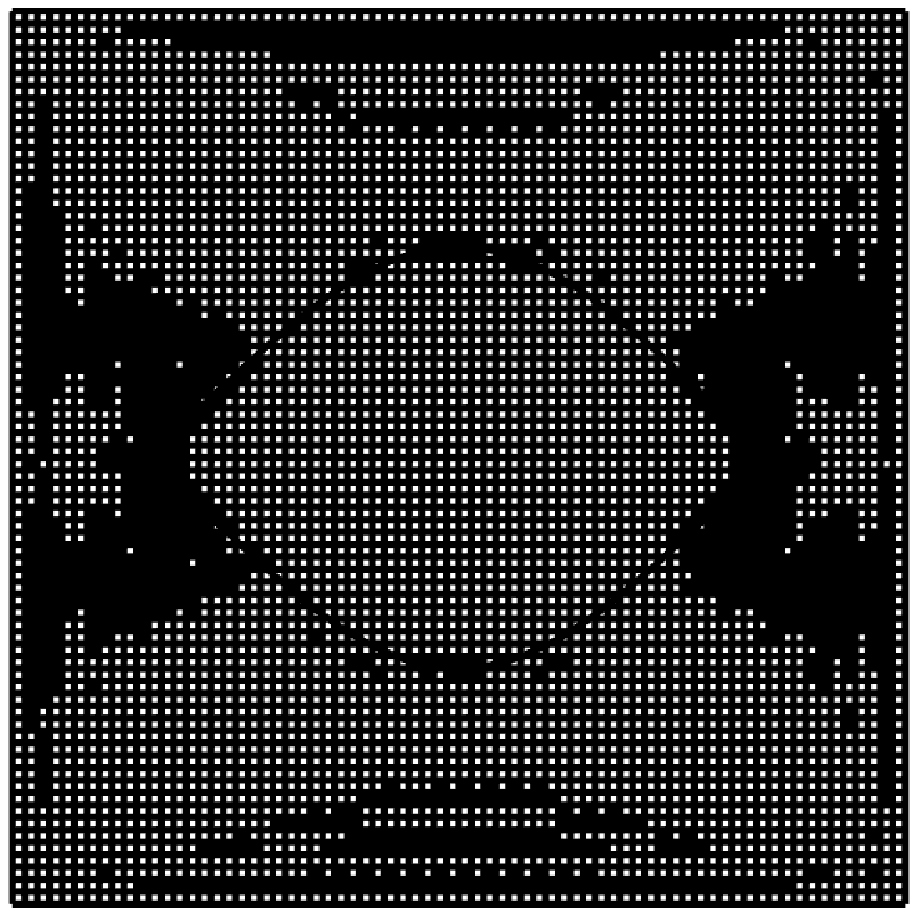} &
			\epsfxsize=0.5\textwidth\epsffile{./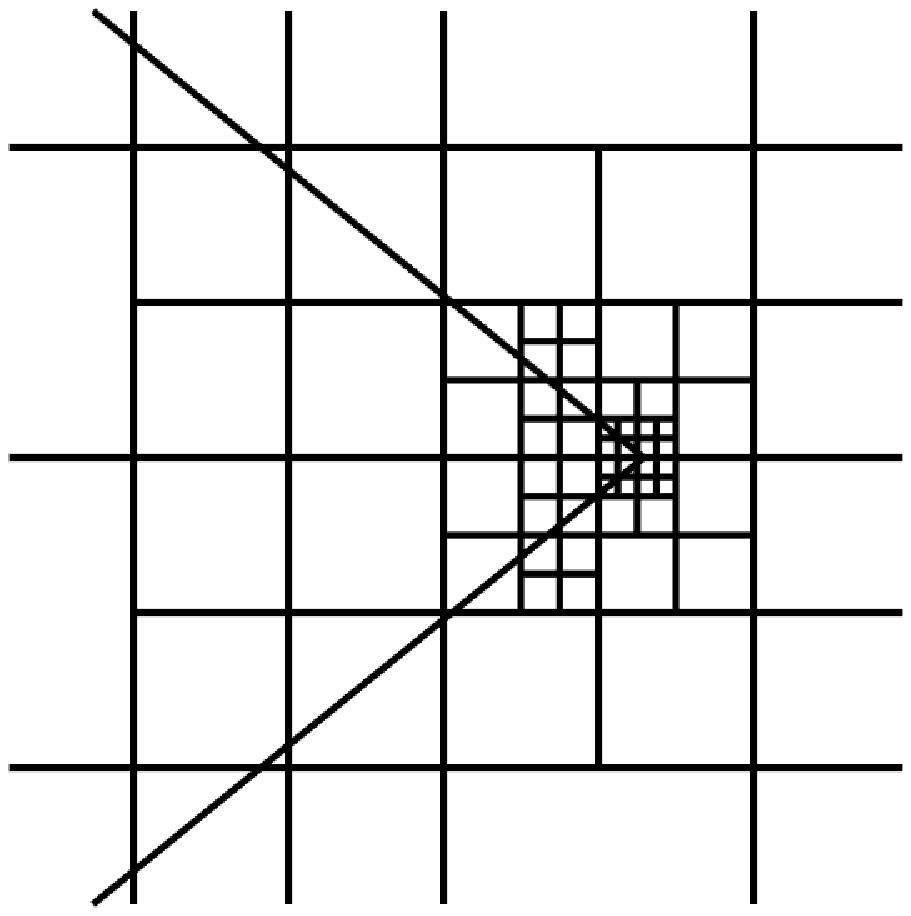}\\
			(a) & (b)\\
			\epsfxsize=0.5\textwidth\epsffile{./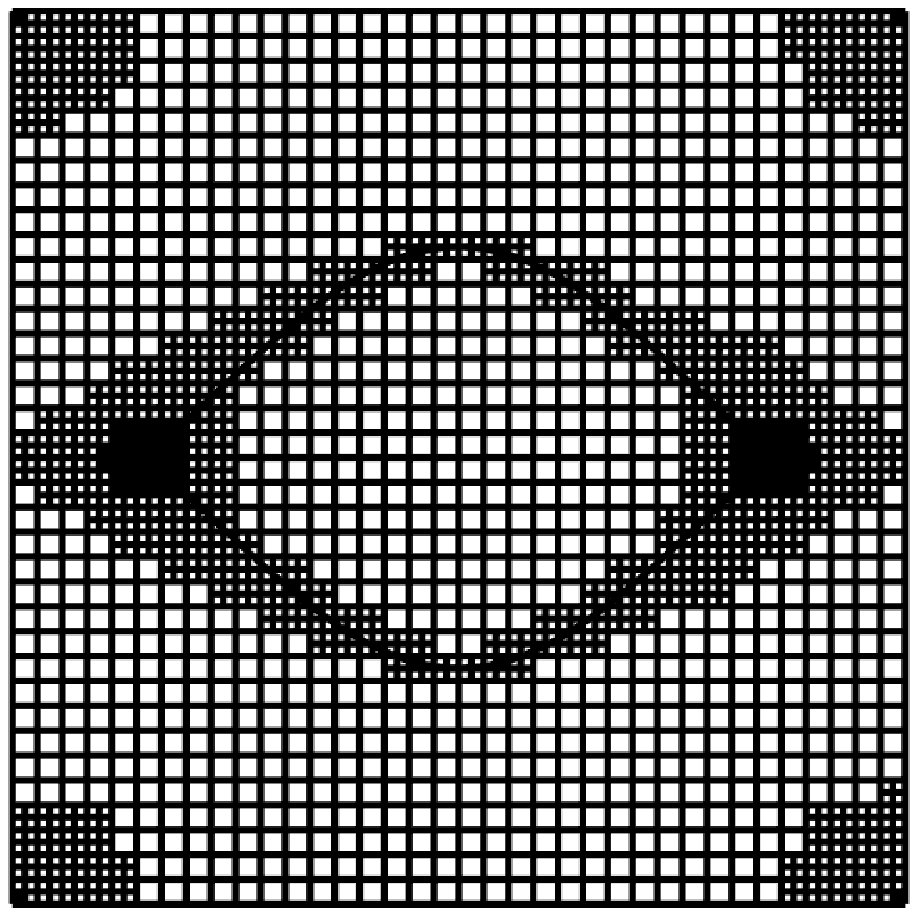} &
			\epsfxsize=0.5\textwidth\epsffile{./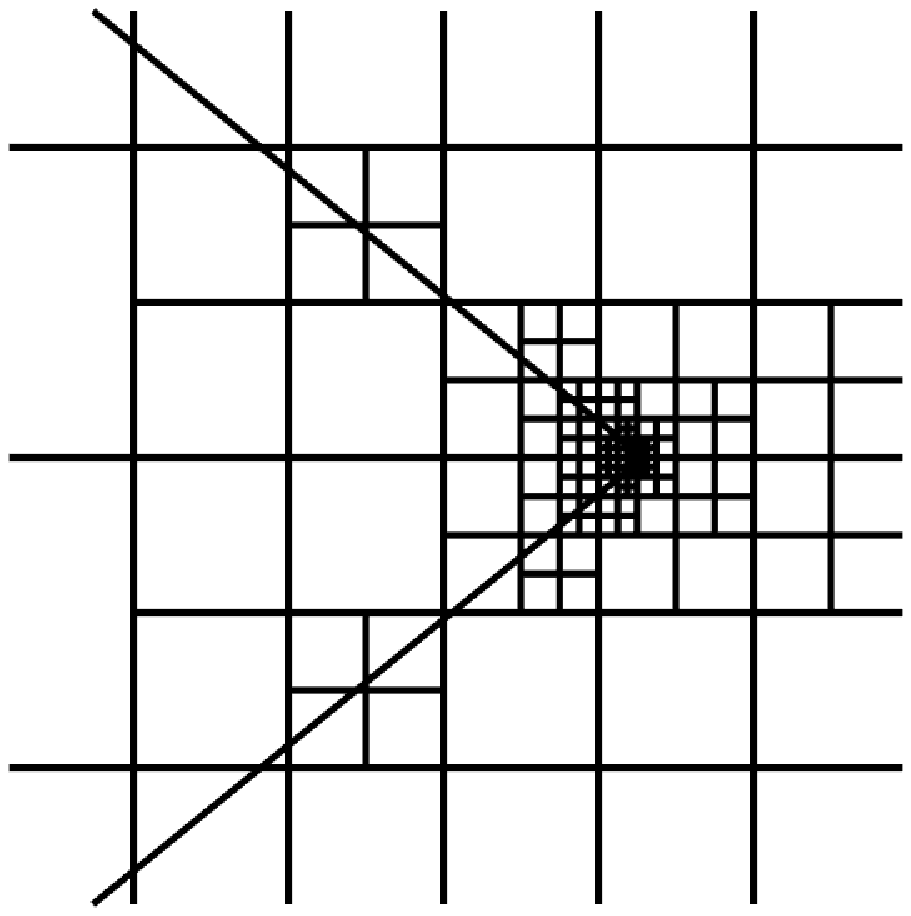}\\
			(c) & (d)\\
			\epsfxsize=0.5\textwidth\epsffile{./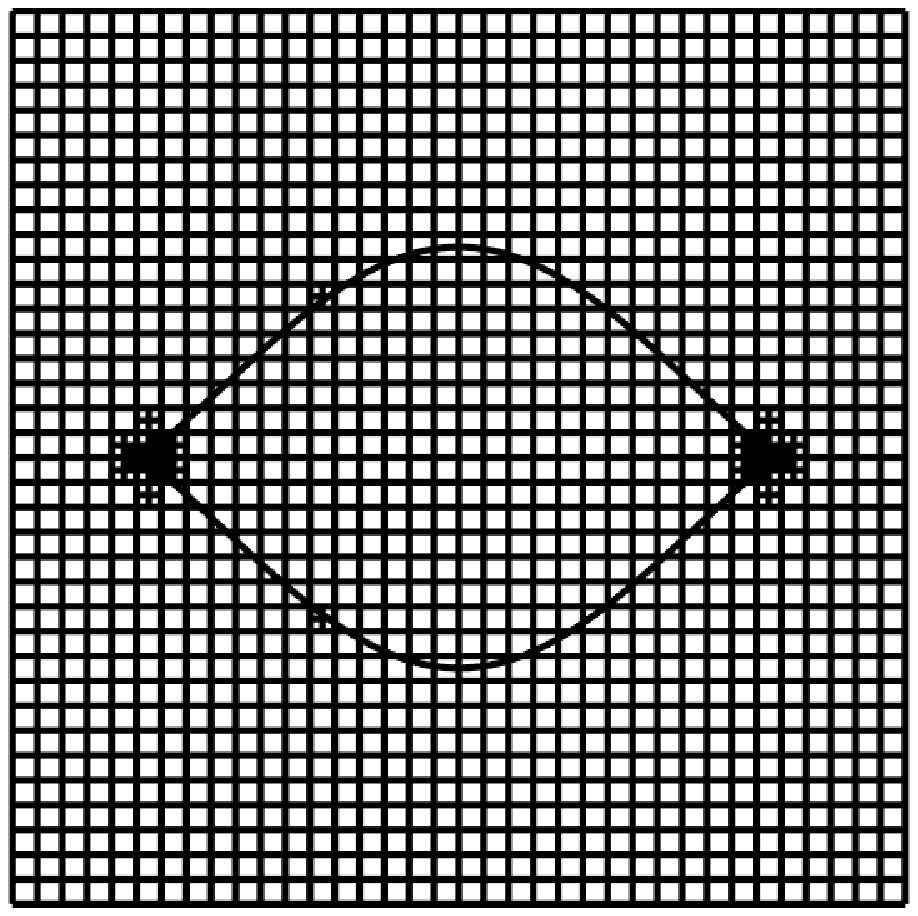}   &
			\epsfxsize=0.5\textwidth\epsffile{./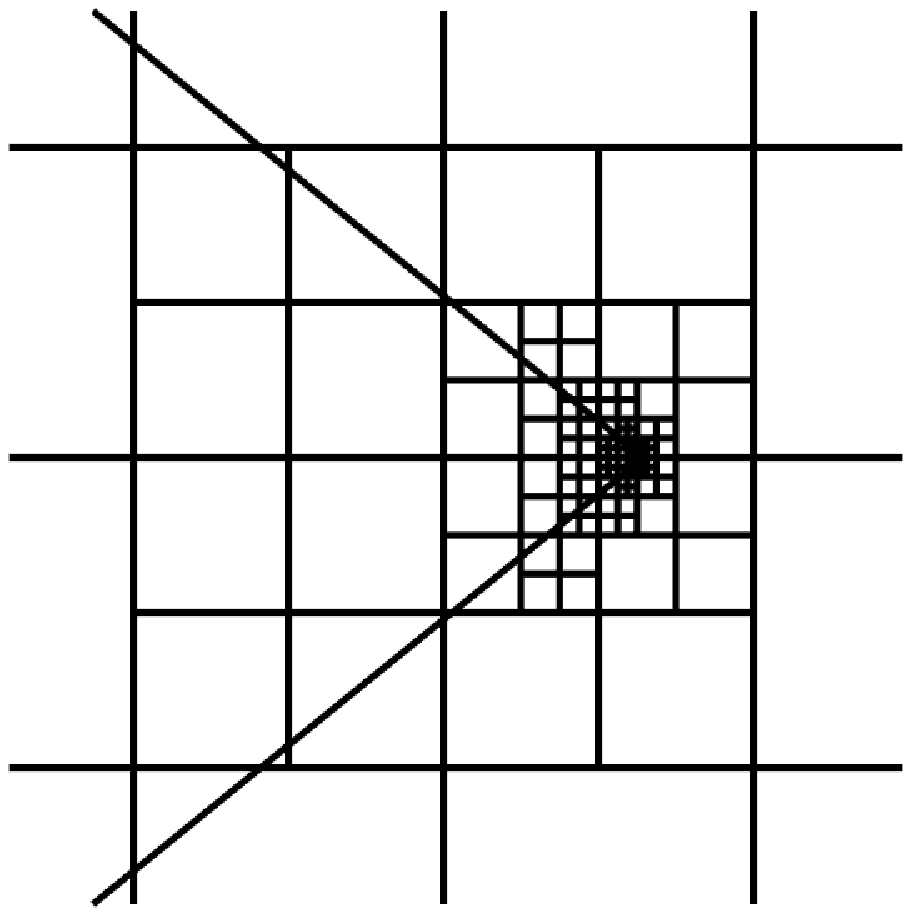}\\
			(e) & (f)
		\end{tabular}
	\end{center}
	\caption{Example 3: Adaptive meshes (left) and corresponding local meshes within $(1.40,1.42)\times(-0.01,0.01)$ (right).
		(a)\&(b) The case $p=1$, \#DoFs=$37684$, and $\eta=2.3391e-1$.
		(c)\&(d) The case $p=2$, \#DoFs=$31302$, and $\eta=5.2027e-3$.
		(e)\&(f) The case $p=3$, \#DoFs=$32128$, and $\eta=2.7504e-3$.  }\label{fig:5.9}
\end{figure}

\begin{acknowledgements}
The authors are very grateful to the referees for the constructive and helpful comments which lead to great improvement of the paper. The first author gratefully acknowledges the support and hospitality of the Program on ``Numerical Analysis
of Complex PDE Models in Sciences" in Erwin Schr\"odinger International Institute for Mathematics and Physics in Universit\"at Wien during June 25-29 and July 16-20, 2018. 
\end{acknowledgements}


\end{document}